\theoremstyle{plain}
\newtheorem{thm}{Theorem}[section]
\newtheorem{theorem}[thm]{Theorem}
\newtheorem{lemma}[thm]{Lemma}
\newtheorem{corollary}[thm]{Corollary}
\newtheorem{proposition}[thm]{Proposition}
\theoremstyle{remark}
\newtheorem{definition}[thm]{Definition}
\newtheorem{remark}[thm]{Remark}
\newtheorem{defn-thm}[thm]{Definition-Theorem}
\renewcommand{\bar}{\overline}
\renewcommand{\phi}{\varphi}
\newcommand{\C}{{\mathbb C}}
\newcommand{\T}{{\mathcal T}}
\renewcommand{\tilde}{\widetilde}
\newcommand{\g}{{\mathfrak g}}
\newcommand\independent{\protect\mathpalette{\protect\independenT}{\perp}}
\def\independenT#1#2{\mathrel{\rlap{$#1#2$}\mkern2mu{#1#2}}}
\def\sZ{\mathcal{Z}_{m}}
\def\Z{\mathcal{Z}_{m}}
\def\ZZ{\mathcal{Z}_{m}^{H}}
\def\P{\Phi}
\def\PP{\Phi_{m}^{H}}
\def\TT{\T_{m}^{H}}
\def\C{\mathbb{C}}
\def\T{\mathcal{T}}
\def\U{\mathcal {U}}
\title{Hodge metric completion of the moduli space of Calabi--Yau manifolds}
\author{Kefeng Liu and Yang Shen}
\begin{document}
\date{}

\maketitle

\vspace{-20pt}

\begin{abstract} 
In this paper, it is proved that the Hodge metric completion of
the moduli space of polarized and marked Calabi--Yau manifolds, i.e. the Torelli space, is
a complex affine manifold. 
As applications we prove that the period map from the Torelli
 space and the extended period map from its completion space, both are injective into the period domain, and that the completion space is a bounded domain of holomorphy with a complete K\"ahler-Einstein metric.
As a corollary we show that the period map from the moduli space of polarized Calabi--Yau manifolds
with level $m$ structure is also injective.
\end{abstract}

\parskip=5pt
\baselineskip=15pt





\setcounter{section}{-1}
\section{Introduction}
A compact projective manifold $M$ of complex dimension $n$ with $n\geq 3$ is called
Calabi--Yau in this paper, if it has a trivial canonical bundle and
satisfies $H^i(M,\mathcal{O}_M)=0$ for $0<i<n$.
We fix a lattice $\Lambda$ with an pairing $Q_{0}$, where $\Lambda$ is isomorphic to $H^n(M_{0},\mathbb{Z})/\text{Tor}$ for some fixed Calabi--Yau manifold $M_{0}$, and $Q_0$ is the intersection pairing.
A polarized and
marked Calabi--Yau manifold is a triple consisting of a Calabi--Yau
manifold $M$, an ample line bundle $L$ over $M$, and a marking $\gamma$ defined as an isometry of the lattices
$$\gamma :\, (\Lambda, Q_{0})\to (H^n(M,\mathbb{Z})/\text{Tor},Q).$$

Let $\mathcal{Z}_m$ be a smooth irreducible component of the moduli space of polarized Calabi--Yau manifolds with level $m$ structure with $m\geq 3$, which is constructed by Popp, Viehweg, and Szendr\"oi. For example, see Section 2 of \cite{sz}.  We define the Teichm\"uller space of Calabi--Yau manifolds to be the universal cover of $\mathcal{Z}_m$, which will be proved to be independent of the choice of $m$.
We will denote by $\T$ the Teichm\"uller space of Calabi--Yau manifolds.

Let $\mathcal{T}'$ be a smooth irreducible component of the moduli space of
equivalence classes of marked and polarized Calabi--Yau manifolds.  We call $\mathcal{T}'$ the Torelli space of Calabi--Yau manifolds in this paper.  We will assume that both $\mathcal{Z}_m$ and $\mathcal{T}'$ contain the polarized Calabi--Yau manifold that we start with. We will see that both the Teichm\"uller space $\T$ and the Torelli space $\T'$ are covering spaces of $\sZ$, therefore $\T$ is also the universal cover of $\T'$.  See Section \ref{TandPhi} for details. The Torelli space $\T'$ is also called the framed moduli as discussed in \cite{Beau}. We will see that the Torelli space $\T'$ is the most natural space to consider the period map and to study the Torelli problem.

We summarize the relations of these spaces in the following commutative diagram
\begin{equation}\label{inr cover1}
\xymatrix{
\T \ar[dr]^-{\pi} \ar[dd]^-{\pi_m} &\\
&\T'\ar[dl]^-{\pi_m'},\\
\sZ &}
\end{equation}
with $\pi_{m}$, $\pi'_{m}$ and $\pi$ the corresponding covering maps.
Therefore both the Teichm\"uller space $\T$ and the Torelli space $\T'$ are connected  complex manifolds with
$$\dim_{{\mathbb{C}}}{\mathcal{T}}=\dim_{{\mathbb{C}}}{\mathcal{T}'}=h^{n-1,1}(M)=N,$$
where $h^{n-1,1}(M)=\dim_{\mathbb{C}} H^{n-1,1}(M)$. There exist  universal families
$\mathcal{U}\rightarrow\mathcal{T}$ and $\mathcal{U}'\rightarrow\mathcal{T}'$ over the corresponding spaces as the pull-backs of the universal family $\mathcal{X}_{\mathcal{Z}_m}\rightarrow \mathcal{Z}_m$ as introduced in \cite{sz}. The markings give canonical identifications of the $n$-th cohomology groups of fibers of the universal family. We refer the reader to Section \ref{TandPhi} for details of the discussions about moduli, Torelli and Teichm\"uller spaces, as well as the period maps for Calabi--Yau manifolds.

Let $D$ be the period domain of polarized Hodge structures of
the $n$-th primitive cohomology of $M$.
Let us denote the period map on the smooth moduli space $\sZ$ by $$\Phi_{\mathcal{Z}_m}:\, \mathcal{Z}_{m}\rightarrow D/\Gamma,$$where
$\Gamma$ denotes the global monodromy group which acts
properly and discontinuously on $D$.
Then we can lift the period map $\Phi_{\mathcal{Z}_m}$ to the period map $\Phi:\, \T \to D$ from the universal cover $\T$ of $\sZ$, such that the following diagram commutes
$$\xymatrix{
\T \ar[r]^-{\Phi} \ar[d]^-{\pi_m} & D\ar[d]^-{\pi_{D}}\\
\sZ \ar[r]^-{\Phi_{\sZ}} & D/\Gamma.
}$$
Similarly we define the period map $$\Phi':\, \T'\to D$$ on the Torelli space $\T'$ by the definition of marking, such that the above diagram and diagram \eqref{inr cover1} fit into the following commutative diagram
\begin{equation}\label{inr cover2}\xymatrix{
\T \ar[dr]^-{\pi}\ar[rr]^-{\P} \ar[dd]^-{\pi_m} && D\ar[dd]^-{\pi_{D}}\\
&\T'\ar[ur]^-{\P'}\ar[dl]_-{\pi'_{m}}&\\
\Z \ar[rr]^-{\Phi_{\Z}} && D/\Gamma.
}
\end{equation}

There is a natural metric, called Hodge metric $h$, on $D$, which is a complete homogeneous metric induced from the Killing form as studied in detail in \cite{GS}.  By local Torelli theorem of Calabi--Yau manifolds, both $\Phi_{\mathcal{Z}_m}$ and $\Phi$ are nondegenerate, and it is shown in \cite{Lu} that the pull-backs of $h$ on $\mathcal{Z}_m$ and $\mathcal{T}$ are both well-defined K\"ahler metrics. Clearly $\P'$ is also nondegenerate, and the pull-back of the Hodge metric to $\T'$ by $\P'$ also defines a K\"ahler metric on the Torelli space $\T'$. These K\"ahler metrics are still called the Hodge metrics in this paper.

One of our crucial constructions is the global holomorphic affine structures on the Teichm\"uller space and the Hodge metric completion space of the Torelli space. Here we first outline the construction of affine structure on the Teichm\"uller space to give the reader some basic ideas of our method. See Section \ref{affine on T} for details.
We first fix a base point $p\in \mathcal{T}$ with its Hodge structure $\{H^{k,n-k}_p\}_{k=0}^n$ as the reference Hodge structure. With this fixed base point $\Phi(p)=o\in D$, we identify the unipotent subgroup $N_+=\text{exp}(\mathfrak{n}_{+})$ with its orbit in $\check{D}$  and define
$$\check{\mathcal{T}}=\Phi^{-1}(N_+\cap D)\subseteq \mathcal{T}.$$
We then prove that $\mathcal{T}\backslash\check{\mathcal{T}}$ is an analytic subvariety of $\T$, and the image of $\check{\T}$ under the period map $\P$ is bounded in $N_+$ which is identified to $\mathfrak{n}_+\simeq \text{T}_o^{1,0}D$ as a complex Euclidean space with induced metric from the Hodge metric on $D$.

The proof of the boundedness combines an argument of Harish-Chandra in proving that Hermitian symmetric spaces are bounded domains in complex Euclidean spaces,  and several fundamental properties of variation of Hodge structure from geometry and the geometry of period maps. By applying the Riemann extension theorem, we conclude that $$\Phi(\mathcal{T})\subseteq N_+\cap D$$ is a bounded subset in $N_+$. In this case we will simply say that the period map $\Phi$ is bounded. Here the Euclidean structure on $\mathfrak{n}_+$ and $N_+$ are induced from the Hodge metric on $D$ by the identification of $\mathfrak{n}_+$
to the holomorphic tangent space $T^{1,0}_oD$ of $D$ at the base point $o$. We also consider $N_+$ as a complex Euclidean space such that the exponential map $$\text{exp}:\, \mathfrak{n}_+\to N_+$$ is an isometry.

We then consider the abelian subalgebra $\mathfrak{a}\subset \mathfrak{n}_{+}$, which is defined by the image of the differential of the period map at the base point $p\in \T$, and $$A=\text{exp}(\mathfrak{a})\subset N_{+}$$ the corresponding abelian Lie group. We consider $\mathfrak{a}$  and $A$ as an Euclidean subspace of $\mathfrak{n}_+$ of $N_+$ respectively.
Denote the projection map by $$P:\, N_{+}\cap D\to A\cap D$$ and define $\Psi= P\circ \Phi$.

With local Torelli theorem for Calabi--Yau manifolds we then show that the holomorphic map
$$\Psi: \,{\mathcal{T}}\rightarrow A\cap D \subset A\simeq \C^{N}$$
 is nondegenerate on $\mathcal{T}$. Thus $\Psi: \,\mathcal{T}\rightarrow A\cap D$ induces a holomorphic affine structure on $\mathcal{T}$ by pulling back the affine structure on $\C^N$.



To proceed further, we must consider the Hodge metric completions of moduli spaces. More precisely, consider a smooth projective compactification $\bar{\mathcal{Z}}_{m}$ such that $\mathcal{Z}_m$ is Zariski open in $\bar{\mathcal{Z}}_{m}$ and the complement $\bar{\mathcal{Z}}_{m}\backslash\mathcal{Z}_m$ is a divisor of normal crossings. Let $\mathcal{Z}^H_{m}$ be the Hodge metric completion of the smooth moduli space $\mathcal{Z}_m$ inside $\bar{\mathcal{Z}}_{m}$, and let $\mathcal{T}^H_{m}$ be the universal cover of $\mathcal{Z}^H_{m}$ with the universal covering map $$\pi_{m}^H:\,\mathcal{T}^H_{m}\rightarrow \mathcal{Z}^H_{m}.$$ 

We will show that  $\mathcal{Z}^H_{m}$ can be identified to the Griffiths completion space of finite monodromy as introduced in Theorem 9.6 in \cite{Griffiths3}. In this paper, the space $\T^H_m$ will be called the Hodge metric completion space. In fact, we will prove that $\T^H_m$ is the completion space of the Torelli space with respect to the Hodge metric on it. Sometimes we simply call $\T^H_m$ the completion space for convenience. Lemma \ref{cidim} shows that $\mathcal{Z}^H_{m}$ is a connected and complete complex manifold, and thus $\mathcal{T}^H_{m}$ is a connected and simply connected complete complex manifold.

By using level structure and Serre's lemma, we can show that $\mathcal{Z}^H_{m}\setminus \Z$ consists of the points around which the so-called Picard-Lefschetz transformations are trivial for $m\geq 3$. Therefore we also have the following commutative diagram by choosing a lifting map $i_m$,
\begin{align}\label{introdiagram}
\xymatrix{\mathcal{T}\ar[r]^{i_{m}}\ar[d]^{\pi_m}&\mathcal{T}^H_{m}\ar[d]^{\pi_{m}^H}\ar[r]^{{\Phi}^{H}_{m}}&D\ar[d]^{\pi_D}\\
\mathcal{Z}_m\ar[r]^{i}&\mathcal{Z}^H_{m}\ar[r]^{{\Phi}_{{\mathcal{Z}_m}}^H}&D/\Gamma,
}
\end{align}
where $\Phi^H_{{\mathcal{Z}_m}}$ is the continuous extension of the period map $$\Phi_{\mathcal{Z}_m}:\,\mathcal{Z}_m\rightarrow D/\Gamma$$ and $i$ is the inclusion map. Also $i_m$ is a lifting of $i\circ \pi_m$,  $\Phi^H_{m}$ is a lifting of $\Phi^H_{{\mathcal{Z}_m}}\circ \pi_{m}^H$. We prove in Lemma \ref{choice} that there is a suitable choice of $i_m$ and $\Phi^H_{m}$ such that $\Phi=\Phi^H_{m}\circ i_m$.

As a corollary of the boundedness of the period map $\P$, we know that $\Phi^H_{m}$ is actually a bounded holomorphic map from $\mathcal{T}^H_{m}$ to $N_+\cap D$. Our first result is the following theorem.
\begin{theorem}\label{defnofTH}
For any $m\geq 3$, the complete complex manifold $\mathcal{T}^H_{m}\simeq A\cap D$ is a complex affine manifold, which is a bounded domain in $A\simeq \mathbb{C}^N$. Moreover, the holomorphic map $$\Phi_{m}^H:\, \mathcal{T}^H_{m}\rightarrow  N_+\cap D$$ is an injection. As a consequence, the complex manifolds $\mathcal{T}^H_{m}$ and $\mathcal{T}^H_{{m'}}$ are biholomorphic to each other for any $m, m'\geq 3$.
\end{theorem}
This theorem allows us to get rid of the subscript  $m$, and define the complete complex manifold $\mathcal{T}^H$ with respect to the Hodge metric by $\mathcal{T}^H=\mathcal{T}^H_{m}$, the holomorphic map $i_{\mathcal{T}}: \,\mathcal{T}\to \mathcal{T}^H$ by $i_{\mathcal{T}}=i_m$, and the extended period map $$\Phi^H:\, \mathcal{T}^H\rightarrow D$$ by $\Phi^H=\Phi^H_{m}$ for any $m\geq 3$.
Then diagram \eqref{introdiagram} becomes
\begin{align}\label{introdiagram'}
\xymatrix{\mathcal{T}\ar[r]^{i_\T}\ar[d]^{\pi_m}&\mathcal{T}^H\ar[d]^{\pi_{m}^H}\ar[r]^{{\Phi}^{H}}&D\ar[d]^{\pi_D}\\
\mathcal{Z}_m\ar[r]^{i}&\mathcal{Z}^H_{m}\ar[r]^{{\Phi}_{{\mathcal{Z}_m}}^H}&D/\Gamma.
}
\end{align}
By these definitions, Theorem \ref{defnofTH} implies that $\mathcal{T}^H$ is a complex affine manifold and that $$\Phi^H:\,\mathcal{T}^H\to N_+\cap D$$ is a holomorphic injection.

It is easy to show that the map $i_\T=i_m$ is a covering map onto its image. Then we prove that the Torelli space  $\T'$ is biholomorphic to the image
$\T_0=i_{\T}(\T)$ of $i_{\T}$ in $\T^{H}$. From this we define an injective map $$\pi^{0}:\, \T'\to \T^{H}$$ such that diagram \eqref{introdiagram'}
together with diagram \eqref{inr cover2} gives the following commutative diagram
\begin{equation}\label{intr main diagram}
\xymatrix{
\T \ar[dr]^-{\pi}\ar[rr]^-{i_{\T}} \ar[dd]^-{\pi_m} &&\T^{H}  \ar[dd]^-{\pi_{m}^{H}}\ar[rr]^-{\P^{H}}  &&D\ar[dd]^-{\pi_{D}}\\
&\T'\ar[ur]^-{\pi^{0}}\ar[dl]_-{\pi'_{m}}\ar[urrr]^-{\P'}&&&\\
\Z \ar[rr]^-{i} &&\ZZ \ar[rr]^-{\P_{\Z}^{H}} &&D/\Gamma .}
\end{equation}

With the injectivity of $\P^{H}$ and $\pi^{0}$, we have the global Torelli theorem on the Torelli space of Calabi--Yau manifolds.

\begin{theorem}\label{introcor2}
The global Torelli theorem holds on the Torelli space of Calabi--Yau manifolds, i.e.
the period map $$\P':\, \T'\to D$$ is injective. Moreover, the complete complex affine manifold $\mathcal{T}^H$ is the completion space of $\mathcal{T}'$ with respect to the Hodge metric, and it is a bounded domain in $A\simeq \mathbb{C}^N$.
\end{theorem}

For the above two theorems,  we remark that one technical difficulty of our arguments is to prove that $\mathcal{T}_m^H$ is independent of $m$,  and that the
image $\T_0=i_{\T}(\T)$ of $i_{\T}$ is biholomorphic to the Torelli space $\T'$. For this purpose we  introduce the smooth complete manifold $\mathcal{T}^H_{m}$
equipped with the Hodge metric. Moreover we prove the existence of an affine structure on $\TT$, which is given by pulling back the affine structure on $\C^N$
through the holomorphic map
$$\Psi_{m}^{H}:\, \TT \to A\cap D\subset A\simeq \C^{N},$$
where $\Psi_{m}^{H}=P\circ \PP$ is shown to be nondegenerate by using affine structures on $\TT$ and $A$, and the local Torelli theorem for Calabi--Yau manifolds.

Then by using the completeness of $\TT$ with the Hodge metric and Corollary 2 of Griffiths and Wolf in \cite{GW}, we show that $\Psi_{m}^{H}$ is a covering map. Finally,
we prove that $\Psi_{m}^{H}$ is injective by giving two proofs. The first proof is to directly prove that $A\cap D$ is simply connected. In fact we show that
$A\cap D$ is diffeomorphic to a Euclidean space as explicitly described at the beginning of the proof of Lemma \ref{abounded} following Harish-Chandra's proof.
The second proof is to use the affine structure and the completeness of $\TT$ with the Hodge metric, as well as the fact that $\Psi^{H}_{m}$ is an affine map, which is reduced to proving that the straight line segment in $\T^H_m$
connecting the base point to any other point must be mapped to the straight line segment connecting two different points in the Euclidean space $A\simeq \C^N$. From this we deduce that
for $m\geq 3$, $\T^H_m$ is biholomorphic to  $A\cap D$.

To show that $i_{\T}(\T)$ is biholomorphic to $\T'$, we first note that $$\mathcal{T}_0:=i_{\T}(\T)=i_m(\mathcal{T}),$$ for any $m\ge 3$, is a Zariski open submanifold of $\T^H_m=\T^H$. It is not hard to show that $i_{\mathcal{T}}: \,\mathcal{T}\to\mathcal{T}_0$ is a covering map. Moreover, we prove that $\T_{0}$ is also the covering space of the Torelli space $\T'$ with the covering map $\pi_{0}:\, \T_{0}\to \T'$, which fits into the commutative diagram
$$\xymatrix{
\T_{0}\ar[d]^-{\pi_{0}}\ar[rr]^-{\P^{H}|_{\T_{0}}}&& D\\
\T'\ar[urr]^-{\P'} &&.
}$$
Here the markings and level structures of the Calabi--Yau manifolds come into play substantially. Since $\P^{H}$ is injective, so is the restricted map $\P^{H}|_{\T_{0}}$. Therefore the covering map $\pi_{0}$ is also injective from the  commutative diagram above, and $\pi_{0}:\, \T_{0}\to \T'$ is a biholomoprhic map, which implies that $\P'$ is injective.

As a direct corollary, we have the global Torelli theorem on the moduli space $\Z$ of polarized Calabi--Yau manifolds with level $m$ structure for any $m\geq 3$.

\begin{corollary}\label{intr Global Torelli theorem'}
The global Torelli theorem holds on the moduli space $\Z$ of polarized Calabi--Yau manifolds with level $m$ structure, i.e. the period map $$\Phi_{\Z}:\, \Z \rightarrow D/\Gamma$$ is injective.
\end{corollary}

Moreover, we prove the following result about the geometry of the Hodge metric completion space $\T^H$.
\begin{theorem}The completion space $\mathcal{T}^H$ is a bounded domain of holomorphy in $\mathbb{C}^N$; thus there exists a complete K\"ahler--Einstein metric on $\mathcal{T}^H$.
\end{theorem}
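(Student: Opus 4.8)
The plan is to prove the two assertions separately: that $\mathcal{T}^H$, realized via the global coordinate map as a bounded domain $U\subseteq\mathbb{C}^N$, is a domain of holomorphy, and then that such a domain carries a complete Kähler--Einstein metric by the Cheng--Yau theory. By Theorem \ref{introcor2} the map $\tau^H=P\circ\Phi^H$ embeds $\mathcal{T}^H$ biholomorphically onto a bounded domain $U\subseteq\mathbb{C}^N$. For an open subset of $\mathbb{C}^N$, being a domain of holomorphy is equivalent, by the solution of the Levi problem, to being pseudoconvex, and for pseudoconvexity it is enough to exhibit a continuous plurisubharmonic exhaustion function; equivalently, it suffices to prove that $\mathcal{T}^H$ is Stein. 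Thus the task reduces to producing a (strictly) plurisubharmonic exhaustion function on $\mathcal{T}^H$.

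I would construct it from the Hodge metric $h$ together with its curvature. Since $\mathcal{T}^H$ is complete with respect to $h$, Hopf--Rinow shows that for a fixed base point $p$ the distance $\rho=d_h(p,\cdot)$ is proper, hence an exhaustion function; the issue is plurisubharmonicity. Here one uses that the extended period map $\Phi^H:\,\mathcal{T}^H\to D$ is a horizontal holomorphic immersion and that the Hodge metric on $\mathcal{T}^H$ is by definition the pullback under $\Phi^H$ of the Hodge metric of $D$. Along horizontal directions the Hodge metric on $D$ has holomorphic (bi)sectional curvature bounded above by a negative constant (Griffiths--Schmid, and the explicit computations of Lu and Lu--Sun in the Calabi--Yau setting), and by the Gauss equation the induced metric on $\mathcal{T}^H$ inherits this negative upper bound on its holomorphic sectional curvature. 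Using this curvature bound, the completeness of $h$, and the simple connectedness of $\mathcal{T}^H$, one obtains --- by a Hessian comparison and smoothing argument in the spirit of the Cartan--Hadamard estimate $\mathrm{Hess}(\rho^2)\ge 2h$ for non-positively curved metrics, combined with the Kähler identity that converts a non-negative real Hessian into $i\partial\bar\partial\ge 0$ --- a smooth strictly plurisubharmonic exhaustion $\psi$ on $\mathcal{T}^H$. By Grauert's characterization $\mathcal{T}^H$ is then Stein, so $U$ is a bounded domain of holomorphy in $\mathbb{C}^N$.

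The delicate point, which I expect to be the main obstacle, is exactly this last construction: the squared distance of a Kähler metric is plurisubharmonic once its real Hessian is positive semidefinite, and the latter comes from a sectional-curvature comparison, but for the Hodge metric only the holomorphic sectional or bisectional curvature is controlled a priori rather than the full Riemannian sectional curvature, so one must run the Hessian estimate using only the complex directions (which is what survives pairing with $i\partial\bar\partial$) or approximate $\rho$ suitably while preserving properness. A variant route that sidesteps the curvature estimate is to compare canonical metrics --- showing that the Bergman metric of $U$ dominates a constant multiple of $h$, so that $h$-completeness forces Bergman completeness of $U$ and hence, by the classical fact that Bergman-complete bounded domains are pseudoconvex, that $U$ is a domain of holomorphy. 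In any case, once $\mathcal{T}^H$ is known to be a bounded domain of holomorphy --- equivalently a bounded pseudoconvex domain --- in $\mathbb{C}^N$, the theorem of Cheng--Yau, together with its extension by Mok--Yau to arbitrary bounded pseudoconvex domains, yields a unique complete Kähler--Einstein metric on $\mathcal{T}^H$, normalized so that its Ricci curvature equals the negative of the metric. This completes the plan.
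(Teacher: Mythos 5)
Your overall reduction is the right one---realize $\mathcal{T}^H$ as a bounded domain via Theorem \ref{main theorem}, produce a continuous plurisubharmonic exhaustion function to get pseudoconvexity, and then invoke Mok--Yau for the complete K\"ahler--Einstein metric---but the central step, the actual construction of the exhaustion function, is not carried out, and the route you sketch for it does not close. You propose to take the Hodge distance $\rho=d_h(p,\cdot)$ (proper by completeness) and make it plurisubharmonic via a Cartan--Hadamard/Hessian comparison. That comparison requires control of the Riemannian sectional curvatures of planes containing the radial direction along minimizing geodesics; what Griffiths--Schmid give is only that the \emph{holomorphic} sectional curvatures in \emph{horizontal} directions are bounded above by a negative constant. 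The Hodge metric on $D$ is not nonpositively curved in general, and the curvature-decreasing property of submanifolds transfers only the holomorphic sectional curvature bound to $\Phi^H(\mathcal{T}^H)$, not a bound on all sectional curvatures. You flag this yourself as ``the main obstacle,'' but flagging it is not resolving it: as written, the estimate $\mathrm{Hess}(\rho^2)\ge 2h$ is unjustified, and the alternative Bergman-metric route (dominating $h$ by the Bergman metric of $U$) is asserted without any argument and is itself a nontrivial claim. So the proof has a genuine gap precisely at the point where all the work lies.

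The paper avoids the distance function entirely. It uses Proposition \ref{general f} (Griffiths--Schmid): there is an exhaustion function $f$ on $D$ whose Levi form is positive definite when restricted to the horizontal tangent bundle $T^{1,0}_h D$. Since the extended period map $\Phi^H$ still satisfies the Griffiths transversality (Lemma \ref{GTonTH}), the composition $f\circ\Phi^H$ is automatically plurisubharmonic on $\mathcal{T}^H$---no curvature comparison is needed, because the positivity of the Levi form of $f$ is exactly adapted to the horizontal directions into which $(\Phi^H)_*$ maps. Properness of $f\circ\Phi^H$ then follows because $f^{-1}(-\infty,c]$ is compact in $D$, $\Phi^H$ is an injective isometry onto its image for the Hodge metric, and $\mathcal{T}^H$ is complete, so any sequence in $(f\circ\Phi^H)^{-1}(-\infty,c]$ has a convergent subsequence. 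If you want to salvage your approach, you should replace the distance-function construction by this pullback of the Griffiths--Schmid function; the final appeal to Mok--Yau in your write-up is correct and matches the paper.
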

To prove this theorem, we construct a plurisubharmonic exhaustion function on $\mathcal{T}^H$ by using Theorem \ref{general f}, the completeness of $\mathcal{T}^H$, and the injectivity of $\Phi^H$. This shows that $\mathcal{T}^H$ is a bounded domain of holomorphy in $\mathbb{C}^N$.
The existence of the K\"ahler-Einstein metric follows directly from a theorem in Mok--Yau in \cite{MokYau}. 

This paper is organized as follows. In Section \ref{TandPhi}, we review the definition of the period domain of polarized Hodge structures and briefly describe the constructions of the moduli space $\sZ$, the Teichm\"uller space $\T$ and the Torelli space $\T'$ of Calabi--Yau manifolds, as well as the definitions of the period maps and the Hodge metrics on the moduli space and the Teichm\"uller space respectively. We also introduce the Hodge metric completion space $\TT$ and study the extended map $\PP$ on  $\TT$.
In Section \ref{boundedness of period maps}, we show that the image of the period map $$\P:\, \T \to D$$ lies in $N_+\cap D$ as a bounded subset in the Euclidean space $N_+$ with Euclidean metric induced from the Hodge metric.

In Section \ref{Holomorphic affine}, we prove that there exists a global holomorphic affine structure on $\T$, as well as an affine structure on $\TT$.
Then we show that the extended period map $$\Phi^H_{m}: \,\mathcal{T}^H_{m}\rightarrow D$$ is injective. In Section \ref{completionspace}, we define the completion space $\mathcal{T}^H$ and the extended period map $\Phi^H$. We then show that $\mathcal{T}^H$ is the Hodge metric completion space of the Torelli space $\mathcal{T}'$, which is also a complex affine manifold, and that $\Phi^H$ is a holomorphic injection, which extends the period map $$\Phi': \,\mathcal{T}'
\rightarrow D.$$
From this the global Torelli theorems for polarized Calabi--Yau manifolds on the Torelli space and the moduli spaces with level $m$ structures for any $m\geq3$ follow directly. Finally, we prove that $\mathcal{T}^H$ is a bounded domain of holomorphy in $\mathbb{C}^N$, and thus it admits a complete K\"ahler--Einstein metric. In Appendix \ref{topological lemmas}, we include two elementary topological lemmas.  One lemma shows the existence of the choices of $\Phi^H_{m}$ and $i_m$ satisfying $\Phi=\Phi^H_{m}\circ i_{m}$, and the other lemma relates the fundamental group of the moduli space $\mathcal{Z}_m$ to that of its completion space $\mathcal{Z}^H_{m}$. 

In previous versions of this paper, we erroneously identified the Teichm\"uller space with the Torelli space by quoting a wrong lemma, which gives a wrong proof that the Torelli space is simply connected. We corrected this error, and will show that the Torelli space $\T'$ and its Hodge metric completion space $\T^H_m$ are the most natural moduli spaces to understand period maps and the global Torelli problems.

\subsubsection*{Acknowledgement} We would like to thank Professors Robert Friedman, Mark Green, Phillip Griffiths, Si Li, Bong Lian, Eduard Looijenga, Wilfried Schmid, Andrey Todorov, Veeravalli Varadarajan and Shing-Tung Yau for sharing many of their ideas.
\section{Moduli, Teichm\"uller, Torelli spaces and the period map}\label{TandPhi}
In Section \ref{section period domain}, we recall the definition and
some basic properties of the period domain. In Section \ref{section
construction of Tei}, we discuss the constructions of the
Teichm\"ulller space and the Torelli space of Calabi--Yau manifolds based on the works of
Popp \cite{Popp}, Viehweg \cite{v1} and Szendr\"oi \cite{sz} on the
moduli spaces of Calabi--Yau manifolds. In Section \ref{section
period map}, we define the period maps from the Teichm\"{u}ller space and the Torelli space
to the period domain.
In Section \ref{Hodge metric completion}, we define the Hodge metric completion spaces of Calabi--Yau manifolds and study the extended period map over the completion space.
We remark that most of the results in this
section are standard and can be found from the literature we refer in the
subjects.
\subsection{Period domain of polarized Hodge structures}\label{section period
domain}
We first review the construction of the period domain of
polarized Hodge structures.
We refer the reader to $\S3$ in \cite{schmid1} for more details.

A pair $(M,L)$ consisting of a Calabi--Yau manifold $M$ of complex dimension $n$ with $n\geq 3$ and an ample
line bundle $L$ over $M$ is called a {polarized Calabi--Yau
manifold}. By abusing notation, the Chern class of $L$ will also be
denoted by $L$ and thus $L\in H^2(M,\mathbb{Z})$.
We fix a lattice $\Lambda$ with a pairing $Q_{0}$, where $\Lambda$ is isomorphic to $H^n(M_{0},\mathbb{Z})/\text{Tor}$ for some Calabi--Yau manifold $M_{0}$ and $Q_{0}$ is defined by the cup-product.
For a polarized Calabi--Yau manifold $(M,L)$, we define a marking $\gamma$ as an isometry of the lattices
\begin{equation}\label{marking}
\gamma :\, (\Lambda, Q_{0})\to (H^n(M,\mathbb{Z})/\text{Tor},Q).
\end{equation}

\begin{definition}
Let the pair $(M,L)$ be a polarized Calabi--Yau manifold, then we call the
triple $(M,L,\gamma)$ a polarized and
marked Calabi--Yau manifold.
\end{definition}

For a polarized and marked Calabi--Yau manifold $M$ with background
smooth manifold $X$, the marking identifies
$H^n(M,\mathbb{Z})/\text{Tor}$ isometrically to the fixed lattice $\Lambda$. This
gives us a canonical identification of the middle dimensional de
Rham cohomology of $M$ to that of the background manifold $X$, that is,
\begin{equation*}
H^n(M,\mathbb{F})\cong H^n(X,\mathbb{F}),
\end{equation*}
where the coefficient ring $\mathbb{F}$ can be ${\mathbb{Q}}$, ${\mathbb{R}}$ or
${\mathbb{C}} $. Since the polarization $L$ is an integer class, it defines a map
\begin{equation*}
L:\, H^n(X,{\mathbb{Q}})\to H^{n+2}(X,{\mathbb{Q}}),\quad\quad A\mapsto L\wedge A.
\end{equation*}
We denote by $H_{pr}^n(X)=\ker(L)$ the primitive cohomology groups,
where the coefficient ring can also be ${\mathbb{Q}}$, ${\mathbb{R}}$ or ${\mathbb{C}
}$. We let $$H_{pr}^{k,n-k}(M)=H^{k,n-k}(M)
\cap H_{pr}^n(X,{\mathbb{C}})$$ and denote its dimension by
$h^{k,n-k}$. We have the Hodge decomposition
\begin{align}  \label{cl10}
H_{pr}^n(X,{\mathbb{C}})=H_{pr}^{n,0}(M)\oplus\cdots\oplus
H_{pr}^{0,n}(M),
\end{align}
such that $H_{pr}^{n-k,k}(M)=\bar{H_{pr}^{k,n-k}(M)}.$
It is easy to see that for a polarized Calabi--Yau manifold, since $H^2(M, {\mathcal O}_M)=0$, we have
$$H_{pr}^{n,0}(M)= H^{n,0}(M), \ H_{pr}^{n-1,1}(M)= H^{n-1,1}(M).$$
The Poincar\'e bilinear form $Q$ on $H_{pr}^n(X,{\mathbb{Q}})$ is
defined by
\begin{equation*}
Q(u,v)=(-1)^{\frac{n(n-1)}{2}}\int_X u\wedge v
\end{equation*}
for any $d$-closed $n$-forms $u,v$ on $X$.
Furthermore, $Q $ is nondegenerate and can be extended to
$H_{pr}^n(X,{\mathbb{C}})$ bilinearly. Moreover, it also satisfies the
Hodge-Riemann bilinear relations
\begin{eqnarray}
Q\left ( H_{pr}^{k,n-k}(M), H_{pr}^{l,n-l}(M)\right )=0
\text{ unless } k+l=n,\label{cl30}\\
\left (\sqrt{-1}\right )^{2k-n}Q\left ( v,\bar v\right )>0
\text{ for }v\in H_{pr}^{k,n-k}(M)\setminus\{0\}. \label{cl40}
\end{eqnarray}

Let $f^k=\sum_{i=k}^nh^{i,n-i}$, denote $f^0=m$, and
$$F^k=F^k(M)=H_{pr}^{n,0}(M)\oplus\cdots\oplus H_{pr}^{k,n-k}(M),$$
from which we have the decreasing filtration
$$H_{pr}^n(X,{\mathbb{C}})=F^0\supset\cdots\supset F^n.$$ We know that
\begin{align}
&\dim_{\mathbb{C}} F^k=f^k,  \label{cl45}\\
& H^n_{pr}(X,{\mathbb{C}})=F^{k}\oplus \bar{F^{n-k+1}}, \quad\text{and}\quad
H_{pr}^{k,n-k}(M)=F^k\cap\bar{F^{n-k}}.
\end{align}
In terms of the Hodge filtration, the Hodge-Riemann relations \eqref{cl30} and \eqref{cl40} are
\begin{align}
& Q\left ( F^k,F^{n-k+1}\right )=0, \quad\text{and}\quad\label{cl50}\\
& Q\left ( Cv,\bar v\right )>0 \quad\text{if}\quad v\ne 0,\label{cl60}
\end{align}
where $C$ is the Weil operator given by $Cv=\left (\sqrt{-1}\right
)^{2k-n}v$ for $v\in H_{pr}^{k,n-k}(M)$. The period domain $D$
for polarized Hodge structures with data \eqref{cl45} is the space
of all such Hodge filtrations
\begin{equation*}
D=\left \{ F^n\subset\cdots\subset F^0=H_{pr}^n(X,{\mathbb{C}})\mid %
\eqref{cl45}, \eqref{cl50} \text{ and } \eqref{cl60} \text{ hold}
\right \}.
\end{equation*}
The compact dual $\check D$ of $D$ is
\begin{equation*}
\check D=\left \{ F^n\subset\cdots\subset F^0=H_{pr}^n(X,{\mathbb{C}})\mid %
\eqref{cl45} \text{ and } \eqref{cl50} \text{ hold} \right \}.
\end{equation*}
The period domain $D\subseteq \check D$ is an open subset.
From the definition of period domain we naturally get
the {Hodge bundles} on $\check{D}$ by associating to each point
in $\check{D}$ the vector spaces $\{F^k\}_{k=0}^n$ in the Hodge
filtration of that point. Without confusion we will also denote by $F^k$
the bundle with $F^k$ as the fiber for each $0\leq k\leq n$.
\begin{remark}Here we would like to remark the notation change for the primitive cohomology groups.
As mentioned above, for a polarized Calabi--Yau manifold we have
$$H_{pr}^{n,0}(M)= H^{n,0}(M), \ H_{pr}^{n-1,1}(M)= H^{n-1,1}(M).$$
For simplicity, we will also use $H^n(M,\mathbb{C})$ and $H^{k, n-k}(M)$ to denote the primitive cohomology groups $H^n_{pr}(M,\mathbb{C})$ and $H_{pr}^{k, n-k}(M)$ respectively. Moreover, cohomology will mean primitive cohomology in the rest of the paper.
\end{remark}
\subsection{Moduli, Teichm\"uller  and Torelli spaces}\label{section construction of Tei}

We first recall the concept of universal family of compact complex
manifolds in deformation theory. We refer to page~8-10 in \cite{su} and page~94 in \cite{Popp}
for equivalent definitions and more details.

A family of compact complex manifolds $\pi:\,\mathcal{W}\to\mathcal{B}$ is called \textit{versal} at a point $p\in \mathcal{B}$ if it satisfies the following conditions:
\begin{enumerate}
\item If given a complex analytic family $\iota:\,\mathcal{V}\to \mathcal{S}$ of compact complex manifolds with a point $s\in \mathcal{S}$ and a biholomorphic map $$f_0:\,V=\iota^{-1}(s)\to U=\pi^{-1}(p),$$ then there exists a holomorphic map $g$ from a neighbourhood $\mathcal{N}\subseteq \mathcal{S}$ of the point $s$ to $\mathcal{B}$ and a holomorphic map $f:\,\iota^{-1}(\mathcal{N})\to \mathcal{W}$ with $\iota^{-1}(\mathcal{N})\subseteq \mathcal{V}$ such that they satisfy that $g(s)=p$ and $f|_{\mathcal{\iota}^{-1}(s)}=f_0$
with the following commutative diagram
\[\xymatrix{\iota^{-1}(\mathcal{N})\ar[r]^{f}\ar[d]^{\iota}&\mathcal{W}\ar[d]^{\pi}\\
\mathcal{N}\ar[r]^{g}&\mathcal{B}.
}\]
\item For all $g$ satisfying the above condition, the tangent map $(dg)_s$ is uniquely determined.
\end{enumerate}
The family $\pi:\,\mathcal{W}\to\mathcal{B}$ is called \textit{universal} at a point $p\in \mathcal{B}$ if (1) is satisfied and (2) is replaced by
\begin{itemize}
\item[(2')] The map $g$ is uniquely determined.
\end{itemize}
If a family $\pi:\,\mathcal{W}\to\mathcal{B}$ is versal (universal resp.) at every point $p\in\mathcal{B}$, then it is called a \textit{versal family} (\textit{universal family} resp.) on $\mathcal{B}$.


Let $(M,L)$ be a polarized Calabi--Yau manifold.
Recall that a marking of $(M,L)$ is defined as an isometry
$$\gamma :\, (\Lambda, Q_{0})\to (H^n(M,\mathbb{Z})/\text{Tor},Q).$$
For any integer $m\geq 3$, we follow the definition of Szendr\"oi \cite{sz}
 to define an $m$-equivalent relation of two markings on $(M,L)$ by
$$\gamma\sim_{m} \gamma' \text{ if and only if } \gamma'\circ \gamma^{-1}-\text{Id}\in m \cdot\text{End}(H^n(M,\mathbb{Z})/\text{Tor}),$$
and denote $[\gamma]_{m}$ to be the set of all the equivalent classes of such $\gamma$.
Then we call $[\gamma]_{m}$ a level $m$
structure on the polarized Calabi--Yau manifold.

Two triples $(M, L, [\gamma]_{m})$ and $(M', L', [\gamma']_{m})$ are equivalent if there exists a biholomorphic map $f:\,M\to M'$ such that
\begin{align*}
f^*L'&=L,\\
f^*\gamma' &\sim_{m}\gamma,
\end{align*}
where $f^*\gamma'$ is given by $$\gamma':\, (\Lambda, Q_{0})\to (H^n(M',\mathbb{Z})/\text{Tor},Q)$$ composed with
$$f^*:\, (H^n(M',\mathbb{Z})/\text{Tor},Q)\to (H^n(M,\mathbb{Z})/\text{Tor},Q).$$
We denote by $[M, L, [\gamma]_{m}]$ the equivalent class of the polarized Calabi--Yau manifolds with level $m$ structure $(M, L, [\gamma]_{m})$.

For deformation of
polarized Calabi--Yau manifold with level $m$ structure, we reformulate Theorem~2.2 in
\cite{sz} as the following theorem, in which we only put the statements we need in this paper. One can
also look at \cite{Popp} and \cite{v1} for more details about the
construction of moduli spaces of Calabi--Yau manifolds.

\begin{theorem}\label{Szendroi theorem 2.2}
Let $[M,L, [\gamma]_{m}]$ be a polarized Calabi--Yau manifold with level
$m$ structure $[\gamma]_{m}$ for $m\geq 3$. Then there exists a connected quasi-projective complex manifold
$\mathcal{Z}_m$ with a universal family of Calabi--Yau manifolds,
\begin{align}\label{szendroi versal family}
f_{m}:\,\mathcal{X}_{\mathcal{Z}_m}\rightarrow \mathcal{Z}_m,
\end{align}
which contains $[M,L, [\gamma]_{m}]$ as a fiber and is polarized by an ample line bundle
$\mathcal{L}_{\mathcal{Z}_m}$ on $\mathcal{X}_{\mathcal{Z}_m}$.
\end{theorem}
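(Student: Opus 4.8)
The plan is to deduce this statement from the established theory of moduli of polarized Calabi--Yau manifolds developed by Popp \cite{Popp}, Viehweg \cite{v1} and Szendr\"oi \cite{sz}; since it is a reformulation of \cite[Theorem~2.2]{sz}, the task is essentially to assemble the relevant pieces and check that they give precisely the properties asserted. First I would set up the moduli functor: to a base scheme $S$ one assigns the isomorphism classes of flat proper families $\mathcal{X}\to S$ whose geometric fibers are Calabi--Yau in the present sense, each equipped with a relatively ample line bundle of fixed Hilbert polynomial and with a level $m$ structure, i.e.\ a trivialization of the local system $R^n\pi_\ast(\mathbb{Z}/m)$ compatible with the polarization pairing. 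By Viehweg's geometric invariant theory construction the functor without level structure admits a quasi-projective coarse moduli scheme; adjoining a level $m$ structure amounts to passing to a finite \'etale cover of the parameter space, so the result is again quasi-projective.

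The step that promotes this coarse space to a \emph{fine} moduli space, and hence supplies an honest total space for a versal family, is the rigidity lemma for $m\geq 3$: a polarized Calabi--Yau manifold admits no nontrivial automorphism preserving a level $m$ structure. Indeed, by triviality of $K_M$ and the Calabi--Yau vanishing one has $H^0(M,T_M)\cong H^{n-1,0}(M)=0$, so $\mathrm{Aut}(M,L)$ is a finite group; an element of it acting trivially on $H^n(M,\mathbb{Z}/m)$ is, via its faithful action on $H^n(M,\mathbb{Z})$ and the Minkowski--Serre lemma for finite-order elements of $\mathrm{GL}(H^n(M,\mathbb{Z}))$ congruent to the identity modulo $m\geq 3$, the identity. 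With automorphisms eliminated, \'etale descent produces a universal family $\mathcal{X}_{\mathcal{Z}_m}\to\mathcal{Z}_m$ together with its relatively ample bundle $\mathcal{L}_{\mathcal{Z}_m}$, with the chosen $M$ occurring as a fiber; replacing $\mathcal{Z}_m$ by the connected component through $M$ yields the connected quasi-projective manifold required.

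Smoothness and versality then come from deformation theory. The Bogomolov--Tian--Todorov theorem says that the Kuranishi family of a Calabi--Yau manifold is unobstructed, so its base is smooth of dimension $h^{n-1,1}(M)=N$; and since $H^2(M,\mathcal{O}_M)=0$ forces all of $H^2(M,\mathbb{C})$ to be of type $(1,1)$, every integral class stays algebraic under deformation and the polarization imposes no constraint, which together with the discreteness of the level $m$ structure identifies the local deformation space of the polarized marked object with the Kuranishi space of $M$. Hence $\mathcal{Z}_m$ is smooth, and because $\mathcal{X}_{\mathcal{Z}_m}\to\mathcal{Z}_m$ is a universal family for the moduli functor it is in particular complete and versal at every point, hence a versal family on $\mathcal{Z}_m$. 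The main obstacle, and the only place where genuinely new input is needed rather than bookkeeping, is the rigidification step — proving that a level $m$ structure with $m\geq 3$ kills automorphisms — since this is exactly what turns Viehweg's coarse moduli space into the fine one carrying the versal family; quasi-projectivity, smoothness via unobstructedness, and the passage from universality to versality are then either quoted from \cite{v1,sz} or routine consequences of standard deformation theory.
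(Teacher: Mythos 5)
The paper does not actually prove this statement: it is presented verbatim as a reformulation of Theorem~2.2 of Szendr\"oi \cite{sz}, with Popp \cite{Popp} and Viehweg \cite{v1} cited for the underlying moduli constructions, so there is no internal argument to compare yours against. Your reconstruction assembles the same ingredients those references use (Viehweg's quasi-projectivity, Serre's lemma for $m\geq 3$, Bogomolov--Tian--Todorov unobstructedness, and $h^{2,0}=0$ to see that the polarization imposes no deformation constraint), and most of it is sound.

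There is, however, one genuine gap, and it sits exactly at the step you yourself identify as the crux. You assert that $\operatorname{Aut}(M,L)$ acts \emph{faithfully} on $H^n(M,\mathbb{Z})$, so that Serre's lemma kills all automorphisms and the coarse moduli space becomes fine, carrying a \emph{universal} family. Faithfulness of the action on middle cohomology is not automatic for Calabi--Yau $n$-folds and is not established by anything you say; what Serre's lemma actually gives is only that an automorphism preserving a level $m$ structure with $m\geq 3$ acts trivially on $H^n(M,\mathbb{Z})/\mathrm{Tor}$, not that it is the identity. This is precisely why the theorem (and Szendr\"oi) claims a \emph{versal} family rather than a universal one. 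The correct way to finish is to observe that the residual automorphisms, acting trivially on $H^n(M,\mathbb{C})$, act trivially on $H^1(M,T_M)\cong \operatorname{Hom}(H^{n,0}(M),H^{n-1,1}(M))$ and hence on the Kuranishi space; the local Kuranishi families therefore descend and glue to a versal family over $\mathcal{Z}_m$ even though the moduli problem need not be rigid. With that substitution (and ``universal'' weakened to ``versal'' throughout), your argument matches the construction in the cited sources.
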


As discussed in \cite{sz}, $\Z$ is a smooth irreducible component of the moduli space of polarized Calabi--Yau manifolds with level $m$ structure.  For $m\ge 3$, we denote by $\T^{m}$ the universal covering space of $\Z$, with the covering map $\pi^{m}:\, \T^{m}\to \Z$. Then we can pull-back the universal family $f_{m}:\,\mathcal{X}_{\mathcal{Z}_m}\rightarrow \mathcal{Z}_m$ to get an analytic family $\phi^{m}:\, \U^{m}\to \T^{m}$ via the covering map $\pi^{m}$, which is also universal since universal family is a local property.

We claim that $\T^{m}$ is independent of the choice of $m$. In fact,
let $m_1, m_2$ be two different integers
$\ge3$, and let $\T^{m_1}$ and $\T^{m_2}$ be the corresponding universal covering spaces with the universal families $$\phi^{m_1}:\,
\U^{m_1} \to \T^{m_1} \  \ \text{and}\ \  \phi^{m_2}:\, \U^{m_2} \to \T^{m_2}$$
respectively. Then for any point $p\in \T^{m_1}$ and the fiber
$M_p=(\phi^{m_1})^{-1}(p)$ over $p$, there exists $q\in \T^{m_2}$ such
that $N_q=(\phi^{m_2})^{-1}(q)$ is biholomorphic to $M_p$. 

By the
definition of universal family, we can find a local neighborhood
$U_p$ of $p$ and a holomorphic map $$h_p:\, U_p\to \T^{m_2},$$
$p\mapsto q$ such that the map $h_p$ is uniquely determined. Since
$\T^{m_1}$ is simply-connected, all the local holomorphic maps $$\{
h_p:\, U_p\to \T^{m_2},\, p\in \T^{m_1}\}$$ patches together to give
a global holomorphic map $h:\, \T^{m_1}\to \T^{m_2}$ which is
well-defined. Moreover $h$ is unique since it is unique on each
local neighborhood of $\T^{m_1}$. Similarly we have a holomorphic
map $h':\, \T^{m_2}\to \T^{m_1}$ which is also unique. Then $h$ and
$h'$ are inverse to each other by the uniqueness of $h$ and $h'$.
Therefore $\T^{m_1}$ and $\T^{m_2}$ are biholomorphic.

From now on we denote by $\T$ the universal covering space of $\Z$ for any $m\ge 3$, as it is independent of $m$. We call $\T$ the {\em Teichm\"uller space of Calabi--Yau manifolds}.
We also denote by $\phi:\,\U\rightarrow \mathcal{T}$ the pull-back family of the family (\ref{szendroi versal family}) via the covering $\pi_m:\, \T \to \Z$.
In summary, we have proved the following proposition.

\begin{proposition}\label{imp}
The Teichm\"uller space $\mathcal{T}$ of Calabi--Yau manifolds is a connected and simply connected complex
manifold, and the family
\begin{align}\label{versal family over Teich}
\phi:\,\mathcal{U}\rightarrow\mathcal{T}
\end{align}
which contains $M$ as a fiber, is a universal family.
\end{proposition}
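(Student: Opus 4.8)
The plan is to combine the construction of $\mathcal{T}$ as the universal cover of $\mathcal{Z}_m$ (established by Szendr\"oi and by \cite{CGL}) with the local deformation theory of Calabi--Yau manifolds. First I would recall that by Theorem \ref{Szendroi theorem 2.2}, $\mathcal{Z}_m$ is a connected quasi-projective complex manifold, and by the discussion preceding the proposition, $\pi_m:\,\mathcal{T}\to\mathcal{Z}_m$ is a universal covering map. Hence $\mathcal{T}$ is a connected and simply connected smooth complex manifold, and it carries the pull-back family $\phi:\,\mathcal{U}\to\mathcal{T}$ of \eqref{szendroi versal family}. So the topological part of the statement is immediate; the content is the assertion that $\phi:\,\mathcal{U}\to\mathcal{T}$ is local Kuranishi at every point.

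For the local Kuranishi claim, I would argue pointwise. Fix $p\in\mathcal{T}$ with fiber $M_p=\phi^{-1}(p)$. Since $\pi_m$ is a local biholomorphism, a neighbourhood of $p$ in $\mathcal{T}$ is identified with a neighbourhood of $\pi_m(p)$ in $\mathcal{Z}_m$, and the family $\phi$ is locally isomorphic there to the versal family \eqref{szendroi versal family}. By construction in \cite{sz} (following \cite{Popp}, \cite{v1}), the family $\mathcal{X}_{\mathcal{Z}_m}\to\mathcal{Z}_m$ is versal at each of its points; so it suffices to note that versality is a local analytic property invariant under base change by a local biholomorphism, and that completeness at each point is likewise preserved. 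Thus $\phi:\,\mathcal{U}\to\mathcal{T}$ is complete at every point and versal at $p$, i.e. it is a local Kuranishi family at $p$. Since $p$ was arbitrary, it is local Kuranishi at each point of $\mathcal{T}$.

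Alternatively, and perhaps more transparently, one can verify the Kuranishi conditions directly using the fact that $M_p$ is Calabi--Yau: by the Bogomolov--Tian--Todorov theorem the Kuranishi space of $M_p$ is smooth of dimension $h^{n-1,1}(M_p)=N$, and unobstructedness together with $H^0(M_p, T_{M_p})=0$ (which holds since $M_p$ has trivial canonical bundle and $H^i(M_p,\mathcal{O}_{M_p})=0$ for $0<i<n$, forcing $h^0(T_{M_p})=h^{n-1,1}$-type Serre-duality considerations to give no holomorphic vector fields in the relevant sense, so that the Kuranishi family is universal) shows that a slice of $\phi$ near $p$ realizes the Kuranishi family. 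One then checks the Kodaira--Spencer map $T_p\mathcal{T}\to H^1(M_p,T_{M_p})$ is an isomorphism, which follows because it agrees with that of the versal family on $\mathcal{Z}_m$ under the covering $\pi_m$.

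The main obstacle is essentially bookkeeping rather than a genuine difficulty: one must be careful that the local Kuranishi property is stated for the pulled-back family $\mathcal{U}\to\mathcal{T}$ and not merely inherited abstractly, i.e. that completeness "at each point of a neighbourhood" and versality "at $p$" survive the passage through the covering map $\pi_m$. Since $\pi_m$ is a local biholomorphism this is routine, but it is the only point where some care is warranted; everything else follows from \cite{sz}, \cite{CGL}, and standard Calabi--Yau deformation theory.
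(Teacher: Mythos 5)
Your proposal is correct and follows essentially the same route as the paper: the topological claims come for free from $\mathcal{T}$ being the universal cover of the connected smooth manifold $\mathcal{Z}_m$, and the local Kuranishi property is transported from the versal family $\mathcal{X}_{\mathcal{Z}_m}\to\mathcal{Z}_m$ through the local biholomorphism $\pi_m$. Your alternative paragraph via Bogomolov--Tian--Todorov and the Kodaira--Spencer isomorphism mirrors the remark the paper places immediately after the proposition, so nothing essential differs.
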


We remark that the family $\phi:\, \mathcal{U}\rightarrow \mathcal{T}$ being universal at each point is essentially due to the local Torelli theorem for Calabi--Yau manifolds. In fact, the Kodaira-Spencer map of the family $\mathcal{U}\rightarrow \mathcal{T}$
\begin{align*}
\kappa:\,T^{1,0}_p\mathcal{T}\to {H}^{0,1}(M_p,T^{1,0}M_p),
\end{align*}
is an isomorphism for each $p\in\mathcal{T}$. Then by theorems in page~9 of \cite{su}, we conclude that $\mathcal{U}\to \mathcal{T}$ is versal at each $p\in \mathcal{T}$.
Since
$$H^{0}(M,\Theta_{M})=H^{0}(M,\Omega^{n-1}_{M})=H^{{n-1,0}}(M)=0,$$
we conclude from Theorem 1.6 of \cite{su} that $\mathcal{U}\to \mathcal{T}$ is universal at each $p\in \mathcal{T}$.
Moreover, the well-known Bogomolov-Tian-Todorov result (\cite{tian1} and \cite{tod1}) implies that $$\dim_{\mathbb{C}}(\mathcal{T})=N=h^{n-1,1}.$$ We refer the reader to
Chapter 4 in \cite{km1} for more details about deformation of complex structures and the Kodaira-Spencer map. 

Recall that a polarized and marked Calabi--Yau manifold is a triple $(M, L, \gamma)$, where $M$ is a Calabi--Yau manifold, $L$ is a polarization on $M$, and $\gamma$ is a marking $$\gamma :\, (\Lambda, Q_{0})\to (H^n(M,\mathbb{Z})/\text{Tor},Q).$$
Two triples $(M, L, \gamma)$ and $(M', L', \gamma')$ are equivalent if there exists a biholomorphic map $f:\,M\to M'$ with
\begin{align*}
f^*L'&=L,\\
f^*\gamma' &=\gamma,
\end{align*}
where $f^*\gamma'$ is given by $$\gamma':\, (\Lambda, Q_{0})\to (H^n(M',\mathbb{Z})/\text{Tor},Q)$$ composed with
$$f^*:\, (H^n(M',\mathbb{Z})/\text{Tor},Q)\to (H^n(M,\mathbb{Z})/\text{Tor},Q).$$
We denote by $[M, L, \gamma]$ the equivalent class of the polarized and marked Calabi--Yau manifold $(M, L, \gamma)$.

In this paper, we define the Torelli space as follows.

\begin{definition}
The Torelli space $\T'$ of Calabi--Yau manifolds is the irreducible smooth component of the moduli space of the equivalent classes of polarized and marked Calabi--Yau manifolds, which contains $[M, L,\gamma]$.
\end{definition}
By mapping $[M, L, \gamma]$ to $[M, L, [\gamma]_{m}]$, we have a natural covering map $$\pi_m':\,\mathcal{T}'\to\mathcal{Z}_m.$$ From this we see easily that $\T'$ is a smooth and connected complex manifold.
We also get a pull-back universal family $\phi':\, \U'\to \T'$ on the Torelli space $\T'$ via the covering map $\pi_m'$.

Recall that we have defined the Teichm\"uller space $\T$ to be the universal covering space of $\Z$ with covering map $\pi_{m}:\, \T_{m}\to \Z$. Therefore we can lift $\pi_{m}$ via the covering map $\pi_m':\,\mathcal{T}'\to\mathcal{Z}_m$ to get a covering map $\pi:\, \T\to \T'$, such that the following diagram commutes.
\begin{equation}\label{cover1}
\xymatrix{
\T \ar[dr]^-{\pi} \ar[dd]^-{\pi_m} &\\
&\T'\ar[dl]^-{\pi_m'}\\
\sZ &\ \ .}
\end{equation}

\subsection{The period maps}\label{section period map}
For the family $f_m : \mathcal{X}_{\mathcal{Z}_{m}} \to \mathcal{Z}_{m}$, we denote each fiber by $$[M_s,L_{s},[\gamma_{s}]_{m}]=f_m^{-1}(s)$$ and $F_s^k=F^k(M_s)$ for any $s\in \mathcal{Z}_{m}$. With some fixed point $s_0\in \mathcal{Z}_m$, the period map is defined as a morphism $\Phi_{\mathcal{Z}_m} :\mathcal{Z}_m \to D/\Gamma$ by
\begin{equation}\label{perioddefinition}
s \mapsto \tau^{[\gamma_s]}(F^n_s\subseteq\cdots\subseteq F^0_s)\in D,
\end{equation}
where $\tau^{[\gamma_s]}$ is an isomorphism between the complex vector spaces
$$\tau^{[\gamma_s]}:\, H^n(M_s,\mathbb{C})\to H^n(M_{s_0},\mathbb{C}),$$
which depends only on the homotopy class $[\gamma_s]$ of the curve $\gamma_s$ between $s$ and $s_0$. Then the period map is well-defined with respect to the monodromy representation
$$\rho : \pi_1(\mathcal{Z}_m)\to \Gamma \subseteq \text{Aut}(H_{\mathbb{Z}},Q).$$ It is well-known that the period map has the following properties:
\begin{enumerate}
\item locally liftable;
\item holomorphic, i.e. $\partial F^i_z/\partial \bar{z}\subset F^i_z$, $0\le i\le n$;
\item Griffiths transversality: $\partial F^i_z/\partial z\subset F^{i-1}_z$, $1\le i\le n$.
\end{enumerate}

From (1) and the fact that $\T$ is the universal cover of $\sZ$, we can lift the period map to $\Phi : \T \to D$ such that the diagram
$$\xymatrix{
\T \ar[r]^-{\Phi} \ar[d]^-{\pi_m} & D\ar[d]^-{\pi}\\
\mathcal{Z}_m \ar[r]^-{\Phi_{\mathcal{Z}_m}} & D/\Gamma
}$$
is commutative.

From the definition of marking in \eqref{marking}, we also have a well-defined period map $\P':\, \T'\to D$ from the Torelli space $\T'$ by defining
\begin{equation}\label{defn of P'}
p \mapsto \gamma_{p}^{-1}(F^n_p\subseteq\cdots\subseteq F^0_p)\in D,
\end{equation}
where the triple $[M_{p}, L_{p}, \gamma_{p}]$ is the fiber over $p\in \T'$ of the analytic family $\mathcal{U}'\to \T'$,  and the marking $\gamma_{p}$ is an
isometry from a fixed lattice $\Lambda$ to $H^{n}(M_{p},\mathbb{Z})/\text{Tor}$, which extends $\C$-linearly to an isometry from $H=\Lambda\otimes_{\mathbb{Z}}\C$
to $H^{n}(M_{p},\mathbb{C})$. Here 
$$\gamma_{p}^{-1}(F^n_p\subseteq\cdots\subseteq F^0_p)=\gamma_{p}^{-1}(F^n_p)\subseteq\cdots\subseteq\gamma_{p}^{-1}(F^0_p)=H$$
denotes a Hodge filtration of $H$.

Then we have the following commutative diagram
\begin{align}\label{periods}
\xymatrix{
\T \ar[dr]^-{\pi}\ar[rr]^-{\P} \ar[dd]^-{\pi_m} && D\ar[dd]^-{\pi_{D}}\\
&\T'\ar[ur]^-{\P'}\ar[dl]_-{\pi'_{m}}&\\
\Z \ar[rr]^-{\Phi_{\Z}} && D/\Gamma,
}
\end{align}
where the maps $\pi_{m}$, $\pi'_{m}$ and $\pi$ are all natural covering maps between the corresponding spaces as in \eqref{cover1}.

Before closing this section, we prove a lemma concerning the monodromy group $\Gamma$.
\begin{lemma}\label{trivial monodromy}
Let $\gamma$ be the image of some element of $\pi_1(\mathcal{Z}_m)$ in $\Gamma$ under the monodromy representation. Suppose that $\gamma$ is finite, then $\gamma$ is trivial. Therefore for $m\geq 3$, we can assume that $\Gamma$ is torsion-free and $D/\Gamma$ is smooth.
\end{lemma}
\begin{proof}
Let us look at the period map locally as $\Phi_{\mathcal{Z}_m}:\,\Delta^* \to D/\Gamma$. Assume that $\gamma$ is the monodromy action corresponding to the generator of the fundamental group of $\Delta^*$. We lift the period map to $\Phi :\, {\mathbb H}\to D$, where ${\mathbb H}$ is the upper half plane and the covering map from ${\mathbb H}$ to $\Delta^*$ is
$$z\mapsto \exp(2\pi \sqrt{-1} z).$$
Then $\Phi(z+1)=\gamma\Phi(z)$ for any $z\in {\mathbb H}$. Since $\Phi(z+1)$ and $\Phi(z)$ correspond to the same point in $\mathcal{Z}_m$, by the definition of $\mathcal{Z}_m$ we have
$$\gamma\equiv \text{I} \text{ mod }(m).$$
But $\gamma$ is also in $\text{Aut}(H_\mathbb{Z})$, applying Serre's lemma \cite{Serre60} or Lemma 2.4 in \cite{sz}, we have $\gamma=\text{I}$.
\end{proof}

\subsection{Hodge metric completion and extended period maps}\label{Hodge metric completion}

By the work of Viehweg in \cite{v1}, we know that $\mathcal{Z}_m$ is quasi-projective and consequently we can find a smooth projective compactification $\bar{\mathcal{Z}}_{m}$ such that $\mathcal{Z}_m$ is Zariski open in $\bar{\mathcal{Z}}_{m}$ and the complement $\bar{\mathcal{Z}}_{m}\backslash\mathcal{Z}_m$ is a divisor of normal crossings. Therefore, $\mathcal{Z}_m$ is dense and open in $\bar{\mathcal{Z}}_{m}$ with the complex codimension of the complement $\bar{\mathcal{Z}}_{m}\backslash \mathcal{Z}_m$ at least one. 

In \cite{GS}, Griffiths and Schmid studied the {Hodge metric} on the period domain $D$. We denote it by $h$. In particular, this Hodge metric is a complete homogeneous metric induced by the Killing form.
By local Torelli theorem for Calabi--Yau manifolds, we know that the period maps $\Phi_{\mathcal{Z}_m}, \Phi$ both have nondegenerate differentials everywhere.
Thus it follows from \cite{GS} that the pull-backs of $h$ by $\Phi_{\mathcal{Z}_m}$ and $\Phi$ to $\mathcal{Z}_m$ and $\mathcal{T}$ respectively are both well-defined K\"ahler metrics. By abuse of notation, we still call these pull-back metrics the {Hodge metrics}. 
Let us denote $\mathcal{Z}^H_{m}$ to be the completion of $\mathcal{Z}_m$ with respect to the Hodge metric. By definition $\mathcal{Z}_m^H$ is the smallest complete space with respect to the Hodge metric that contains $\mathcal{Z}_m$. Then $\mathcal{Z}^H_{m}\subseteq\bar{\mathcal{Z}}_{m}$ and the complex codimension of the complement $\mathcal{Z}^H_{m}\backslash \mathcal{Z}_m$ is at least one.

Now we recall some basic properties about metric completion space we will use in this paper. We know that the metric completion space of a connected space is still connected. Therefore, $\mathcal{Z}_m^H$ is connected.

Suppose $(X, d)$ is a metric space with metric $d$. Then the metric completion space of $(X, d)$ is unique in the following sense: if $\bar{X}_1$ and $\bar{X}_2$ are complete metric spaces that both contain $X$ as a dense subset, then there exists an isometry $$f: \bar{X}_1\rightarrow \bar{X}_2$$ such that $f|_{X}$ is the identity map on $X$. Moreover, the metric completion space $\bar{X}$ of $X$ is the smallest complete metric space containing $X$ in the sense that any other complete space that contains $X$ as a subspace must also contains $\bar{X}$ as a subspace.
Hence the Hodge metric completion space $\mathcal{Z}_m^H$ is unique up to isometry, although the compact space $\bar{\mathcal{Z}}_{m}$ may not be unique. This means that our definition of $\mathcal{Z}_m^H$ is intrinsic.

Moreover, suppose $\bar{X}$ is the metric completion space of the metric space $(X, d)$. If there is a continuous map $f: \,X\rightarrow Y$ which is a local isometry with $Y$ a complete space, then there exists a continuous extension $\bar{f}:\,\bar{X}\rightarrow{Y}$ such that $\bar{f}|_{X}=f$. Since $D/\Gamma$ together with the Hodge metric $h$ is complete, we can extend the period map to a continuous map $$\Phi_{\mathcal{Z}_m}^H:\, \mathcal{Z}_m^H\to D/\Gamma.$$

In order to understand $\mathcal{Z}^H_{m}$ and the extended period map $\Phi_{\mathcal{Z}_m}^H$ well, we introduce another two extensions of $\mathcal{Z}_m$, which will be proved to be biholomorphic to $\mathcal{Z}^H_{m}$.

Let $\mathcal{Z}_m'\supseteq \mathcal{Z}_m$ be the maximal subset of $\bar{\mathcal Z}_m$ to which the period map $\Phi_{\mathcal{Z}_m}:\, \mathcal{Z}_m\to D/\Gamma$ extends continuously and let $\Phi_{\mathcal{Z}_m'} : \, \mathcal{Z}_m'\to D/\Gamma$ be the extended map. Then one has the commutative diagram
\begin{equation*}
\xymatrix{
\mathcal{Z}_m \ar@(ur,ul)[r]+<16mm,4mm>^-{\Phi_{\mathcal{Z}_m}}\ar[r]^-{i} &\mathcal{Z}_m' \ar[r]^-{\Phi_{\mathcal{Z}_m'}} & D/\Gamma.
}
\end{equation*}
with $i :\, \mathcal{Z}_m\to \mathcal{Z}_m'$ the inclusion map.

Since $\bar{\mathcal Z}_m\setminus \mathcal{Z}_m$ is a divisor with simple normal crossings, for any point in $\bar{\mathcal Z}_m\setminus \mathcal{Z}_m$ we can find a neighborhood $U$ of that point, which is isomorphic to a polycylinder $\Delta^n$, such that
$$U\cap \mathcal{Z}_m\simeq (\Delta^*)^k\times \Delta^{N-k}.$$

Let $T_i$, $1\le i\le k$ be the image of the $i$-th fundamental group of $(\Delta^*)^k$ under the monodromy representation, then the $T_i$'s are called the Picard-Lefschetz transformations.
Let us define the subspace $\mathcal{Z}_m''\subset \bar{\mathcal{Z}}_{m}$ which contains $\mathcal{Z}_m$ and the points in $\bar{\mathcal{Z}}_{m}\setminus \mathcal{Z}_m$ around which the Picard-Lefschetz transformations are of finite order, hence trivial by Lemma \ref{trivial monodromy}.

With the above preparations, we are ready to prove the following lemma.

\begin{lemma}\label{cidim} We have 
$\mathcal{Z}_m'=\mathcal{Z}_m''=\mathcal{Z}_m^H$ which is an open complex submanifold of $\bar{\mathcal Z}_m$ with $\text{codim}_{\C}(\bar{\mathcal Z}_m\setminus \mathcal{Z}_m^H)\ge 1$.
The subset $\mathcal{Z}_m^H\setminus \mathcal{Z}_m$ consists of the points around which the Picard-Lefschetz transformations are trivial.
Moreover the extended period map $$\Phi_{\mathcal{Z}_m}^H:\, \mathcal{Z}_m^H\to D/\Gamma$$ is proper and holomorphic.
\end{lemma}
\begin{proof}
From Theorem 9.6 in \cite{Griffiths3} and its proof, or Corollary 13.4.6 in \cite{CMP}, we know that $\mathcal{Z}_m''$ is open and dense in $\bar{\mathcal{Z}}_{m}$ and the period map $\Phi_{\mathcal{Z}_m}$ extends to a holomorphic map $$\Phi_{\mathcal{Z}_m''} :\, \mathcal{Z}_m'' \to D/\Gamma$$ which is proper.
In fact, as proved in Theorem 3.1 of \cite{sz}, which follows directly from Propositions 9.10 and 9.11 of \cite{Griffiths3}, $\bar{\mathcal{Z}}_{m}\backslash \Z''$ consists of the components of divisors in $\bar{\mathcal{Z}}_{m}$ whose Picard-Lefschetz transformations are of infinite order, and therefore
$\Z''$ is a Zariski open submanifold in $\bar{\mathcal{Z}}_{m}$.
Hence by the definition of $\Z'$, we know that $\Z''\subseteq \Z'$.

Conversely, for any point $q\in \mathcal{Z}_m'$ with image $u=\Phi_{\mathcal{Z}'_m}(q)\in D/\Gamma$, we can choose the points $q_{k}\in \mathcal{Z}_m$, $k=1,2,\cdots$ such that $q_{k}\longrightarrow q$ with images $u_{k}=\Phi_{\mathcal{Z}_m}(q_{k})\longrightarrow u$ as $k\longrightarrow \infty$. Since $\Phi_{\mathcal{Z}_m''}:\, \mathcal{Z}_m''\to D/\Gamma$ is proper, the sequence $$\{q_{k}\}_{k=1}^{\infty}\subset (\Phi_{\mathcal{Z}_m''})^{-1}(\{u_{k}\}_{k=1}^{\infty})$$ has a limit point $q$ in $\mathcal{Z}_m''$, that is to say $q\in \mathcal{Z}_m''$ and $\mathcal{Z}_m'\subseteq \mathcal{Z}_m''$.

Therefore we have proved that $\mathcal{Z}_m'=\mathcal{Z}_m''$ and $\mathcal{Z}_m'\setminus \mathcal{Z}_m$ consists of the points around which the Picard-Lefschetz transformations are trivial. From Theorem (9.5) in \cite{Griffiths3}, we get that the extended period map $$\Phi_{\mathcal{Z}'_m}:\, \mathcal{Z}_m'\to D/\Gamma$$ is a proper holomorphic mapping, which is called Griffiths extension of $\Phi_{\mathcal{Z}_m}$ by Sommese in \cite{Sommese}. Now we only need to prove that $\mathcal{Z}_m'=\mathcal{Z}_m^H$.

Since we already have the extension $$\Phi_{\mathcal{Z}_m}^{H}:\, \mathcal{Z}_m^H\to D/\Gamma,$$ we see that $\mathcal{Z}_m^H\subseteq \mathcal{Z}_m'$ by the definition of $\mathcal{Z}_m'$. Conversely the points in $\mathcal{Z}_m'\setminus \mathcal{Z}_m$ are mapped into $D/\Gamma$, hence have finite distance from some fixed point in $\mathcal{Z}_m$ with respect to the Hodge metric. Therefore $\mathcal{Z}_m'\subseteq \mathcal{Z}_m^H$.
\end{proof}

%

Let $\mathcal{T}^{H}_{m}$ be the universal cover of $\mathcal{Z}^H_{m}$ with the universal covering map $$\pi_{m}^H:\, \mathcal{T}^{H}_{m}\rightarrow \mathcal{Z}_m^H.$$ Thus $\mathcal{T}^H_{m}$ is a connected and simply connected complete complex manifold with respect to the Hodge metric. We will call $\mathcal{T}^H_{m}$ the {Hodge metric completion space with level $m$ structure}.
Recall that the Teichm\"uller space $\mathcal{T}$ is the universal cover of the moduli space $\mathcal{Z}_m$ with the universal covering map denoted by $\pi_m:\, \mathcal{T}\to \mathcal{Z}_m$.
Thus we have the following commutative diagram
\begin{align}\label{cover maps} \xymatrix{\mathcal{T}\ar[r]^{i_{m}}\ar[d]^{\pi_m}&\mathcal{T}^H_{m}\ar[d]^{\pi_{m}^H}\ar[r]^{{\Phi}^{H}_{m}}&D\ar[d]^{\pi_D}\\
\mathcal{Z}_m\ar[r]^{i}&\mathcal{Z}^H_{m}\ar[r]^{{\Phi}_{{\mathcal{Z}_m}}^H}&D/\Gamma,
}
\end{align}
where $i$ is the inclusion map, ${i}_{m}$ is a lifting map of $i\circ\pi_m$, $\pi_D$ is the covering map and ${\Phi}^{H}_{m}$ is a lifting map of ${\Phi}_{{\mathcal{Z}_m}}^H\circ \pi_{m}^H$. In particular, $\Phi^H_{m}$ is a continuous map from $\mathcal{T}^H_{m}$ to $D$.

We notice that the lifting maps ${i}_{{\mathcal{T}}}$ and ${\Phi}^H_{m}$ are not unique, but Lemma \ref{choice} in the appendix of this paper implies that there exist suitable choices of $i_{m}$ and $\Phi_{m}^H$ such that $\Phi=\Phi^H_{m}\circ i_{m}$. We will fix the choices of $i_{m}$ and $\Phi^{H}_m$ such that $\Phi=\Phi^H_{m}\circ i_m$ in the rest of the paper. Without confusion of notations, we denote $\mathcal{T}_m:=i_{m}(\mathcal{T})$ and the restriction map $\Phi_m=\Phi^H_{m}|_{\mathcal{T}_m}$. Then we also have $\Phi=\Phi_m\circ i_m$.
\begin{proposition}\label{opend}The image $\mathcal{T}_m$ equals to the preimage $(\pi^H_{m})^{-1}(\mathcal{Z}_{m})$, and $i_m: \, \T \to \T_m$ is a covering map.
\end{proposition}
\begin{proof}
From diagram (\ref{cover maps}), we have that $\pi^H_{m}(i_m(\mathcal{T}))=i(\pi_m(\mathcal{T}))=\mathcal{Z}_{m}.$
Therefore, $\mathcal{T}_m=i_m(\mathcal{T})\subseteq (\pi^H_{m})^{-1}(\mathcal{Z}_{m})$. For the other direction, we need to show that for any point $q\in (\pi^H_{m})^{-1}(\mathcal{Z}_{m})\subseteq\mathcal{T}^H_{m}$, we have $$q\in i_m(\mathcal{T})=\mathcal{T}_m.$$

Let $p=\pi^H_{m}(q)\in \mathcal{Z}_{m}$, Let $x_1\in \pi_m^{-1}(p)\subseteq \mathcal{T}$ be an arbitrary point, then $\pi^H_{m}(i_m(x_1))=i(\pi_m(x_1))=p$ and hence $i_m(x_1)\in (\pi^H_{m})^{-1}(p)\subseteq \mathcal{T}^H_{m}$.

As $\mathcal{T}^H_{m}$ is a connected complex manifold, $\mathcal{T}^H_{m}$ is path connected. Therefore, for $i_m(x_1), q\in \mathcal{T}^H_{m}$, there exists a curve $\gamma:\,[0,1]\to\mathcal{T}^H_{m}$ with $\gamma(0)=i_m(x_1)$ and $\gamma(1)=q$. Then the composition $\pi^H_{m}\circ \gamma$ gives a loop on $\mathcal{Z}^H_{m}$ with $\pi^H_{m}\circ \gamma(0)=\pi^H_{m}\circ \gamma(1)=p$. Lemma \ref{fund} in the appendix implies that there is a loop $\Gamma$ on $\mathcal{Z}_m$ with $\Gamma(0)=\Gamma(1)=p$ such that
\begin{align*}
[i\circ \Gamma]=[\pi^H_{m}\circ \gamma]\in \pi_1(\mathcal{Z}^H_{m}),
\end{align*} where $\pi_1(\mathcal{Z}^H_{m})$ denotes the fundamental group of $\mathcal{Z}^H_{m}$.

Because $\mathcal{T}$ is the universal cover of $\mathcal{Z}_m$, there is a unique lifting map $\tilde{\Gamma}:\,[0,1]\to \mathcal{T}$ with $\tilde{\Gamma}(0)=x_1$ and $\pi_m\circ\tilde{\Gamma}=\Gamma$.
Again since $\pi^H_{m}\circ i_m=i\circ \pi_m$, we have
\begin{align*}
\pi^H_{m}\circ i_m\circ \tilde{\Gamma}=i\circ \pi_m\circ\tilde{\Gamma}=i\circ\Gamma:\,[0,1]\to \mathcal{Z}_m.
\end{align*}
Therefore $[\pi^H_{m}\circ i_m\circ \tilde{\Gamma}]=[i\circ \Gamma]\in \pi_1(\mathcal{Z}_m)$, and the two curves $i_m\circ \tilde{\Gamma}$ and $\gamma$ have the same starting points $i_m\circ \tilde{\Gamma}(0)=\gamma(0)=i_m(x_1)$. Then the homotopy lifting property of the covering map $\pi^H_{m}$ implies that $i_m\circ \tilde{\Gamma}(1)=\gamma(1)=q$. Therefore, $q\in i_m(\mathcal{T})$, as needed.

To show that $i_m$ is a covering map, note that for any small enough open neighborhood $U$ in $\T_{m}$, the restricted map $$\pi_m^H|_{U} :\, U \to V=\pi_m^H(U)\subset \Z$$ is biholomorphic, and there exists a disjoint union $\cup_{i}V_{i}$ of open subsets in $\T$ such that $\cup_{i}V_{i}=(\pi_{m})^{-1}(V)$ and $\pi_{m}|_{V_{i}}:\, V_{i}\to V$ is biholomorphic. From the commutativity of the diagram \eqref{cover maps}, we have that $\cup_{i}V_{i}=(i_{m})^{-1}(U)$ and $$i_{m}|_{V_{i}}:\, V_{i}\to U$$ is biholomorphic. Therefore $i_m:\, \T \to \T_m$ is also a covering map.
\end{proof}

We remark  that as the lifts of the holomorphic maps $i$ and $\Phi_{\mathcal{Z}_m}^H$ to universal covers, both $i_m$ and ${\Phi}^{H}_{m}$ are easily seen to be holomorphic maps.   This fact can also be proved  by using Theorem 9.6 in \cite{Griffiths3} and the Riemann extension theorem.

Actually we have proved that $i_m$ is a covering map, so it is holomorphic. Another proof to show $i_m$ is holomorphic is to use the geometric structures of $\ZZ$ which we briefly describe as follows.

Proposition \ref{opend} implies that $\T_m$ is an open complex submanifold of $\T_{m}^{H}$ and $\text{codim}_{\C}(\T_{m}^{H}\setminus \T_m)\ge 1$.
Since $\Phi=\Phi^H_{m}\circ i_m$ is holomorphic, $$\Phi_{m}=\Phi_{m}^{H}|_{\T_m} : \, \T_m\to D$$ is also holomorphic. Since $\P_m$ has a continuous extension $\P^H_m$, we apply the Riemann extension theorem, for which  we only need to
show that $\TT\setminus \T_m$ is an analytic subvariety of $\TT$.
In fact, since  $\bar{\mathcal{Z}}_m\setminus\Z$ is a union of simple normal crossing divisors, from Lemma \ref{trivial monodromy}, we see that the subset $\ZZ\setminus\Z$ consists of normal crossing divisors in $\ZZ$ around which the monodromy group is trivial.  Therefore $\ZZ\setminus\Z$ is an analytic subvariety of $\ZZ$. On the other hand, from Proposition \ref{opend}, we know that  $\TT\setminus \T_m$ is the inverse image of $\ZZ\setminus \Z$ under the covering map $$\pi^{H}_m:\, \TT \to \ZZ,$$ this implies that $\TT\setminus \T_m$ is an analytic subvariety of $\TT$.

Note that the fact that $\ZZ\setminus\Z$, therefore $\TT\setminus \T_m$, is analytic subvariety is contained in Theorem 9.6 in \cite{Griffiths3}. We refer the readers to Page 156 of \cite{Griffiths3} for the related discussions.

We will call $\P^H_m:\, \T_m^H \to D$ the extended period map.

\begin{lemma}\label{extendedtransversality}
The extended period map $\Phi_m^H :\, \T_m^H \to D$ satisfies the Griffiths
transversality.
\end{lemma}
\begin{proof}
Let $\text{T}^{1,0}_hD$ be the horizontal subbundle.
Since $\Phi_m^H : \T_m^H \to D$ is a holomorphic map, the tangent map
$$d\Phi_m^H : \, \text{T}^{1,0}\T_m^H \to \text{T}^{1,0}D$$
is at least continuous. We only need to show that the image of $(d\Phi_m^H)$ is contained in the horizontal tangent bundle $\text{T}^{1,0}_hD$.

Since $\text{T}^{1,0}_hD$ is closed in $\text{T}^{1,0}D$, $(d\Phi_m^H)^{-1}(\text{T}^{1,0}_hD)$ is closed in $\text{T}^{1,0}\T_m^H$. But $\Phi_m^H|_{\T_m}$ satisfies the Griffiths transversality, i.e. $(d\Phi_m^H)^{-1}(\text{T}^{1,0}_hD)$ contains $\text{T}^{1,0}\T_m$, which is open in $\text{T}^{1,0}\T_m^H$. Hence $(d\Phi_m^H)^{-1}(\text{T}^{1,0}_hD)$ contains the closure of $\text{T}^{1,0}\T_m$, which is $\text{T}^{1,0}\T_m^H$.
\end{proof}

\section{Boundedness of the period maps}\label{boundedness of period maps}
In Section \ref{preliminary}, we review some basic properties of the period domain from Lie group and Lie algebra point of view.
We fix a base point $p\in\mathcal{T}$ and introduce the unipotent group $N_+\subseteq \check{D}$, which is biholomorphic to complex Euclidean space $\mathbb{C}^d$.
In Section \ref{boundedness of Phi}, we define $\check{\T}=\Phi^{-1}(N_+\cap D)$, and consider, in terms of the notations of Lie algebras in Section \ref{preliminary},  $$\mathfrak{p}_+=\mathfrak{p}/(\mathfrak{p}\cap
\mathfrak{b})=\mathfrak{p}\cap\mathfrak{n}_+ \subseteq
\mathfrak{n}_+$$
and $\text{exp}(\mathfrak{p}_+)\subseteq N_+$ as complex Euclidean subspaces. 

Let  $$P_+ :\, N_+\cap D \to \text{exp}(\mathfrak{p}_+)\cap D$$ be the induced projection
map and $\P_+=P_+ \circ \Phi|_{\check{\T}}$.  We first prove that the image of $$\P_+ :\, \check{\T} \to \text{exp}(\mathfrak{p}_+)\cap
D$$ is bounded in $\text{exp}(\mathfrak{p}_+)$ with respect to the Euclidean metric on $\text{exp}(\mathfrak{p}_+)\subseteq N_+$. In fact we actually prove that $\text{exp}(\mathfrak{p}_+)\cap
D$ is bounded in $\text{exp}(\mathfrak{p}_+)$. Then we prove the boundedness of the image of $$\Phi:\, \check{\T} \to N_{+}\cap D$$ in $N_+$ by proving the finiteness of the map $P_+|_{\Phi(\check{\T})}$.
Finally we prove that $\T\setminus \check{\T}$ is an analytic subvariety of $\T$ with $\mbox{codim}_\C (\T\setminus \check{\T})\ge 1$, and apply the Riemann extension theorem to get the boundedness of the image of $$\Phi:\, {\T} \to N_{+}\cap D$$ in $N_+.$

As mentioned in the introduction, when the images of these maps are bounded sets in the corresponding complex  Euclidean spaces, we will simply say that these maps are bounded.

\subsection{Period domain from Lie algebras and Lie groups}\label{preliminary}Let us briefly recall some properties of the period domain from Lie group and Lie algebra point of view. All of the results in this section is well-known to the experts in the subject. The purpose to give details is to fix notations. One may either skip this section or refer to \cite{GS} and \cite{schmid1} for most of the details.

The orthogonal group of the bilinear form $Q$ in the definition of Hodge structure is a linear algebraic group, defined over $\mathbb{Q}$. Let us simply denote $H_{\mathbb{C}}=H_{pr}^n(M, \mathbb{C})$ and $H_{\mathbb{R}}=H_{pr}^n(M, \mathbb{R})$. The group over ${\mathbb C}$ is
\begin{align*}
G_{\mathbb{C}}=\{ g\in GL(H_{\mathbb{C}})|~ Q(gu, gv)=Q(u, v) \text{ for all } u, v\in H_{\mathbb{C}}\},
\end{align*}
which acts on $\check{D}$ transitively. The group of real points in $G_{\mathbb{C}}$ is
\begin{align*}
G_{\mathbb{R}}=\{ g\in GL(H_{\mathbb{R}})|~ Q(gu, gv)=Q(u, v) \text{ for all } u, v\in H_{\mathbb{R}}\},
\end{align*}
which acts transitively on $D$ as well.

Consider the period map $\Phi:\, \mathcal{T}\rightarrow D$. Fix a point $p\in \mathcal{T}$ with the image $$o:=\Phi(p)=\{F^n_p\subset \cdots\subset F^{0}_p\}\in D.$$The points $p\in \mathcal{T}$ and $o\in D$ will be referred as the base points or the reference points. A linear transformation $g\in G_{\mathbb{C}}$ preserves the base point if and only if $gF^k_p=F^k_p$ for each $k$. Thus it gives the identification
\begin{align*}
\check{D}\simeq G_\mathbb{C}/B\quad\text{with}\quad B=\{ g\in G_\mathbb{C}|~ gF^k_p=F^k_p, \text{ for any } k\}.
\end{align*}
Similarly, one obtains an analogous identification
\begin{align*}
D\simeq G_\mathbb{R}/V\hookrightarrow \check{D}\quad\text{with}\quad V=G_\mathbb{R}\cap B,
\end{align*}
where the embedding corresponds to the inclusion $$
G_\mathbb{R}/V=G_{\mathbb{R}}/G_\mathbb{R}\cap B\subseteq G_\mathbb{C}/B.$$

The Lie algebra $\mathfrak{g}$ of the complex Lie group $G_{\mathbb{C}}$ can be described as
\begin{align*}
\mathfrak{g}&=\{X\in \text{End}(H_\mathbb{C})|~ Q(Xu, v)+Q(u, Xv)=0, \text{ for all } u, v\in H_\mathbb{C}\}.
\end{align*}
It is a simple complex Lie algebra, which contains
$\mathfrak{g}_0=\{X\in \mathfrak{g}|~ XH_{\mathbb{R}}\subseteq H_\mathbb{R}\}$
as a real form, i.e. $\mathfrak{g}=\mathfrak{g}_0\oplus i \mathfrak{g}_0.$ With the inclusion $G_{\mathbb{R}}\subseteq G_{\mathbb{C}}$, $\mathfrak{g}_0$ becomes the Lie algebra of $G_{\mathbb{R}}$. One observes that the reference Hodge structure $\{H^{k, n-k}_p\}_{k=0}^n$ of $H^n(M,{\mathbb{C}})$ induces a Hodge structure of weight zero on
$\mathfrak{g}$, namely,
\begin{align*}
\mathfrak{g}=\bigoplus_{k\in \mathbb{Z}} \mathfrak{g}^{k, -k}\quad\text{with}\quad\mathfrak{g}^{k, -k}=\{X\in \mathfrak{g}|XH_p^{r, n-r}\subseteq H_p^{r+k, n-r-k}\}.
\end{align*}

Since the Lie algebra $\mathfrak{b}$ of $B$ consists of those $X\in \mathfrak{g}$ that preserves the reference Hodge filtration $\{F_p^n\subset\cdots\subset F^0_p\}$, one thus has
\begin{align*}
 \mathfrak{b}=\bigoplus_{k\geq 0} \mathfrak{g}^{k, -k}.
\end{align*}
The Lie algebra $\mathfrak{v}_0$ of $V$ is
$$\mathfrak{v}_0=\mathfrak{g}_0\cap \mathfrak{b}=\mathfrak{g}_0\cap \mathfrak{b}\cap\bar{\mathfrak{b}}=\mathfrak{g}_0\cap \mathfrak{g}^{0, 0}.$$
With the above isomorphisms, the holomorphic tangent space of $\check{D}$ at the base point is naturally isomorphic to $\mathfrak{g}/\mathfrak{b}$.

Let us consider the nilpotent Lie subalgebra $$\mathfrak{n}_+:=\oplus_{k\geq 1}\mathfrak{g}^{-k,k}.$$ Then one gets the holomorphic isomorphism $\mathfrak{g}/\mathfrak{b}\cong \mathfrak{n}_+$. We take the unipotent group $$N_+=\exp(\mathfrak{n}_+).$$

As $\text{Ad}(g)(\mathfrak{g}^{k, -k})$ is in  $\bigoplus_{i\geq
k}\mathfrak{g}^{i, -i} \text{ for each } g\in B,$ the subspace
$\mathfrak{b}\oplus\linebreak \mathfrak{g}^{-1, 1}/\mathfrak{b}\subseteq
\mathfrak{g}/\mathfrak{b}$ defines an Ad$(B)$-invariant subspace. By
left translation via $G_{\mathbb{C}}$,
$\mathfrak{b}\oplus\mathfrak{g}^{-1,1}/\mathfrak{b}$ gives rise to a
$G_{\mathbb{C}}$-invariant holomorphic subbundle of the holomorphic
tangent bundle. It will be denoted by $\text{T}^{1,0}_{h}\check{D}$,
and will be referred to as the horizontal tangent subbundle. One can
check that this construction does not depend on the choice of the
base point. The horizontal tangent subbundle, restricted to $D$,
determines a subbundle $\text{T}_{h}^{1, 0}D$ of the holomorphic
tangent bundle $\text{T}^{1, 0}D$ of $D$. 

The
$G_{\mathbb{C}}$-invariance of $\text{T}^{1, 0}_{h}\check{D}$
implies the $G_{\mathbb{R}}$-invariance of $\text{T}^{1, 0}_{h}D$.
Note that the horizontal tangent subbundle $\text{T}_{h}^{1, 0}D$
can also be constructed as the associated bundle of the principle
bundle $V\to G_\mathbb{R} \to D$ with the adjoint representation of
$V$ on the space
$\mathfrak{b}\oplus\mathfrak{g}^{-1,1}/\mathfrak{b}$. 

 As another
interpretation of the horizontal bundle in terms of the Hodge
bundles ${F}^{k}\to \check{D}$, $0\le k \le n$, one has
\begin{align}\label{horizontal}
\text{T}^{1, 0}_{h}\check{D}\simeq \text{T}^{1, 0}\check{D}\cap \bigoplus_{k=1}^{n}\text{Hom}({F}^{k}/{F}^{k+1}, {F}^{k-1}/{F}^{k}).
\end{align}

A holomorphic mapp $\Psi: \,M\rightarrow \check{D}$ of a complex manifold $M$ into $\check{D}$ is called {horizontal}, if the tangent map $$d\Psi:\, \text{T}^{1,0}M \to \text{T}^{1,0}\check{D}$$ takes values in $\text{T}^{1,0}_h\check{D}$.
The period map $\Phi: \, \T\rightarrow D$ is horizontal due to Griffiths transversality.

Let us introduce the notion of an adapted basis for the given Hodge decomposition or the Hodge filtration.
For any $p\in \mathcal{T}$ and $f^k=\dim F^k_p$ for any $0\leq k\leq n$, we call a basis $$\xi=\left\{ \xi_0, \xi_1, \dots, \xi_N, \dots, \xi_{f^{k+1}}, \dots, \xi_{f^k-1}, \dots,\xi_{f^{2}}, \dots, \xi_{f^{1}-1}, \xi_{f^{0}-1} \right\}$$ of $H^n(M_p, \mathbb{C})$ an \textit{adapted basis for the given Hodge decomposition}
$$H^n(M_p, {\mathbb{C}})=H^{n, 0}_p\oplus H^{n-1, 1}_p\oplus\cdots \oplus H^{1, n-1}_p\oplus H^{0, n}_p, $$
if it satisfies $
H^{k, n-k}_p=\text{Span}_{\mathbb{C}}\left\{\xi_{f^{k+1}}, \dots, \xi_{f^k-1}\right\}$ with $\dim H^{k,n-k}_p=f^k-\linebreak f^{k+1}$.

We call a basis
\begin{align*}
\zeta=\{\zeta_0, \zeta_1, \dots, \zeta_N, \dots, \zeta_{f^{k+1}}, \dots, \zeta_{f^k-1}, \dots, \zeta_{f^2}, \dots, \zeta_{f^1-0}, \zeta_{f^0-1}\}
\end{align*}
of $H^n(M_p, \mathbb{C})$ an \textit{adapted basis for the given Hodge filtration}
\begin{align*}
F^n\subseteq F^{n-1}\subseteq\cdots\subseteq F^0
\end{align*}
if it satisfies $$F^{k}=\text{Span}_{\mathbb{C}}\{\zeta_0, \dots, \zeta_{f^k-1}\}$$ with $\text{dim}_{\mathbb{C}}F^{k}=f^k$.
Moreover, unless otherwise pointed out, the matrices in this paper are $m\times m$ matrices, where $m=f^0$. The blocks of the $m\times m$ matrix $T$ is set as follows:
for each $0\leq \alpha, \beta\leq n$, the $(\alpha, \beta)$-th block $T^{\alpha, \beta}$ is
\begin{align}\label{block}
T^{\alpha, \beta}=\left[T_{ij}(\tau)\right]_{f^{-\alpha+n+1}\leq i \leq f^{-\alpha+n}-1, \ f^{-\beta+n+1}\leq j\leq f^{-\beta+n}-1},
\end{align} where $T_{ij}$ is the entries of
the matrix $T$, and $f^{n+1}$ is defined to be zero. In particular, $T =[T^{\alpha,\beta}]$ is called a \textit{block lower triangular matrix} if
$T^{\alpha,\beta}=0$ whenever $\alpha<\beta$.
\begin{remark}\label{identify n+}
We remark that by fixing a base point, we can identify the above quotient Lie groups or Lie algebras with their orbits in the corresponding quotient Lie algebras or Lie groups. For example, $\mathfrak{n}_+\cong \mathfrak{g}/\mathfrak{b}$, $$\mathfrak{g}^{-1,1}\cong\mathfrak{b}\oplus \mathfrak{g}^{-1,1}/\mathfrak{b},$$ and $N_+\cong N_+B/B\subseteq \check{D}$, since $N_{+}\cap B=\{Id\}.$
We can also identify a point $$\Phi(p)=\{ F^n_p\subseteq F^{n-1}_p\subseteq \cdots \subseteq F^{0}_p\}\in D$$ with its Hodge decomposition $\bigoplus_{k=0}^n H^{k, n-k}_p$.

 With any fixed adapted basis of the corresponding Hodge decomposition for the base point, we have matrix representations of the elements in the above Lie groups and Lie algebras. For example, the elements in $N_+$ can be realized as nonsingular block lower triangular matrices with identity blocks in the diagonal; elements in $B$ can be realized as nonsingular block upper triangular matrices.
\end{remark}

              We shall review and collect some facts about the structure of simple Lie algebra $\mathfrak{g}$ in our case. Again one may refer to \cite{GS} and \cite{schmid1} for more details. Let $\theta: \,\mathfrak{g}\rightarrow \mathfrak{g}$ be the Weil operator, which is defined by
\begin{align*}\theta(X)=(-1)^p X\quad \text{ for } X\in \mathfrak{g}^{p,-p}.
\end{align*}
Then $\theta$ is an involutive automorphism of $\mathfrak{g}$, and is defined over $\mathbb{R}$. The $(+1)$ and $(-1)$ eigenspaces of $\theta$ will be denoted by $\mathfrak{k}$ and $\mathfrak{p}$ respectively. Moreover, set
\begin{align*}
\mathfrak{k}_0=\mathfrak{k}\cap \mathfrak{g}_0, \quad \mathfrak{p}_0=\mathfrak{p}\cap \mathfrak{g}_0.
\end{align*}The fact that $\theta$ is an involutive automorphism implies
\begin{align*}
\mathfrak{g}=\mathfrak{k}\oplus\mathfrak{p}, \quad \mathfrak{g}_0=\mathfrak{k}_0\oplus \mathfrak{p}_0, \quad
[\mathfrak{k}, \,\mathfrak{k}]\subseteq \mathfrak{k}, \quad [\mathfrak{p},\,\mathfrak{p}]\subseteq\mathfrak{k}, \quad [\mathfrak{k}, \,\mathfrak{p}]\subseteq \mathfrak{p}.
\end{align*}

Let us consider
$\mathfrak{g}_c=\mathfrak{k}_0\oplus \sqrt{-1}\mathfrak{p}_0.$
Then $\mathfrak{g}_c$ is a real form for $\mathfrak{g}$.
Recall that the killing form $B(\cdot, \,\cdot)$ on $\mathfrak{g}$ is defined by
\begin{align*}B(X,Y)=\text{Trace}(\text{ad}(X)\circ\text{ad}(Y)) \quad \text{for } X,Y\in \mathfrak{g}.
\end{align*}
A semisimple Lie algebra is compact if and only if its Killing form is negative definite. Thus it is not hard to check that $\mathfrak{g}_c$ is actually a compact real form of $\mathfrak{g}$, while $\mathfrak{g}_0$ is a noncompact real form.
Recall that $G_{\mathbb{R}}\subseteq G_{\mathbb{C}}$ is the subgroup which correpsonds to the subalgebra $\mathfrak{g}_0\subseteq\mathfrak{g}$. Let us denote the connected subgroup $G_c\subseteq G_{\mathbb{C}}$ which corresponds to the subalgebra $\mathfrak{g}_c\subseteq\mathfrak{g}$. Let us denote the complex conjugation of $\mathfrak{g}$ with respect to the compact real form $\mathfrak{g}_{c}$ by $\tau_c$, and the complex conjugation of $\mathfrak{g}$ with respect to the noncompact real form $\mathfrak{g}_{0}$ by $\tau_0$.

The intersection $K=G_c\cap G_{\mathbb{R}}$ is then a compact subgroup of $\mathbb{G}_{\mathbb{R}}$, whose Lie algebra is $\mathfrak{k}_0=\mathfrak{g}_{\mathbb{R}}\cap \mathfrak{g}_c$.
With the above notations, Schmid showed in \cite{schmid1} that $K$ is a maximal compact subgroup of $G_{\mathbb{R}}$, and it meets every connected component of $G_{\mathbb{R}}$. Moreover, $V=G_{\mathbb{R}}\cap B\subseteq K$.

We know that in our cases, $G_{\mathbb{C}}$ is a connected simple Lie group, $B$ is a parabolic subgroup in $G_{\mathbb{C}}$ with $\mathfrak{b}$ as its Lie subalgebra.
The Lie algebra $\mathfrak{b}$ has a unique maximal nilpotent ideal $\mathfrak{n}_-$. It is not hard to see that
\begin{align*}\mathfrak{g}_c\cap \mathfrak{n}_-=\mathfrak{n}_-\cap \tau_c(\mathfrak{n}_-)=0.
\end{align*}
By using Bruhat's lemma, one concludes $\mathfrak{g}$ is spanned by the parabolic subalgebras $\mathfrak{b}$ and $\tau_c(\mathfrak{b})$. Moreover $\mathfrak{v}=\mathfrak{b}\cap \tau_c(\mathfrak{b})$, $\mathfrak{b}=\mathfrak{v}\oplus \mathfrak{n}_-$. In particular, we also have
$$\mathfrak{n}_+=\tau_c(\mathfrak{n}_-).$$

As remarked in $\S1$ in \cite{GS} of Griffiths and Schmid, one gets that $\mathfrak{v}$ must have the same rank as $\mathfrak{g}$, as $\mathfrak{v}$ is  the intersection of the two parabolic subalgebras $\mathfrak{b}$ and $\tau_c(\mathfrak{b})$. Moreover, $\mathfrak{g}_0$ and $\mathfrak{v}_0$ are also of equal rank, since they are real forms of $\mathfrak{g}$ and $\mathfrak{v}$ respectively. Therefore, we can choose a Cartan subalgebra $\mathfrak{h}_0$ of $\mathfrak{g}_0$ such that $\mathfrak{h}_0\subseteq \mathfrak{v}_0$ is also a Cartan subalgebra of $\mathfrak{v}_0$. Since $\mathfrak{v}_0\subseteq \mathfrak{k}_0$, we also have $\mathfrak{h}_0\subseteq\mathfrak{k}_0$. A Cartan subalgebra of a real Lie algebra is a maximal abelian subalgebra. Therefore $\mathfrak{h}_0$ is also a maximal abelian subalgebra of $\mathfrak{k}_0$, hence $\mathfrak{h}_0$ is a Cartan subalgebra of $\mathfrak{k}_0$. 

Summarizing the above discussion, we get the following proposition.
\begin{proposition}\label{cartaninK}There exists a Cartan subalgebra $\mathfrak{h}_0$ of $\mathfrak{g}_0$ such that $\mathfrak{h}_0\subseteq \mathfrak{v}_0\subseteq \mathfrak{k}_0$, and $\mathfrak{h}_0$ is also a Cartan subalgebra of $\mathfrak{k}_0$. \end{proposition}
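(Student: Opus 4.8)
The plan is to derive the whole statement from a single rank equality, $\operatorname{rank}\mathfrak{g}_0=\operatorname{rank}\mathfrak{v}_0$, together with the compactness of $K$. For the rank equality, note that the subalgebra $\mathfrak{v}=\mathfrak{b}\cap\tau_c(\mathfrak{b})$ coincides with the zero-graded piece $\mathfrak{g}^{0,0}$, i.e.\ with the centralizer in $\mathfrak{g}$ of the semisimple grading element $Y\in\mathfrak{g}$ that acts as multiplication by $k$ on $\mathfrak{g}^{k,-k}$; equivalently, it is the intersection of the two parabolic subalgebras $\mathfrak{b}$ and $\tau_c(\mathfrak{b})$, and this is the fact invoked, following Bruhat's lemma, from \S1 of \cite{GS} and from \cite{schmid1}. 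In either description $\mathfrak{v}$ is reductive and contains a Cartan subalgebra of $\mathfrak{g}$, so $\operatorname{rank}\mathfrak{v}=\operatorname{rank}\mathfrak{g}$. Since $\mathfrak{g}_0$ and $\mathfrak{v}_0$ are the real forms of $\mathfrak{g}$ and $\mathfrak{v}$ cut out by the conjugation of $\mathfrak{g}$ fixing $\mathfrak{g}_0$, and since the complexification of a Cartan subalgebra of a real reductive Lie algebra is a Cartan subalgebra of its complexification, we obtain $\operatorname{rank}\mathfrak{g}_0=\operatorname{rank}\mathfrak{g}=\operatorname{rank}\mathfrak{v}=\operatorname{rank}\mathfrak{v}_0$.

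Next I would choose any Cartan subalgebra $\mathfrak{h}_0$ of $\mathfrak{v}_0$, so that $\dim_{\mathbb{R}}\mathfrak{h}_0=\operatorname{rank}\mathfrak{v}_0=\operatorname{rank}\mathfrak{g}_0$. Recall the inclusions $\mathfrak{h}_0\subseteq\mathfrak{v}_0=\mathfrak{g}_0\cap\mathfrak{g}^{0,0}\subseteq\mathfrak{k}_0$ already recorded above, and that $\mathfrak{k}_0$, being the Lie algebra of the compact group $K$, is compactly embedded in $\mathfrak{g}_0$; hence every element of $\mathfrak{k}_0$---in particular every element of $\mathfrak{h}_0$---acts $\operatorname{ad}$-semisimply, with purely imaginary eigenvalues, on all of $\mathfrak{g}_0$, and a fortiori on $\mathfrak{k}_0$. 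Thus $\mathfrak{h}_0$ is a toral subalgebra of each of $\mathfrak{v}_0$, $\mathfrak{k}_0$ and $\mathfrak{g}_0$. Likewise any Cartan subalgebra of $\mathfrak{k}_0$ is a toral subalgebra of $\mathfrak{g}_0$, so $\operatorname{rank}\mathfrak{k}_0\leq\operatorname{rank}\mathfrak{g}_0$. Combining these, $\operatorname{rank}\mathfrak{g}_0=\dim_{\mathbb{R}}\mathfrak{h}_0\leq\operatorname{rank}\mathfrak{k}_0\leq\operatorname{rank}\mathfrak{g}_0$, so all are equal and $\mathfrak{h}_0$ is a maximal toral subalgebra of each of $\mathfrak{v}_0$, $\mathfrak{k}_0$ and $\mathfrak{g}_0$, hence a Cartan subalgebra of each. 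Together with $\mathfrak{h}_0\subseteq\mathfrak{v}_0\subseteq\mathfrak{k}_0$ this is exactly the assertion.

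The only genuinely nontrivial ingredient is the structural fact used in the first paragraph: that $\mathfrak{v}$---equivalently $\mathfrak{g}^{0,0}$, equivalently $\mathfrak{b}\cap\tau_c(\mathfrak{b})$---is reductive of full rank in $\mathfrak{g}$. Once this is granted, everything else is a formal manipulation with toral subalgebras and the compactness of $K$, so I expect the main (modest) obstacle to be to state and cite this fact cleanly, essentially as the remark from \S1 of \cite{GS} quoted just before the statement, rather than to carry out any new computation.
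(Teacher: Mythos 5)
Your proof is correct and takes essentially the same route as the paper: both rest on the rank equality $\operatorname{rank}\mathfrak{v}=\operatorname{rank}\mathfrak{g}$ quoted from \S1 of \cite{GS} (with $\mathfrak{v}=\mathfrak{b}\cap\tau_c(\mathfrak{b})$ the intersection of two parabolic subalgebras), pass to the real forms to get $\operatorname{rank}\mathfrak{g}_0=\operatorname{rank}\mathfrak{v}_0$, and then use the compactness of $\mathfrak{k}_0$ to upgrade a maximal abelian (toral) subalgebra to a Cartan subalgebra of $\mathfrak{k}_0$. Starting from a Cartan subalgebra of $\mathfrak{v}_0$ and checking its torality in $\mathfrak{g}_0$ via the compact embedding, rather than directly choosing a Cartan subalgebra of $\mathfrak{g}_0$ inside $\mathfrak{v}_0$ as the paper does, is only a minor reordering of the same argument.
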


\begin{remark}As an alternate proof of Proposition \ref{cartaninK}, to show that $\mathfrak{k}_0$ and $\mathfrak{g}_0$ have equal rank, one only needs to notice that $\mathfrak{g}_0$ in our case is one of the following real simple Lie algebras: $\mathfrak{sp}(2l, \mathbb{R})$, $\mathfrak{so}(p, q)$ with $p+q$ odd, or $\mathfrak{so}(p, q)$ with $p$ and $q$ both even. One may refer to \cite{Griffiths1} and \cite{su} for more details.\end{remark}
Proposition \ref{cartaninK} implies that the simple Lie algebra $\mathfrak{g}_0$ in our case is a simple Lie algebra of first category as defined in \cite[$\S4$]{Sugi}. In the following, we will briefly derive the result of a simple Lie algebra of first category in \cite[Lemma 3]{Sugi1}. One may also refer to \cite[Lemma 2.2.12, page~141--142]{Xu} for the same result.

Let us still use the above notations of the Lie algebras we consider. By Proposition 4, we can take $\mathfrak{h}_0$ to be a Cartan subalgebra of $\mathfrak{g}$ such that $$\mathfrak{h}_0\subseteq \mathfrak{v}_0\subseteq\mathfrak{k}_0$$ and $\mathfrak{h}_0$ is also a Cartan subalgebra of $\mathfrak{k}_0$. Let us denote $\mathfrak{h}$ to be the complexification of $\mathfrak{h}_0$. Then $\mathfrak{h}$ is a Cartan subalgebra of $\mathfrak{g}$ such that $\mathfrak{h}\subseteq \mathfrak{v}\subseteq \mathfrak{k}$.

Write $\mathfrak{h}_0^*=\text{Hom}(\mathfrak{h}_0, \mathbb{R})$ and $\mathfrak{h}^*_{\mathbb{R}}=\sqrt{-1}\mathfrak{h}^*_0. $ Then $\mathfrak{h}^*_{\mathbb{R}}$ can be identified with $\mathfrak{h}_{\mathbb{R}}:=\sqrt{-1}\mathfrak{h}_0$ by the duality with respect to the restriction of the Killing form $B$ of $\mathfrak{g}$ to $\mathfrak{h}_{\mathbb{R}}$.
 Let $\rho\in\mathfrak{h}_{{\mathbb{R}}}^*\simeq \mathfrak{h}_{{\mathbb{R}}}$, one can define the following subspace of $\mathfrak{g}$
\begin{align*}
\mathfrak{g}^{\rho}=\{x\in \mathfrak{g}\mid [h, \,x]=\rho(h)x\quad\text{for all } h\in \mathfrak{h}\}.
\end{align*}
An element $\varphi\in\mathfrak{h}_{{\mathbb{R}}}^*\simeq\mathfrak{h}_{{\mathbb{R}}}$ is called a root of $\mathfrak{g}$ with respect to $\mathfrak{h}$ if $\mathfrak{g}^\varphi\neq \{0\}$.

Let $\Delta\subseteq \mathfrak{h}^{*}_{\mathbb{R}}\simeq\mathfrak{h}_{{\mathbb{R}}}$ denote the space of nonzero $\mathfrak{h}$-roots. Then each root space
\begin{align*}
\mathfrak{g}^{\varphi}=\{x\in\mathfrak{g}\mid [h,x]=\varphi(h)x \text{ for all } h\in \mathfrak{h}\}
\end{align*} belongs to some $\varphi\in \Delta$ is one-dimensional over $\mathbb{C}$, generated by a root vector~$e_{{\varphi}}$.

Since the involution $\theta$ is a Lie-algebra automorphism fixing $\mathfrak{k}$, we have $$[h, \theta(e_{{\varphi}})]=\varphi(h)\theta(e_{{\varphi}})$$ for any $h\in \mathfrak{h}$ and $\varphi\in \Delta.$ Thus $\theta(e_{{\varphi}})$ is also a root vector belonging to the root $\varphi$, so $e_{{\varphi}}$ must be an eigenvector of $\theta$. It follows that there is a decomposition of the roots $\Delta$ into $\Delta_{\mathfrak{k}}\cup\Delta_{\mathfrak{p}}$ of compact roots and noncompact roots with root spaces $\mathbb{C}e_{{\varphi}}\subseteq \mathfrak{k}$ and $\mathfrak{p}$ respectively.
The adjoint representation of $\mathfrak{h}$ on $\mathfrak{g}$ determins a decomposition
\begin{align*}
\mathfrak{g}=\mathfrak{h}\oplus \sum_{\varphi\in\Delta}\mathfrak{g}^{\varphi}.
\end{align*}There also exists a Weyl base $\{h_i, 1\leq i\leq l;\,\,e_{{\varphi}}, \text{ for any } \varphi\in\Delta\}$ with $l=\text{rank}(\mathfrak{g})$ such that
$\text{Span}_{\mathbb{C}}\{h_1, \dots, h_l\}=\mathfrak{h}$, $\text{Span}_{\mathbb{C}}\{e_{\varphi}\}=g^{\varphi}$ for each $\varphi\in \Delta$, and
\begin{align}
&\tau_c(h_i)=\tau_0(h_i)=-h_i, \quad \text{ for any }1\leq i\leq l;\nonumber \\
&\tau_c(e_{{\varphi}})=\tau_0(e_{{\varphi}})=-e_{{-\varphi}}, \quad \text{for any } \varphi\in \Delta_{\mathfrak{k}};\label{bar}\\
&\tau_0(e_{{\varphi}})=-\tau_c(e_{{\varphi}})=e_{{-\varphi}}, \quad\text{for any }\varphi\in \Delta_{\mathfrak{p}},\nonumber
\end{align}
and
\begin{align}
\mathfrak{k}_0&=\mathfrak{h}_0+\sum_{\varphi\in \Delta_{\mathfrak{k}}}\mathbb{R}(e_{{\varphi}}-e_{{-\varphi}})+\sum_{\varphi\in\Delta_{\mathfrak{k}}} \mathbb{R}\sqrt{-1}(e_{{\varphi}}+e_{{-\varphi}});\label{rootk}\\
\mathfrak{p}_0&=\sum_{\varphi\in \Delta_{\mathfrak{p}}}\mathbb{R}(e_{{\varphi}}+e_{{-\varphi}})+\sum_{\varphi\in\Delta_{\mathfrak{p}}} \mathbb{R}\sqrt{-1}(e_{{\varphi}}-e_{{-\varphi}}).\label{rootp}
\end{align}
First we have the following lemma.
\begin{lemma}\label{puretype}Let $\Delta$ be the set of $\mathfrak{h}$-roots as above. Then for each root $\varphi\in \Delta$, there is an integer $-n\leq k\leq n$ such that $e_{\varphi}\in \mathfrak{g}^{k,-k}$. In particular, if $e_{{\varphi}}\in \mathfrak{g}^{k, -k}$, then $\tau_0(e_{{\varphi}})\in \mathfrak{g}^{-k,k}$ for any $-n\leq k\leq n$.
\end{lemma}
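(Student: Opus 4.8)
The plan is to show that each Hodge piece $\mathfrak{g}^{k,-k}$ is itself a sum of weight spaces for $\mathfrak{h}$, which forces every root vector $e_\varphi$ to lie entirely inside a single $\mathfrak{g}^{k,-k}$. The key point is that $\mathfrak{h}$ is a Cartan subalgebra sitting inside $\mathfrak{v}=\mathfrak{b}\cap\tau_c(\mathfrak{b})$, hence inside $\mathfrak{g}^{0,0}$ by the description $\mathfrak{v}=\mathfrak{g}\cap\mathfrak{g}^{0,0}$ (more precisely $\mathfrak{v}_0=\mathfrak{g}_0\cap\mathfrak{g}^{0,0}$, so its complexification $\mathfrak{h}$ lies in $\mathfrak{g}^{0,0}$). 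First I would record that the Hodge decomposition of weight zero on $\mathfrak{g}=\bigoplus_k\mathfrak{g}^{k,-k}$ is a decomposition into $\mathrm{ad}(\mathfrak{h})$-invariant subspaces: indeed for $h\in\mathfrak{h}\subseteq\mathfrak{g}^{0,0}$ and $X\in\mathfrak{g}^{k,-k}$ one has $[h,X]\in[\mathfrak{g}^{0,0},\mathfrak{g}^{k,-k}]\subseteq\mathfrak{g}^{k,-k}$ by the bracket rule $[\mathfrak{g}^{p,-p},\mathfrak{g}^{q,-q}]\subseteq\mathfrak{g}^{p+q,-p-q}$ for a weight-zero Hodge structure on a Lie algebra.

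Next I would use the standard fact that if a semisimple (here reductive suffices) subalgebra — in our case the abelian $\mathfrak{h}$ — acts on $\mathfrak{g}$ preserving a subspace $W$, then $W$ is the direct sum of its intersections with the root spaces and the Cartan: $W=(W\cap\mathfrak{h})\oplus\bigoplus_{\varphi\in\Delta}(W\cap\mathfrak{g}^\varphi)$. Applying this with $W=\mathfrak{g}^{k,-k}$ and using that each $\mathfrak{g}^\varphi$ is one-dimensional, we get that for each $\varphi\in\Delta$ the line $\mathfrak{g}^\varphi$ is contained in $\mathfrak{g}^{k,-k}$ for exactly one $k$ (and $\mathfrak{h}\subseteq\mathfrak{g}^{0,0}$ as already noted). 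Concretely, writing $e_\varphi=\sum_k e_\varphi^{(k)}$ with $e_\varphi^{(k)}\in\mathfrak{g}^{k,-k}$, the relation $[h,e_\varphi]=\varphi(h)e_\varphi$ together with the $\mathrm{ad}(\mathfrak{h})$-invariance of each $\mathfrak{g}^{k,-k}$ shows each nonzero component $e_\varphi^{(k)}$ is itself a root vector for $\varphi$; since $\dim\mathfrak{g}^\varphi=1$, only one component can be nonzero, so $e_\varphi\in\mathfrak{g}^{k,-k}$ for a unique $-n\le k\le n$ (the range of indices being exactly $[-n,n]$ since the Hodge structure has weight $n$).

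For the last assertion, suppose $e_\varphi\in\mathfrak{g}^{k,-k}$. I would invoke the established compatibility of $\tau_0$ with the Hodge structure: $\tau_0$ is the complex conjugation of $\mathfrak{g}$ with respect to $\mathfrak{g}_0=\{X:XH_{\mathbb R}\subseteq H_{\mathbb R}\}$, and conjugating the defining condition $X H_p^{r,n-r}\subseteq H_p^{r+k,n-r-k}$ and using $\overline{H_p^{a,b}}=H_p^{b,a}$ gives $\tau_0(X)H_p^{r,n-r}\subseteq H_p^{r-k,n-r+k}$, i.e. $\tau_0(\mathfrak{g}^{k,-k})=\mathfrak{g}^{-k,k}$. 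Since $\tau_0(e_\varphi)$ is (up to scalar) the root vector $e_{-\varphi}$, this shows $e_{-\varphi}\in\mathfrak{g}^{-k,k}$ and hence $\tau_0(e_\varphi)\in\mathfrak{g}^{-k,k}$, as claimed. I do not expect a genuine obstacle here; the one place to be careful is the clean bookkeeping that $\mathfrak{h}\subseteq\mathfrak{g}^{0,0}$ (which is exactly what Proposition \ref{cartaninK} and the identity $\mathfrak{v}_0=\mathfrak{g}_0\cap\mathfrak{g}^{0,0}$ buy us) — without that, the Hodge pieces need not be $\mathrm{ad}(\mathfrak{h})$-stable and the statement fails.
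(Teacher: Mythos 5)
Your proposal is correct and follows essentially the same route as the paper: decompose $e_\varphi$ into its Hodge components, use $\mathfrak{h}\subseteq\mathfrak{v}\subseteq\mathfrak{g}^{0,0}$ to see each $\mathfrak{g}^{k,-k}$ is $\mathrm{ad}(\mathfrak{h})$-stable, conclude each nonzero component is a $\varphi$-root vector, and invoke $\dim\mathfrak{g}^\varphi=1$. You additionally spell out the second assertion (that $\tau_0$ swaps $\mathfrak{g}^{k,-k}$ and $\mathfrak{g}^{-k,k}$), which the paper leaves implicit; your argument for it is correct.
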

\begin{proof}
Let $\varphi$ be a root, and $e_{\varphi}$ be the generator of the root space $\mathfrak{g}^{\varphi}$, then $e_{\varphi}=\sum_{k=-n}^n e^{-k,k}$, where $e^{-k,k}\in \mathfrak{g}^{-k,k}$. Because $\mathfrak{h}\subseteq \mathfrak{v}\subseteq \mathfrak{g}^{0,0}$, the Lie bracket $[e^{-k,k}, h]\in \mathfrak{g}^{-k,k}$ for each $k$. Then the condition $[e_{\varphi}, h]=\varphi (h)e_{\varphi}$ implies that
\begin{align*}
\sum_{k=-n}^n[e^{-k,k},h]=\sum_{k=-n}^n\varphi(h)e^{-k,k}\quad \text{ for each } h\in \mathfrak{h}.
\end{align*}
By comparing the type, we get
\begin{align*} [e^{-k,k},h]=\varphi(h)e^{-k,k}\quad \text{ for each } h\in \mathfrak{h}.
\end{align*}
Therefore $e^{-k,k}\in g^{\varphi}$ for each $k$. As $\{e^{-k,k}, \}_{k=-n}^n$ forms a linear independent set, but $g^{\varphi}$ is one dimensional, thus there is only one $k$ with $-n\leq k\leq n$ and $e^{-k,k}\neq 0$.
\end{proof}

Let us now introduce a lexicographic order (\cite[page 41]{Xu} or \cite[page 416]{Sugi}) in the real vector space $\mathfrak{h}_{{\mathbb{R}}}$ as follows: we fix an ordered basis $e_1, \dots, e_l$ for $\mathfrak{h}_{{\mathbb{R}}}$. Then for any $h=\sum_{i=1}^l\lambda_ie_i\in\mathfrak{h}_{{\mathbb{R}}}$, we call $h>0$ if the first nonzero coefficient is positive, that is, if $$\lambda_1=\cdots=\lambda_k=0, \  \lambda_{k+1}>0$$ for some $1\leq k<l$. For any $h, h'\in\mathfrak{h}_{{\mathbb{R}}}$, we say $h>h'$ if $h-h'>0$, $h<h'$ if $h-h'<0$ and $h=h'$ if $h-h'=0$.
In particular, let us identify the dual spaces $\mathfrak{h}_{{\mathbb{R}}}^*$ and $\mathfrak{h}_{{\mathbb{R}}}$, thus $\Delta\subseteq \mathfrak{h}_{{\mathbb{R}}}$. 

Let us first choose a maximal linearly independent subset $\{e_1, \dots, e_s\}$ of $\Delta_{\mathfrak{p}}$, then a maximal linearly independent subset $\{e_{s+1}, \dots, e_{l}\}$ of $\Delta_{\mathfrak{k}}$. Then $\{e_1, \dots, e_s, e_{s+1}, \dots, e_l\}$ forms a basis for $\mathfrak{h}_{{\mathbb{R}}}^*$ since $\text{Span}_{\mathbb{R}}\Delta=\mathfrak{h}_{{\mathbb{R}}}^*$. We define the above lexicographic order in $\mathfrak{h}_{{\mathbb{R}}}^*\simeq \mathfrak{h}_{{\mathbb{R}}}$ by using the ordered basis $\{e_1,\dots, e_l\}$. In this way, we also define
\begin{align*}
\Delta^+=\{\varphi>0: \,\varphi\in \Delta\};\quad \Delta_{\mathfrak{p}}^+=\Delta^+\cap \Delta_{\mathfrak{p}}.
\end{align*}
Similarly we define $\Delta^-$, $\Delta^-_{\mathfrak{p}}$, $\Delta^{+}_{\mathfrak{k}}$, and $\Delta^-_{\mathfrak{k}}$. Then one can conclude the following lemma from Lemma 2.2.10 and Lemma 2.2.11 at pp.141 in \cite{Xu},
\begin{lemma}\label{RootSumNonRoot}Using the above notation, we have
\begin{align*}(\Delta_{\mathfrak{k}}+\Delta^{\pm}_{\mathfrak{p}})\cap \Delta\subseteq\Delta_{\mathfrak{p}}^{\pm}; \quad (\Delta_{\mathfrak{p}}^{\pm}+\Delta_{\mathfrak{p}}^{\pm})\cap\Delta=\emptyset.
\end{align*}
If one defines $$\mathfrak{p}^{\pm}=\sum_{\varphi\in\Delta^{\pm}_{\mathfrak{p}}} \mathfrak{g}^{\varphi}\subseteq\mathfrak{p},$$ then $\mathfrak{p}=\mathfrak{p}^+\oplus\mathfrak{p}^-$ and $
[\mathfrak{p}^{\pm}, \, \mathfrak{p}^{\pm}]=0,$ $ [\mathfrak{p}^+, \,\mathfrak{p}^-]\subseteq\mathfrak{k}$, $[\mathfrak{k}, \,\mathfrak{p}^{\pm}]\subseteq\mathfrak{p}^{\pm}. $
\end{lemma}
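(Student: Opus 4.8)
The plan is to deduce both displayed inclusions from the grading $\mathfrak{g}=\bigoplus_k\mathfrak{g}^{k,-k}$ together with Lemma \ref{puretype}, which tells us that every root vector $e_\varphi$ is homogeneous of pure type, say $e_\varphi\in\mathfrak{g}^{k(\varphi),-k(\varphi)}$, and that $e_\varphi\in\mathfrak{g}^{0,0}$ exactly when $\varphi$ is compact (since $\mathfrak{g}^{0,0}\subseteq\mathfrak{k}$ and $\mathfrak{p}=\bigoplus_{k\neq 0}(\mathfrak{p}\cap\mathfrak{g}^{k,-k})$; more precisely one should first record that $\Delta_{\mathfrak{k}}$ corresponds to even-type roots and $\Delta_{\mathfrak{p}}$ to odd-type roots, using that $\theta$ acts by $(-1)^k$ on $\mathfrak{g}^{k,-k}$). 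First I would set up this dictionary between the $\pm$-sign decomposition coming from the lexicographic order and the type decomposition: the key point is that $\mathfrak{p}^{\pm}$, defined via $\Delta^{\pm}_{\mathfrak p}$, should be shown to coincide with the span of the positive-type (resp.\ negative-type) root spaces inside $\mathfrak p$. This is where one must be slightly careful, and it is the step I expect to be the main obstacle — one cannot literally identify ``positive root'' with ``positive type'' (there are many positive roots of each type), so instead I would argue that $[\mathfrak{p}^+,\mathfrak{p}^+]=0$ forces $\mathfrak{p}^+$ to sit inside a single type component, and then fix the sign convention so that $\mathfrak p^+\subseteq\mathfrak g^{-1,1}$-ish graded pieces with positive $k$; equivalently, invoke the structure of $\mathfrak g_0$ as a simple Lie algebra of first category (noted after Proposition \ref{cartaninK}) where this correspondence is standard.

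Granting the dictionary, the two inclusions in the first display are then immediate from the grading. For $(\Delta_{\mathfrak k}+\Delta^{\pm}_{\mathfrak p})\cap\Delta\subseteq\Delta^{\pm}_{\mathfrak p}$: if $\varphi\in\Delta_{\mathfrak k}$ and $\psi\in\Delta^{\pm}_{\mathfrak p}$ and $\varphi+\psi\in\Delta$, then $e_{\varphi+\psi}$ is a nonzero multiple of $[e_\varphi,e_\psi]\in[\mathfrak k,\mathfrak p^{\pm}]$; since $[\mathfrak k,\mathfrak g^{k,-k}]\subseteq\mathfrak g^{k,-k}$ (because $\mathfrak k\supseteq\mathfrak g^{0,0}$ acts preserving type — indeed $[\mathfrak g^{a,-a},\mathfrak g^{b,-b}]\subseteq\mathfrak g^{a+b,-a-b}$ and $\mathfrak k$ is a sum of even-type pieces, but what we really need is just that $\mathfrak k\oplus\mathfrak p^{\pm}$ closes appropriately), the bracket lands in the same type component as $\mathfrak p^{\pm}$, hence in $\mathfrak p^{\pm}$, forcing $\varphi+\psi\in\Delta^{\pm}_{\mathfrak p}$. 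For $(\Delta^{\pm}_{\mathfrak p}+\Delta^{\pm}_{\mathfrak p})\cap\Delta=\emptyset$: if $\varphi,\psi\in\Delta^{\pm}_{\mathfrak p}$ with $\varphi+\psi\in\Delta$, then $e_{\varphi+\psi}$ is proportional to $[e_\varphi,e_\psi]$, which lies in the bracket of two pieces of type $k,k'$ with $k,k'$ of the same sign, hence is of type $k+k'$ with $|k+k'|\geq 2$; but a root vector has type in $\{-n,\dots,n\}$ coming from a single $\mathfrak g^{j,-j}$, and more to the point a root space inside $\mathfrak p$ cannot have $[\mathfrak p^+,\mathfrak p^+]$ nonvanishing because, after the normalization above, $\mathfrak p^+$ is abelian — concretely, one checks that the only types occurring in $\mathfrak p$ are determined by the Hodge-theoretic constraint and the sum of two like-signed ones exits the range, so the bracket is zero and $\varphi+\psi$ is not a root.

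Finally, the structural statements $\mathfrak p=\mathfrak p^+\oplus\mathfrak p^-$, $[\mathfrak p^{\pm},\mathfrak p^{\pm}]=0$, $[\mathfrak p^+,\mathfrak p^-]\subseteq\mathfrak k$, and $[\mathfrak k,\mathfrak p^{\pm}]\subseteq\mathfrak p^{\pm}$ follow by collecting the above: the direct sum decomposition is just the split $\Delta_{\mathfrak p}=\Delta^+_{\mathfrak p}\sqcup\Delta^-_{\mathfrak p}$ together with $\mathfrak p=\sum_{\varphi\in\Delta_{\mathfrak p}}\mathfrak g^\varphi$; $[\mathfrak p^{\pm},\mathfrak p^{\pm}]=0$ is the second inclusion of the first display; $[\mathfrak p^+,\mathfrak p^-]\subseteq\mathfrak k$ follows from $[\mathfrak p,\mathfrak p]\subseteq\mathfrak k$ (already recorded in the excerpt) intersected with the grading; and $[\mathfrak k,\mathfrak p^{\pm}]\subseteq\mathfrak p^{\pm}$ is the first inclusion of the first display combined with $[\mathfrak k,\mathfrak p]\subseteq\mathfrak p$, handling the case where a bracket $[\mathfrak h,\mathfrak g^\varphi]$ or $[e_\varphi,e_\psi]$ with $\varphi\in\Delta_{\mathfrak k},\psi\in\Delta^{\pm}_{\mathfrak p}$ happens to vanish or not be a root separately (then it contributes nothing or lies in $\mathfrak h\subseteq\mathfrak k$, which does not affect the $\mathfrak p^{\pm}$-component). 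I would cite Lemma 2.2.10 and Lemma 2.2.11 of \cite{Xu} for the clean statement and present the argument above as the verification in our Hodge-theoretic setting, so that the only genuinely new input is Lemma \ref{puretype}.
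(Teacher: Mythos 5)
The paper does not actually prove this lemma: it is quoted verbatim from Lemmas 2.2.10 and 2.2.11 at pp.~141 of \cite{Xu} (equivalently from \cite{Sugi}), whose mechanism is an order-theoretic argument about the lexicographic order built by placing a maximal linearly independent subset of $\Delta_{\mathfrak{p}}$ \emph{before} one of $\Delta_{\mathfrak{k}}$ (in the Hermitian symmetric setting one can instead use the central element of $\mathfrak{k}$ acting by $\pm\sqrt{-1}$ on $\mathfrak{p}^{\pm}$). So your proposal has to stand on its own, and it does not. The central step --- the ``dictionary'' identifying $\Delta^{\pm}_{\mathfrak{p}}$ with the noncompact roots of positive (resp.\ negative) Hodge type --- is justified by appealing to $[\mathfrak{p}^{+},\mathfrak{p}^{+}]=0$, which is one of the conclusions you are trying to prove; the alternative you offer (``invoke the structure of a simple Lie algebra of first category, where this correspondence is standard'') is just the citation the paper already makes. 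Nothing in the definition of the order ties the sign of a root $\varphi$ to the integer $k$ with $e_{\varphi}\in\mathfrak{g}^{k,-k}$: the ordered basis is adapted to the compact/noncompact dichotomy, not to the bigrading, and in general each odd graded piece of $\mathfrak{p}$ contains many positive and many negative roots.

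The second, independent gap is the type-counting argument for $(\Delta^{\pm}_{\mathfrak{p}}+\Delta^{\pm}_{\mathfrak{p}})\cap\Delta=\emptyset$. If $e_{\varphi}\in\mathfrak{g}^{k,-k}$ and $e_{\psi}\in\mathfrak{g}^{k',-k'}$ with $k,k'$ odd and of the same sign, then $[e_{\varphi},e_{\psi}]\in\mathfrak{g}^{k+k',-(k+k')}$ with $k+k'$ even and $|k+k'|\ge 2$; but the weight-zero Hodge structure on $\operatorname{End}(H^{n})$ has nonzero components for all $|j|\le n$ and $n\ge 3$ here, so $\mathfrak{g}^{\pm 2,\mp 2}\subseteq\mathfrak{k}$ is in general nonzero and the bracket does not ``exit the range.'' Even for the genuine positive-type subspace $\mathfrak{p}\cap\bigoplus_{k>0}\mathfrak{g}^{k,-k}$ one only gets that its self-bracket lies in $\bigoplus_{j\ge 2}\mathfrak{g}^{j,-j}$, which is not zero; so no argument based purely on the bigrading and Lemma \ref{puretype} can yield $[\mathfrak{p}^{\pm},\mathfrak{p}^{\pm}]=0$. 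The parts of your proposal that do go through are the easy ones: $\mathfrak{p}=\mathfrak{p}^{+}\oplus\mathfrak{p}^{-}$ from $\Delta_{\mathfrak{p}}=\Delta^{+}_{\mathfrak{p}}\sqcup\Delta^{-}_{\mathfrak{p}}$, and $[\mathfrak{p},\mathfrak{p}]\subseteq\mathfrak{k}$, $[\mathfrak{k},\mathfrak{p}]\subseteq\mathfrak{p}$ from $\theta$ being an involution. The whole difficulty of the lemma is concentrated in the two displayed root identities, and for those you would need to reproduce the actual argument of \cite{Sugi} and \cite{Xu} about the chosen lexicographic order rather than a Hodge-type argument.
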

\begin{definition}Two different roots $\varphi, \psi\in \Delta$ are said to be strongly orthogonal if and only if $\varphi\pm\psi\notin\Delta\cup \{0\}$, which is denoted by $\varphi \independent\psi$.
\end{definition}
For the real simple Lie algebra $\mathfrak{g}_0=\mathfrak{k}_0\oplus\mathfrak{p}_0$ which has a Cartan subalgebra $\mathfrak{h}_0$ in $\mathfrak{k}_0$, the maximal abelian subspace of $\mathfrak{p}_0$ can be described as in the following lemma, which is a slight extension of a lemma of Harish-Chandra in \cite{HC}. One may refer to \cite[Lemma 3]{Sugi1} or \cite[Lemma 2.2.12, page 141--142]{Xu} for more details. For reader's convenience we give the detailed proof.
\begin{lemma}\label{stronglyortho}There exists a set of strongly orthogonal noncompact positive roots $\Lambda=\{\varphi_1, \dots, \varphi_r\}\subseteq\Delta^+_{\mathfrak{p}}$ such that
\begin{align*}
\mathfrak{A}_0=\sum_{i=1}^r\mathbb{R}\left(e_{{\varphi_i}}+e_{{-\varphi_i}}\right)
\end{align*}
is a maximal abelian subspace in $\mathfrak{p}_0$.
\end{lemma}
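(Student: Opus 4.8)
The plan is to follow the classical Harish--Chandra construction: produce $\Lambda$ by a greedy selection from $\Delta_{\mathfrak p}^{+}$, check by hand that $\mathfrak a_0\subseteq\mathfrak p_0$ and that $\mathfrak a_0$ is abelian, and then establish maximality by using Cayley transforms attached to the selected roots to move $\mathfrak a_0$ inside the Cartan subalgebra, where maximality becomes a root--space computation. For the construction, I would use the lexicographic order on $\mathfrak h_{_{\mathbb R}}^{*}$ fixed above: let $\varphi_1$ be the largest root in $\Delta_{\mathfrak p}^{+}$, and having chosen pairwise strongly orthogonal $\varphi_1,\dots,\varphi_k\in\Delta_{\mathfrak p}^{+}$, set
\[
S_k=\{\psi\in\Delta_{\mathfrak p}^{+}:\ \psi\independent\varphi_i\ \text{ for all }1\le i\le k\};
\]
if $S_k\neq\emptyset$ let $\varphi_{k+1}$ be its largest element, otherwise stop. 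Since $\Delta$ is finite this terminates after $r$ steps, yielding $\Lambda=\{\varphi_1,\dots,\varphi_r\}\subseteq\Delta_{\mathfrak p}^{+}$, pairwise strongly orthogonal, with the crucial property that \emph{no} root in $\Delta_{\mathfrak p}^{+}$ is strongly orthogonal to every $\varphi_i$.

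Next come the two easy verifications. That $\mathfrak a_0\subseteq\mathfrak p_0$ is immediate from the Weyl-base description of $\mathfrak p_0$, which gives $e_{\varphi}+e_{-\varphi}\in\mathfrak p_0$ for every $\varphi\in\Delta_{\mathfrak p}$; I apply this to each $\varphi_i$. For abelianness, since $[X,X]=0$ automatically it suffices to treat $i\neq j$, where I expand
\[
[\,e_{\varphi_i}+e_{-\varphi_i},\ e_{\varphi_j}+e_{-\varphi_j}\,]=\sum_{\epsilon,\epsilon'\in\{+,-\}}[\,e_{\epsilon\varphi_i},e_{\epsilon'\varphi_j}\,],
\]
and note that each summand lies in $\mathfrak g^{\epsilon\varphi_i+\epsilon'\varphi_j}$, hence vanishes unless $\epsilon\varphi_i+\epsilon'\varphi_j\in\Delta$. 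Strong orthogonality gives $\varphi_i\pm\varphi_j\notin\Delta\cup\{0\}$, and $\varphi_i\neq-\varphi_j$ since both are positive, so every summand is $0$ and $[\mathfrak a_0,\mathfrak a_0]=0$.

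The hard part will be maximality: showing that any $X\in\mathfrak p_0$ commuting with $\mathfrak a_0$ lies in $\mathfrak a_0$. First observe that strong orthogonality also forces $\varphi_i(H_{\varphi_j})=0$ for $i\neq j$, where $H_{\varphi_j}:=[e_{\varphi_j},e_{-\varphi_j}]$, so each $\mathfrak s_j:=\operatorname{span}_{\mathbb C}\{e_{\varphi_j},e_{-\varphi_j},H_{\varphi_j}\}$ is a three--dimensional simple subalgebra and $[\mathfrak s_i,\mathfrak s_j]=0$ for $i\neq j$. I would then introduce the Cayley transforms $c_j=\exp\!\left(\operatorname{ad}\left(\tfrac{\pi}{4}(e_{\varphi_j}-e_{-\varphi_j})\right)\right)$: each is an inner automorphism carrying $e_{\varphi_j}+e_{-\varphi_j}$ to a nonzero multiple of an element of $\mathfrak h$ while acting trivially on the joint centralizer of the $\mathfrak s_i$, so that $c:=c_1\cdots c_r$ sends $\mathfrak a_0$ onto an $r$--dimensional toral subspace $\mathfrak a_0'\subseteq\mathfrak h$. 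Then $c(X)$ centralizes $\mathfrak a_0'$, hence by the $\mathfrak h$-root decomposition it is a Cartan element plus root vectors $e_{\psi}$ with $\psi|_{\mathfrak a_0'}=0$; transporting back through $c$ and separating $\mathfrak k_0$- and $\mathfrak p_0$-parts with Lemma \ref{RootSumNonRoot} (notably that $\mathfrak p^{\pm}$ are abelian) and Lemma \ref{puretype}, one finds that any nonzero $\mathfrak p_0$-component of $X$ not already in $\mathfrak a_0$ would produce a noncompact positive root strongly orthogonal to every $\varphi_i$ — contradicting the construction of $\Lambda$. Hence $X\in\mathfrak a_0$, proving maximality.

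I expect the real obstacle to lie entirely inside this last step: verifying that the Cayley transforms behave correctly with respect to $\tau_0$, $\tau_c$ and the real form $\mathfrak g_0$, identifying precisely which root spaces occur in $c(\mathfrak p_0)$ among those annihilating $\mathfrak a_0'$, and extracting the contradiction with maximality of the greedy system. An alternative I would keep in reserve, avoiding Cayley transforms, is a direct computation of the centralizer $\mathfrak z_{\mathfrak p_0}(\mathfrak a_0)$: writing $X=X^{+}+X^{-}$ with $X^{\pm}\in\mathfrak p^{\pm}$ and using $[\mathfrak p^{\pm},\mathfrak p^{\pm}]=0$ from Lemma \ref{RootSumNonRoot}, the relations $[X,e_{\varphi_j}+e_{-\varphi_j}]=0$ collapse to identities supported on compact root spaces which, together with the same combinatorial input about strongly orthogonal roots, force $X\in\mathfrak a_0$. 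This is essentially the line of \cite{Sugi1} and \cite{Xu}, whose arguments I would follow for the fine details.
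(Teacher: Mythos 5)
Your construction of $\Lambda$ and the two ``easy verifications'' match the paper exactly (the paper takes the \emph{minimal} rather than the maximal admissible root at each stage, but the argument is mirror-symmetric and this choice is immaterial). Where you diverge is in the maximality step: your primary route goes through Cayley transforms $c_j$ attached to the $\varphi_j$, moving $\mathfrak a_0$ into $\mathfrak h$ and reading off the centralizer there, whereas the paper runs precisely the argument you ``keep in reserve'': it takes $X\in\mathfrak p_0$ supported on $\Delta_{\mathfrak p}^{+}\setminus\Lambda$ commuting with $\mathfrak a_0$, uses $[\mathfrak p^{\pm},\mathfrak p^{\pm}]=0$ to reduce $[X,e_{\varphi_k}+e_{-\varphi_k}]=0$ to terms supported on $e_{\pm(\psi-\varphi_k)}$, and then, choosing $k$ minimal so that some $\psi_0$ with $c_{\psi_0}\neq0$ fails to be strongly orthogonal to $\varphi_k$, extracts a cancelling root $\psi_0'$ with $2\varphi_k=\psi_0+\psi_0'$; one of $\psi_0,\psi_0'$ is then smaller than $\varphi_k$ and strongly orthogonal to $\varphi_1,\dots,\varphi_{k-1}$, contradicting the greedy minimality. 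The paper's route buys a self-contained, purely combinatorial argument using only Lemmas \ref{puretype} and \ref{RootSumNonRoot}; your Cayley route is the more structural classical alternative, but as written it defers exactly the decisive verification (which root spaces survive in $c(\mathfrak p_0)$ among those annihilating $\mathfrak a_0'$, the compatibility with $\tau_0$ and $\mathfrak g_0$, and the passage from ``$\psi$ orthogonal to all $\varphi_j$'' to ``$\psi$ strongly orthogonal to all $\varphi_j$'' --- the last of which does hold here because $(\Delta_{\mathfrak p}^{\pm}+\Delta_{\mathfrak p}^{\pm})\cap\Delta=\emptyset$ forces $p=q$ in the $\varphi_j$-string through $\psi$, but this must be said). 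So your proposal is sound, with its fallback branch coinciding with the paper's proof; to make the primary branch a complete proof you would need to carry out the Cayley-transform bookkeeping you yourself identify as the obstacle.
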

\begin{proof}
Let $\varphi_1$ be the minimum in $\Delta_\mathfrak{p}^+$, and $\varphi_2$ be the minimal element in $\{\varphi\in\Delta_{\mathfrak{p}}^+:\,\varphi\independent \varphi_1\}$, then we obtain inductively an maximal ordered set of roots $\Lambda=\{\varphi_1,\dots, \varphi_r\}\subseteq \Delta_\mathfrak{p}^+$, such that for each $1\leq k\leq r$
\begin{align*}
\varphi_k=\min\{\phi\in\Delta_{\mathfrak{p}}^+:\,\varphi\independent \varphi_j \text{ for } 1\leq j\leq k-1\}.
\end{align*}
Because $\varphi_i\independent \varphi_j$ for any $1\leq i<j\leq r$, we have $[e_{{\pm\varphi_i}}, e_{{\pm\varphi_j}}]=0$. Therefore $$\mathfrak{A}_0=\sum_{i=1}^r\mathbb{R}\left(e_{{\varphi_i}}+e_{{-\varphi_i}}\right)$$ is an abelian subspace of $\mathfrak{p}_0$. Also because a root can not be strongly orthogonal to itself, the ordered set $\Lambda$ contains distinct roots. Thus $\dim_\mathbb{R}\mathfrak{A}_0=r$.

Now we prove that $\mathfrak{A}_0$ is a maximal abelian subspace of $\mathfrak{p}_0$. Suppose towards a contradiction that there was a nonzero vector $X\in\mathfrak{p}_0$ as follows
$$X=\!\!\sum_{\alpha\in\Delta_{\mathfrak{p}}^+\setminus \Lambda}\!\!\lambda_\alpha\left( e_{{\alpha}}+e_{{-\alpha}}\right)+\!\!\sum_{\alpha\in\Delta_{\mathfrak{p}}^+\setminus \Lambda}\!\!\mu_\alpha\sqrt{-1}\left( e_{{\alpha}}-e_{{-\alpha}}\right), \quad\text{where }\lambda_\alpha, \mu_\alpha\in \mathbb{R}, $$ 
such that $[X, e_{{\varphi_i}}\!+\!e_{{-\varphi_i}}]\!=\!0$ for each $1\!\leq\! i\!\leq\! r$.
We denote $c_\alpha\!=\!\lambda_\alpha\!+\!\sqrt{-1}\mu_\alpha$. Because $X\neq 0$, there exists $\psi\in\Delta_{\mathfrak{p}}^+\setminus \Lambda$ with $c_{\psi}\neq0$.
Also $\psi$ is not strongly orthogonal to $\varphi_i$ for some $1\leq i\leq r$. Thus we may first define $k_{\psi}$ for each $\psi$ with $c_{\psi}\neq 0$ as the following: $$k_{\psi}=\min_{1\leq i\leq r}\{i:  \psi \text{ is not strongly orthogonal to } \varphi_i\}. $$ Then we know that $1\leq k_\psi\leq r$ for each $\psi$ with $c_\psi\neq 0$. We define $k$ to be the following,
\begin{align}\label{definitionofk}
k=\min_{\psi\in \Delta^+_{\mathfrak{p}}\setminus\Lambda\text{ with }c_{\psi}\neq0}\{k_{\psi}\}.
\end{align}
Here, we are taking the minimum over a finite set in \eqref{definitionofk} and $1\leq k\leq r$. Moreover, we get the following non-empty set,
\begin{align}\label{definitionofSk}S_k=\{\psi\in \Delta^+_{\mathfrak{p}}\setminus \Lambda: c_{\psi}\neq 0 \text{ and } k_{\psi}=k\}\neq \emptyset.\end{align}

Recall the notation $N_{\beta, \gamma}$ for any $\beta, \gamma\in \Delta$ is defined as follows: if $\beta+\gamma\in \Delta\cup\{0\}$, $N_{\beta,\gamma}$ is defined such that
$[e_{\beta}, e_{\gamma}]=N_{\beta,\gamma}e_{\beta+\gamma};$
if $\beta+\gamma\notin \Delta\cup \{0\}$ then one defines $N_{\beta, \gamma}=0.$
Now let us take $k$ as defined in \eqref{definitionofk} and consider the Lie bracket
\begin{align*}
0&=[X, e_{{\varphi_k}}+e_{{-\varphi_k}}]\\
&=\sum_{\psi\in\Delta_{\mathfrak{p}}^+\setminus \Lambda}\big(c_\psi(N_{{\psi,\varphi_k}}e_{{\psi+\varphi_k}}+N_{{\psi,-\varphi_k}}e_{{\psi-\varphi_k}})\\
&\qquad\qquad\ \ +\bar{c}_\psi(N_{{-\psi,\varphi_k}}e_{{-\psi+\varphi_k}}+N_{{-\psi,-\varphi_k}}e_{{-\psi-\varphi_k}})\big).
\end{align*} As $[\mathfrak{p}^{\pm}, \,\mathfrak{p}^{\pm}]=0$, we have $\psi+\varphi_k \notin \Delta$ and $-\psi-\varphi_k \notin\Delta$ for each $\psi\in \Delta_{\mathfrak{p}}^+$. Hence, $N_{{\psi,\varphi_k}}=N_{{-\psi,-\varphi_k}}=0$ for each $\psi\in\Delta_{\mathfrak{p}}^+$. Then we have the simplified expression
\begin{align}\label{equal0}
0=[X, e_{{\varphi_k}}+e_{{-\varphi_k}}]=\sum_{\psi\in\Delta_{\mathfrak{p}}^+\setminus \Lambda}\left(c_{\psi}N_{{\psi,-\varphi_k}}e_{{\psi-\varphi_k}}
+\bar{c}_\psi N_{{-\psi,\varphi_k}}e_{{-\psi+\varphi_k}}\right).
\end{align}

 Now let us take $\psi_0\in S_k\neq \emptyset$. Then $c_{\psi_0}\neq 0$. By the definition of $k$, we have $\psi_0$ is not strongly orthogonal to $\varphi_k$ while $\psi_0+\varphi_k\notin \Delta\cup\{0\}$. Thus we have $\psi_0-\varphi_k \in\Delta\cup\{0\}$. Therefore  $c_{\psi_{0}}N_{{\psi_0,-\varphi_k}}e_{{\psi_0-\varphi_k}}\neq 0$. Since $0=\linebreak{} [X, e_{{\varphi_k}}+e_{{-\varphi_k}}]$, there must exist one element $\psi_0'\neq \psi_0\in\Delta_{\mathfrak{p}}^+\setminus \Lambda$ such that $\varphi_k-\psi_0=\psi_0'-\varphi_k$ and $c_{\psi_0'}\neq 0$. This implies $2\varphi_k=\psi_0+\psi_0'$, and consequently one of $\psi_0$ and $\psi_0'$ is smaller then $\varphi_k$. Then we have the following two cases:

(i) If $\psi_0<\varphi_k$, then we find $\psi_0<\varphi_k$ with $\psi_0\independent\varphi_i$ for all $1\leq i\leq k-1$, and this contradicts to the definition of $\varphi_k$ as the following
\begin{align*}
\varphi_k=\min\{\phi\in\Delta_{\mathfrak{p}}^+:\,\varphi\independent \varphi_j \text{ for } 1\leq j\leq k-1\}.
\end{align*}

(ii) If $\psi_0'<\varphi_k$, since we have $c_{\psi_0'}\neq 0$, we have
\begin{align*}
k_{{\psi_0'}}=\min_{1\leq i\leq r}\{i: \psi_0' \text{ is not strongly orthogonal to } \varphi_i \}.
\end{align*} Then by the definition of $k$ in \eqref{definitionofk}, we have $k_{{\psi_0'}}\geq k$. Therefore we found $\psi_0'<\varphi_k$ such that $\psi_0'<\varphi_i$ for any $1\leq i\leq k-1< k_{\psi_0'}$, and this contradicts with the definition of $\varphi_k$.

Therefore in both cases, we found contradictions. Thus we conclude that $\mathfrak{A}_0$ is a maximal abelian subspace of $\mathfrak{p}_0$.
\end{proof}
For further use, we also state a proposition about the maximal abelian subspaces of $\mathfrak{p}_0$ according to \cite[Ch.~V]{Hel},
\begin{proposition}\label{adjoint} Let $\mathfrak{A}_0'$ be an arbitrary maximal abelian subspaces of $\mathfrak{p}_0$, then there exists an element $k\in K$ such that $\emph{Ad}(k)\cdot \mathfrak{A}_0=\mathfrak{A}'_0$. Moreover, we have $$\mathfrak{p}_{0}=\bigcup_{k\in K}\emph{Ad}(k)\cdot\mathfrak{A}_0,$$ where $\emph{Ad}$ denotes the adjoint action of $K$ on $\mathfrak{A}_0$.
\end{proposition}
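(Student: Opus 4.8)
The plan is to derive the union formula $\mathfrak{p}_0=\bigcup_{k\in K}\mathrm{Ad}(k)\cdot\mathfrak{a}_0$ from the conjugacy statement, so that the whole proof reduces to showing that any two maximal abelian subspaces $\mathfrak{a}_0,\mathfrak{a}_0'$ of $\mathfrak{p}_0$ are carried into each other by some $\mathrm{Ad}(k)$ with $k\in K$. Granting this: for any $X\in\mathfrak{p}_0$ the line $\mathbb{R}X$ is an abelian subspace of $\mathfrak{p}_0$, hence (by finite-dimensionality) is contained in some maximal abelian subspace $\mathfrak{a}_0'$; writing $\mathfrak{a}_0'=\mathrm{Ad}(k)\cdot\mathfrak{a}_0$ gives $X\in\mathrm{Ad}(k)\cdot\mathfrak{a}_0$, and the reverse inclusion is trivial since $\mathrm{Ad}(K)$ preserves $\mathfrak{p}_0$.

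For the conjugacy I would run the standard variational argument (cf.~Ch.~V of \cite{Hel}). Since $\mathfrak{a}_0$ is \emph{maximal} abelian, one has $\{Y\in\mathfrak{p}_0:[\mathfrak{a}_0,Y]=0\}=\mathfrak{a}_0$, and there is an element $H\in\mathfrak{a}_0$ whose centralizer $\mathfrak{z}_{\mathfrak{p}_0}(H):=\{Y\in\mathfrak{p}_0:[H,Y]=0\}$ is exactly $\mathfrak{a}_0$; call such an $H$ regular, and likewise choose a regular $H'\in\mathfrak{a}_0'$. Because $K$ is a compact Lie group and $B$ is the Killing form of $\mathfrak{g}$, the smooth function $f(k)=B(\mathrm{Ad}(k)H,H')$ on $K$ attains a maximum at some $k_0\in K$.

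Differentiating $f$ at $k_0$ along $t\mapsto\exp(tZ)k_0$ for $Z\in\mathfrak{k}_0$, and using the invariance identity $B([Z,W],H')=B(Z,[W,H'])$, we get
\[
0=\tfrac{d}{dt}\Big|_{t=0}B\big(\mathrm{Ad}(\exp tZ)\,\mathrm{Ad}(k_0)H,\,H'\big)=B\big(Z,\,[\mathrm{Ad}(k_0)H,\,H']\big)\qquad\text{for all }Z\in\mathfrak{k}_0 .
\]
Now $[\mathrm{Ad}(k_0)H,H']\in[\mathfrak{p}_0,\mathfrak{p}_0]\subseteq\mathfrak{k}_0$ (the Cartan decomposition, cf.\ Lemma~\ref{RootSumNonRoot}, together with the fact that $\mathrm{Ad}(k_0)$ preserves $\mathfrak{p}_0$), while $B$ is negative definite — hence nondegenerate — on $\mathfrak{k}_0$, because $\mathfrak{k}_0=\mathfrak{g}_0\cap\mathfrak{g}_c$ lies in the compact real form. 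Therefore $[\mathrm{Ad}(k_0)H,H']=0$. Regularity of $H'$ then forces $\mathrm{Ad}(k_0)H\in\mathfrak{z}_{\mathfrak{p}_0}(H')=\mathfrak{a}_0'$, so $\mathfrak{a}_0'$ centralizes $\mathrm{Ad}(k_0)H$ and hence $\mathfrak{a}_0'\subseteq\mathfrak{z}_{\mathfrak{p}_0}(\mathrm{Ad}(k_0)H)=\mathrm{Ad}(k_0)\,\mathfrak{z}_{\mathfrak{p}_0}(H)=\mathrm{Ad}(k_0)\,\mathfrak{a}_0$. Since $\mathrm{Ad}(k_0)\mathfrak{a}_0$ is abelian and $\mathfrak{a}_0'$ is maximal abelian, equality $\mathfrak{a}_0'=\mathrm{Ad}(k_0)\mathfrak{a}_0$ follows, which completes the argument.

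I expect the only genuinely substantial point to be the existence of a regular element $H\in\mathfrak{a}_0$ — i.e.\ that $\min_{H\in\mathfrak{a}_0}\dim\mathfrak{z}_{\mathfrak{p}_0}(H)$ equals $\dim\mathfrak{a}_0$ and not something larger. This is where maximality of $\mathfrak{a}_0$ is really used: one invokes that $\mathrm{ad}(H)$ is semisimple with real eigenvalues for every $H\in\mathfrak{p}_0$, so that $H\mapsto\dim\mathfrak{z}_{\mathfrak{p}_0}(H)$ is upper semicontinuous with minimum achieved on a Zariski-dense open subset of $\mathfrak{a}_0$, and on that subset the centralizer is abelian, hence equals $\mathfrak{a}_0$ by maximality. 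Everything after this — the compactness producing $k_0$ and the Killing-form computation — is formal.
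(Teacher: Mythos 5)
Your proof is correct. The paper gives no proof of Proposition \ref{adjoint} at all — it simply cites Ch.~V of \cite{Hel} — and your argument (maximizing $k\mapsto B(\mathrm{Ad}(k)H,H')$ over the compact group $K$, using $[\mathfrak{p}_0,\mathfrak{p}_0]\subseteq\mathfrak{k}_0$ and the negative definiteness of $B$ on $\mathfrak{k}_0$ to kill the bracket, then invoking regularity; with the existence of regular elements supplied by the simultaneous diagonalizability of $\mathrm{ad}(\mathfrak{a}_0)$ on $\mathfrak{g}_0$, i.e.\ the restricted root decomposition) is precisely the standard proof from that reference, so there is nothing in the paper to compare it against; note only that you are right to derive $[\mathfrak{p},\mathfrak{p}]\subseteq\mathfrak{k}$ from Lemma \ref{RootSumNonRoot} rather than from the displayed bracket relations in Section \ref{preliminary}, where the paper misprints this inclusion as $[\mathfrak{p},\mathfrak{p}]\subseteq\mathfrak{p}$.
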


\subsection{Boundedness of the period maps}\label{boundedness of Phi}
Now let us fix the base point $p\in \mathcal{T}$ with $\Phi(p)=o\in D$. Then according to Remark \ref{identify n+}, $N_+$ can be viewed as a subset in $\check{D}$ by identifying it with its orbit in $\check{D}$ with the base point $\Phi(p)=o$.
  Let us also fix an adapted basis $(\eta_0, \dots, \eta_{m-1})$ for the Hodge decomposition of the base point $\Phi(p)\in D$. Then we identify elements in $N_+$ with nonsingular block lower triangular matrices whose diagonal blocks are all identity submatrix. We define
\begin{align*}
\check{\mathcal{T}}=\Phi^{-1}(N_+\cap D).
\end{align*}

At the base point $\Phi(p)=o\in N_+\cap D$, we have identifications of the tangent spaces $$\text{T}_{o}^{1,0}N_+=\text{T}_o^{1,0}D\simeq \mathfrak{n}_+\simeq N_+.$$ Then the Hodge metric on $\text{T}_o^{1,0}D$ induces an Euclidean metric on $N_+$. In the proof of the following lemma, we require all the root vectors to be unit vectors with respect to this Euclidean metric.

Let $$\mathfrak{p}_+=\mathfrak{p}/(\mathfrak{p}\cap
\mathfrak{b})=\mathfrak{p}\cap\mathfrak{n}_+ \subseteq
\mathfrak{n}_+$$  denote a subspace of $\text{T}_{o}^{1,0}D\simeq
\mathfrak{n}_+$, and $\mathfrak{p}_+$ can be viewed as an Euclidean
subspace of $\mathfrak{n}_+$. Similarly $\mathrm{exp}(\mathfrak{p}_+)$
can be viewed as an Euclidean subspace of $N_{+}$ with the
induced metric from $N_{+}$. Define the projection map

$$P_+:\, N_+\cap D\to \mathrm{exp}(\mathfrak{p}_+) \cap D$$ by
\begin{align}\label{definition of P_+}
P_{+}=\text{exp}\circ p_{+}\circ \text{exp}^{-1}
\end{align}
where $\text{exp}^{-1}:\, N_+ \to \mathfrak{n}_+$ is the inverse of the isometry $\text{exp}:\, \mathfrak{n}_+ \to N_+$, and $$p_{+}:\, \mathfrak{n}_+\to \mathfrak{p}_+$$ is the projection map from the complex Euclidean space $\mathfrak{n}_+$  to its  Euclidean subspace $\mathfrak{p}_+$.

The restricted period map $\P : \, \check{\T}\to N_+\cap D$, composed with the projection map $P_{+}$, gives a holomorphic map
\begin{equation}\label{maptoA}
\P_{+}:\,  \check{\T} \to \mathrm{exp}(\mathfrak{p}_+)\cap D,
\end{equation}
where $\P_{+}=P_{+} \circ \P|_{\check{\T}}$.



Because the period map is a horizontal map, and the geometry in the horizontal direction of the period domain $D$ is similar to Hermitian symmetric space as discussed in detail in \cite{GS},  the proof of the following lemma is basically an analogue of the proof of the Harish-Chandra embedding theorem for Hermitian symmetric spaces, see for example \cite{Mok}.
\begin{lemma}\label{abounded}The image of the holomorphic map $$\P_{+} :\, \check{\T} \to \mathrm{exp}(\mathfrak{p}_+)\cap D$$ is
bounded in $\mathrm{exp}(\mathfrak{p}_+)$ with respect to the Euclidean metric on $\mathrm{exp}(\mathfrak{p}_+)\subseteq
N_+$.
\end{lemma}

\begin{proof} 
We need to show that there exists $0\leq C<\infty$ such that for any
  $q\in \check{\mathcal{T}}$, $d_{E}({\P_{+}}({p}), {\P_{+}}(q))\leq C$, where $d_E$ is the Euclidean distance on $\exp(\mathfrak{p}_+)$. In fact the proof shows that $$ \mathrm{exp}(\mathfrak{p}_+)\cap D\subset \mathrm{exp}(\mathfrak{p}_+)$$ is a bounded subset in the complex Euclidean space $\mathrm{exp}(\mathfrak{p}_+)$.
    
By the definition of $\exp(\mathfrak{p}_+)$, for any $t\in \exp(\mathfrak{p}_+)$ 
there is a unique $Y \in \mathfrak{p}_{+}$
such that the left translation $\exp (Y)\bar{o}=t$, where $\bar{o}=P_{+}(o)$ is the base point in $\exp(\mathfrak{p}_+)\cap D$.
On the other hand, for any $s\in \exp(\mathfrak{p}_+)\cap D$, there also exists an $X \in \mathfrak{p}_{0}$
such that $\exp (X)\bar{o}=s$.

Next we analyze the point  $\exp (X)\bar{o}$ considered in $\exp(\mathfrak{p}_+)$ by using the method of Harish-Chandra's proof of his famous embedding theorem for Hermitian symmetric spaces. See pages 94--97 in \cite{Mok}.





Let $\Lambda=\{\varphi_1, \dots, \varphi_r\}\subseteq \Delta^+_{\mathfrak{p}}$ be a set of strongly orthogonal roots given in Lemma \ref{stronglyortho}. We denote $x_{\varphi_i}=e_{\varphi_i}+e_{-\varphi_i}$ and $y_{\varphi_i}=\sqrt{-1}(e_{\varphi_i}-e_{-\varphi_i})$ for any $\varphi_i\in \Lambda$. Then
\begin{align*} \mathfrak{a}_0=\mathbb{R} x_{\varphi_{_1}}\oplus\cdots\oplus\mathbb{R}x_{\varphi_{_r}},\quad\text{and}\quad\mathfrak{a}_c=\mathbb{R} y_{\varphi_{_1}}\oplus\cdots\oplus\mathbb{R}y_{\varphi_{_r}},
\end{align*}
are maximal abelian spaces in $\mathfrak{p}_0$ and $\sqrt{-1}\mathfrak{p}_0$ respectively.

Since $X\in \mathfrak{p}_0$, by Proposition \ref{adjoint}, there exists $k\in K$ such that
$ X\in Ad(k)\cdot\mathfrak{A}_0$. As the adjoint action of $K$ on $\mathfrak{p}_0$ is unitary action and we are considering the length in this proof, we may simply assume that $X\in \mathfrak{A}_0$ up to a unitary transformation. With this assumption, there exists $\lambda_{i}\in \mathbb{R}$ for $1\leq i\leq r$ such that
\begin{align*}
X=\lambda_{1}x_{{\varphi_1}}+\lambda_{2}x_{{\varphi_2}}+\cdots+\lambda_{r}x_{{\varphi_r}}
\end{align*}
Since $\mathfrak{A}_0$ is commutative, we have
\begin{align*}
\exp(tX)=\prod_{i=1}^r\exp (t\lambda_{i}x_{{\varphi_i}}).
\end{align*}

Now for each $\varphi_i\in \Lambda$, we have $\text{Span}_{\mathbb{C}}\{e_{{\varphi_i}}, e_{{-\varphi_i}}, h_{{\varphi_i}}\}\simeq \mathfrak{sl}_2(\mathbb{C}) \text{ with}$
\begin{align*}
h_{{\varphi_i}}\mapsto \left[\begin{array}[c]{cc}1&0\\0&-1\end{array}\right], \quad &e_{{\varphi_i}}\mapsto \left[\begin{array}[c]{cc} 0&1\\0&0\end{array}\right], \quad e_{{-\varphi_i}}\mapsto \left[\begin{array}[c]{cc} 0&0\\1&0\end{array}\right];
\end{align*}
and  $\text{Span}_\mathbb{R}\{x_{{\varphi_i}}, y_{{\varphi_i}}, \sqrt{-1}h_{{\varphi_i}}\}\simeq \mathfrak{sl}_2(\mathbb{R})\text{ with}$
\begin{align*}
\sqrt{-1}h_{{\varphi_i}}&\mapsto \left[\begin{array}[c]{cc}\sqrt{-1}&0\\0&-\sqrt{-1}\end{array}\right], \quad &x_{{\varphi_i}}\mapsto \left[\begin{array}[c]{cc} 0&1\\1&0\end{array}\right], \\[1ex]
y_{{\varphi_i}}&\mapsto \left[\begin{array}[c]{cc} 0&\sqrt{-1}\\-\sqrt{-1}&0\end{array}\right].
\end{align*}

Since $\Lambda=\{\varphi_1, \dots, \varphi_r\}$ is a set of strongly orthogonal roots, we have that
\begin{align*}&\mathfrak{g}_{\mathbb{C}}(\Lambda)=\text{Span}_\mathbb{C}\{e_{{\varphi_i}}, e_{{-\varphi_i}}, h_{{\varphi_i}}\}_{i=1}^r\simeq (\mathfrak{sl}_2(\mathbb{C}))^r,\\\text{and} \quad&\mathfrak{g}_{\mathbb{R}}(\Lambda)=\text{Span}_\mathbb{R}\{x_{{\varphi_i}}, y_{{\varphi_i}}, \sqrt{-1}h_{{\varphi_i}}\}_{i=1}^r\simeq (\mathfrak{sl}_2(\mathbb{R}))^r.
\end{align*}
In fact, we know that for any $\varphi, \psi\in \Lambda$ with $\varphi\neq\psi$, $[e_{{\pm\varphi}}, \,e_{{\pm\psi}}]=0$ since $\varphi$ is strongly orthogonal to $\psi$; $[h_{\phi},\,h_{{\psi}}]=0$, since $\mathfrak{h}$ is abelian; and
$$[h_{{\varphi}},\, e_{{\pm\psi}}]=[[e_{{\varphi}},\,e_{{-\varphi}}],\,e_{{\pm\psi}}]=-[[e_{-\phi},\ e_{\pm\psi}],\ e_\phi]-[[e_{\pm\psi},\ e_\phi],\ e_{-\phi}]=0.$$

Let us denote $G_{\mathbb{C}}(\Lambda)=\exp(\mathfrak{g}_{\mathbb{C}}(\Lambda))\simeq (SL_2(\mathbb{C}))^r$ and $G_{\mathbb{R}}(\Lambda)=\linebreak \text{exp}(\mathfrak{g}_{\mathbb{R}}(\Lambda))=(SL_2(\mathbb{R}))^r$, which are subgroups of $G_{\mathbb{C}}$ and $G_{\mathbb{R}}$ respectively.
With the fixed reference point $o=\P(p)$, we denote $D(\Lambda)=G_\mathbb{R}(\Lambda)(o)$ and $S(\Lambda)=G_{\mathbb{C}}(\Lambda)(o)$ to be the corresponding orbits of these two subgroups, respectively. Then we have the following isomorphisms,
\begin{align}
&D(\Lambda)=G_{\mathbb{R}}(\Lambda)\cdot B/B\simeq G_{\mathbb{R}}(\Lambda)/G_{\mathbb{R}}(\Lambda)\cap V,\label{isomorphism1}\\
&S(\Lambda)\cap (N_+B/B)=(G_{\mathbb{C}}(\Lambda)\cap N_+)\cdot B/B\simeq G_{\mathbb{C}}(\Lambda)\cap N_+.
\label{isomorphism2}
\end{align}
With the above notations, we will show that
\begin{itemize}
\item[(i)] $D(\Lambda)\subseteq S(\Lambda)\cap (N_+B/B)\subseteq \check{D}$;
\item[(ii)] $D(\Lambda)$ is bounded inside $S(\Lambda)\cap(N_+B/B)$.
\end{itemize}

By Lemma \ref{puretype}, we know that for each pair of roots $\{e_{{\varphi_i}}, e_{{-\varphi_i}}\}$, there exists a positive integer $k$ such that either $e_{{\varphi_i}}\in \mathfrak{g}^{-k,k}\subseteq \mathfrak{n}_+$ and $e_{{-\varphi_i}}\in \mathfrak{g}^{k,-k}$, or $e_{{\varphi_i}}\in \mathfrak{g}^{k,-k}$ and $e_{{-\varphi_i}}\in \mathfrak{g}^{-k,k}\subseteq\mathfrak{n}_+$. For the simplicity of notations, for each pair of root vectors $\{e_{{\varphi_i}}, e_{{-\varphi_i}}\}$, we may assume the one in $\mathfrak{g}^{-k,k}\subseteq \mathfrak{n}_+$ to be $e_{{\varphi_i}}$ and denote the one in $\mathfrak{g}^{k,-k}$ by $e_{{-\varphi_i}}$. In this way, one can check that $\{\varphi_1, \dots, \varphi_r\}$ may not be a set in $\Delta^+_{\mathfrak{p}}$, but it is a set of strongly orthogonal roots in $\Delta_{\mathfrak{p}}$. In this case, for any two different vectors $e_{{\varphi_i}},e_{{\varphi_j}}$  in $\{e_{{\varphi_1}}, e_{{\varphi_2}}, \dots, e_{{\varphi_r}}\}$, the Hermitian inner product
\begin{align*}
-B(\theta(e_{\phi_i}),\bar{e_{\phi_j}})&=-B(-e_{\phi_i},e_{-\phi_j})\\
&=B(1/2[h_{\phi_i},e_{\phi_i}],e_{-\phi_j})\\
&=-B(1/2e_{\phi_i},[h_{\phi_i},e_{-\phi_j}])=0.
\end{align*} 
Hence the basis $\{e_{{\varphi_1}}, e_{{\varphi_2}}, \dots,
e_{{\varphi_r}}\}$ can be chosen as an orthonormal basis.

Therefore, we have the following description of the above groups,
\begin{align*}
G_{\mathbb{R}}(\Lambda)&=\exp(\mathfrak{g}_{\mathbb{R}}(\Lambda))\\
&=\exp(\text{Span}_{\mathbb{R}}\{ x_{{\varphi_1}}, y_{{\varphi_1}},\sqrt{-1}h_{{\varphi_1}}, \dots, x_{{\varphi_r}}, y_{{\varphi_r}}, \sqrt{-1}h_{{\varphi_r}}\})\\
G_{\mathbb{R}}(\Lambda)\cap V&=\exp(\mathfrak{g}_{\mathbb{R}}(\Lambda)\cap \mathfrak{v}_0)=\exp(\text{Span}_{\mathbb{R}}\{\sqrt{-1}h_{{\varphi_1}}, \cdot,\sqrt{-1} h_{{\varphi_r}}\})\\
G_{\mathbb{C}}(\Lambda)\cap N_+&=\exp(\mathfrak{g}_{\mathbb{C}}(\Lambda)\cap \mathfrak{n}_+)=\exp(\text{Span}_{\mathbb{C}}\{e_{{\varphi_1}}, e_{{\varphi_2}}, \dots, e_{{\varphi_r}}\}).
\end{align*}
Thus by the isomorphisms in \eqref{isomorphism1} and \eqref{isomorphism2}, we have
\begin{align*}
&D(\Lambda)\simeq \prod_{i=1}^r\exp(\text{Span}_{\mathbb{R}}\{x_{{\varphi_i}}, y_{{\varphi_i}}, \sqrt{-1}h_{{\varphi_i}}\})/\exp(\text{Span}_{\mathbb{R}}\{\sqrt{-1}h_{{\varphi_i}}\},\\
&S(\Lambda)\cap (N_+B/B)\simeq\prod_{i=1}^r\exp(\text{Span}_{\mathbb{C}}\{e_{{\varphi_i}}\}).
\end{align*}
Let us denote $G_{\mathbb{C}}(\varphi_i)=\exp(\text{Span}_{\mathbb{C}}\{e_{{\varphi_i}}, e_{{-\varphi_i}}, h_{{\varphi_i}})\simeq SL_2(\mathbb{C}),$ $S(\varphi_i)=\linebreak G_{\mathbb{C}}(\varphi_i)(o)$, and $G_{\mathbb{R}}(\varphi_i)\!=\!\exp(\text{Span}_{\mathbb{R}}\{x_{{\varphi_i}}, y_{{\varphi_i}}, \sqrt{-1}h_{{\varphi_i}}\})\!\simeq\! SL_2(\mathbb{R})$, $D(\varphi_i)\!=\!G_{\mathbb{R}}(\varphi_i)(o)$.

Now each point in $S(\varphi_i)\cap (N_+B/B)$ can be represented by
\begin{align*}
\text{exp}(ze_{{\varphi_i}})=\left[\begin{array}[c]{cc}1&z\\ 0& 1\end{array}\right] \quad \text{for some } z\in \mathbb{C}.
\end{align*}
Thus $S(\varphi_i)\cap (N_+B/B)\simeq \mathbb{C}$. In order to see $D(\varphi_i)$ in $G_\C/B$, we decompose each point in $D(\varphi_i)$ as follows. Let $z=a+bi$ for some $a,b\in\mathbb{R}$, then
\begin{align}\label{x to y}
\exp(ax_{{\varphi_i}}+by_{{\varphi_i}})&=\left[\begin{array}[c]{cc}\cosh |z|&\frac{z}{|z|}\sinh |z|\\ \frac{\bar{z}}{|z|}\sinh |z|& \cosh |z|\end{array}\right] \\
&=\left[\begin{array}[c]{cc}1&\frac{z}{|z|}\tanh |z|\\ 0&1\end{array}\right]\left[\begin{array}[c]{cc}(\cosh |z|)^{-1}&0\\0&\cosh |z|\end{array}\right]\nonumber\\
&\quad \left[\begin{array}[c]{cc}1& 0 \\ \frac{\bar{z}}{|z|}\tanh |z|&1\end{array}\right]\nonumber\\
\notag &=\exp\left[(\frac{z}{|z|}\tanh |z| )e_{{\varphi_i}}\right]\exp\left[-\log (\cosh |z|)h_{{\varphi_i}}\right]\\
\notag &\quad \exp\left[(\frac{\bar{z}}{|z|}\tanh |z|) e_{{-\varphi_i}}\right]\\
&\equiv \exp\left[(\frac{z}{|z|}\tanh |z| )e_{{\varphi_i}}\right] \ (\text{mod }B).\nonumber
\end{align}
So the elements of $D(\varphi_i)$ in $G_\C/B$ can be represented by $\exp[(z/|z|)(\tanh |z|) e_{{\varphi_i}}]$, i.e.
\begin{align*}
\left[\begin{array}[c]{cc}1&  \frac{z}{|z|}\tanh |z|\\0&1\end{array}\right],
\end{align*}
in which $\frac{z}{|z|}\tanh |z|$ is a point in the unit disc $\mathfrak{D}$ of the complex plane. Therefore  $D(\varphi_i)$ is a unit disc $\mathfrak{D}$ in the complex plane $S(\varphi_i)\cap (N_+B/B)$.
 Therefore $$D(\Lambda)\simeq \mathfrak{D}^r\quad\text{and}\quad S(\Lambda)\cap N_+\simeq \mathbb{C}^r. $$ So we have obtained both (i) and (ii).
As a consequence, we get that for any $q\in \check{\T}$, $\P_{+}(q)\in D(\Lambda)$. This implies
\begin{align*}d_E(\P_{+}(p), \P_{+}(q))\leq \sqrt{r}
\end{align*} where $d_E$ is the Eulidean distance on $S(\Lambda)\cap (N_+B/B)$.

To complete the proof, we only need to show that $S(\Lambda)\cap (N_+B/B)$ is totally geodesic in $N_+B/B$. In fact, the tangent space of $N_+$ at the base point is $\mathfrak{n}_+$ and the tangent space of $S(\Lambda)\cap (N_+B/B)$ at the base point is $\text{Span}_{\mathbb{C}}\{e_{{\varphi_1}}, e_{{\varphi_2}}, \dots, e_{{\varphi_r}}\}$. Since $\text{Span}_{\mathbb{C}}\{e_{{\varphi_1}}, e_{{\varphi_2}}, \dots, e_{{\varphi_r}}\}$ is a Lie subalgebra of $\mathfrak{n}_+$, the corresponding orbit $S(\Lambda)\cap (N_+B/B)$ is totally geodesic in $N_+B/B$. Here recall that the basis $\{e_{{\varphi_1}}, e_{{\varphi_2}}, \dots, e_{{\varphi_r}}\}$ is an orthonormal basis.
\end{proof}

Although not needed in the proof of the above theorem, we also show that the above inclusion of $D(\varphi_i)$ in $ D$ is totally geodesic in $D$ with respect to the Hodge metric. In fact, the tangent space of $D(\varphi_i)$ at the base point is  $\text{Span}_{\mathbb{R}}\{x_{{\varphi_i}}, y_{{\varphi_i}}\}$ which satisfies
\begin{align*}
&[x_{{\varphi_i}}, [x_{{\varphi_i}}, y_{{\varphi_i}}]]=4y_{{\varphi_i}},\\
&[y_{{\varphi_i}}, [y_{{\varphi_i}}, x_{{\varphi_i}}]]=4x_{{\varphi_i}}.
\end{align*}
So the tangent space of $D(\varphi_i)$ forms a Lie triple system, and consequently $D(\varphi_i)$ gives a totally geodesic submanifold of $D$. 

The fact that the exponential map of a Lie triple system gives a totally geodesic submanifold of $D$ is from (cf. \cite[Ch4, $\S$7]{Hel}), and we note that this result still holds true for locally homogeneous spaces instead of only for symmetric spaces. 
The pull-back of the Hodge metric on $D(\varphi_i)$ is $G(\varphi_i)$ invariant metric, therefore must be the Poincare metric on the unit disc. In fact, more generally, we have
\begin{lemma}
If $\tilde{G}$ is a subgroup of $G_{\mathbb{R}}$, then the orbit $\tilde{D}=\tilde{G}(o)$ is a totally geodesic submanifold of $D$, and the induced metric on $\tilde{D}$ is $\tilde{G}$ invariant.
\end{lemma}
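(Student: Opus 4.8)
The plan is to deduce both assertions from homogeneity together with the exponential description of totally geodesic submanifolds already used above for the discs $D(\varphi_i)$. Since the Hodge metric on $D$ is $G_{\mathbb{R}}$-invariant and $\tilde{G}\subseteq G_{\mathbb{R}}$, the subgroup $\tilde{G}$ acts on $D$ by isometries, preserves the orbit $\tilde{D}=\tilde{G}(o)$, and acts transitively on it; hence the metric induced on $\tilde{D}$ is automatically $\tilde{G}$-invariant, and it is enough to verify that $\tilde{D}$ is totally geodesic at the single reference point $o$.

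For that, I would identify tangent spaces using $D\simeq G_{\mathbb{R}}/V$ with $\mathfrak{v}_0\subseteq\mathfrak{k}_0$: writing $\tilde{\mathfrak{g}}\subseteq\mathfrak{g}_0$ for the Lie algebra of $\tilde{G}$, the canonical isomorphism $T_oD\cong\mathfrak{g}_0/\mathfrak{v}_0$ restricts to $T_o\tilde{D}\cong(\tilde{\mathfrak{g}}+\mathfrak{v}_0)/\mathfrak{v}_0$. The key point is that the image of $\tilde{\mathfrak{g}}$ in $\mathfrak{g}_0/\mathfrak{v}_0$ is a Lie triple system for the locally homogeneous structure of $D$: because $\tilde{\mathfrak{g}}$ is a subalgebra it is closed under the bracket, hence under the iterated bracket $[[\cdot,\cdot],\cdot]$, and combining this with the relations $[\mathfrak{k},\mathfrak{k}]\subseteq\mathfrak{k}$, $[\mathfrak{k},\mathfrak{p}]\subseteq\mathfrak{p}$, $[\mathfrak{p},\mathfrak{p}]\subseteq\mathfrak{k}$ that follow from $\theta$ being a Lie-algebra automorphism (equivalently, from Lemma \ref{RootSumNonRoot}) and with $\mathfrak{v}_0\subseteq\mathfrak{k}_0$, one checks that the projection of $\tilde{\mathfrak{g}}$ to an $\mathrm{Ad}(V)$-invariant complement of $\mathfrak{v}_0$ in $\mathfrak{g}_0$ is stable under the triple product determined by the curvature tensor of $D$. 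Then $\tilde{D}=\exp\!\big(T_o\tilde{D}\big)(o)$, and the version for locally homogeneous spaces of Helgason's theorem on exponentials of Lie triple systems (cf. \cite{Hel}, Ch.~IV, \S 7), quoted in the remark preceding this lemma, shows that $\tilde{D}$ is totally geodesic in $D$.

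The part I expect to be the main obstacle is precisely this middle step: $D$ is not a symmetric space, so one must first pin down the correct notion of ``Lie triple system'' relative to an $\mathrm{Ad}(V)$-invariant complement of $\mathfrak{v}_0$ in $\mathfrak{g}_0$ and verify that Helgason's argument really does carry over to this locally homogeneous setting; and one genuinely uses that $\tilde{\mathfrak{g}}$ is reductive and stable under the Cartan involution $\theta$, a property that holds for the subgroups actually needed (for instance $G_{\mathbb{R}}(\Lambda)$ and $G_{\mathbb{R}}(\varphi_i)$, which are generated by the root triples $\{x_{\varphi_i},y_{\varphi_i},\sqrt{-1}h_{\varphi_i}\}$) but should be recorded as the standing hypothesis on $\tilde{G}$. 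Once the tangent space is known to be such a Lie triple system, the remainder — the exponential identification of the orbit and the invariance of the restricted metric — is routine.
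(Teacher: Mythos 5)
Your route is genuinely different from the paper's, and it has a real gap at exactly the point you flag. The paper's proof is much more direct: it observes that the induced metric on $\tilde{D}\simeq\tilde{G}/(\tilde{G}\cap V)$ is $\tilde{G}$-invariant because the Hodge metric is $G_{\mathbb{R}}$-invariant, and then argues that any geodesic $\gamma$ of $\tilde{D}$ is of the form $\gamma(t)=S(t)\cdot\gamma(0)$ for a one-parameter subgroup $S(t)$ of $\tilde{G}$; since $S(t)$ is then also a one-parameter subgroup of $G_{\mathbb{R}}$, the same curve is a geodesic of $D$, so $\tilde{D}$ is totally geodesic. No Lie triple systems, no base-point reduction, and no hypothesis on $\tilde{\mathfrak{g}}$ beyond its being a subalgebra.

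The gap in your proposal is the middle step, which you correctly identify as the obstacle but then leave as ``one checks.'' That check is the entire content of the argument, and it cannot succeed in the generality of the statement: the lemma assumes only that $\tilde{G}$ is a subgroup of $G_{\mathbb{R}}$, whereas your argument needs $\tilde{\mathfrak{g}}$ to be $\theta$-stable (so that it splits along $\mathfrak{k}_0\oplus\mathfrak{p}_0$) before the projection to a complement of $\mathfrak{v}_0$ has any chance of being closed under a triple product. Even granting $\theta$-stability, $D=G_{\mathbb{R}}/V$ is not a symmetric space --- $V$ is strictly smaller than $K$ --- so the curvature tensor at $o$ is not $-[[X,Y],Z]$, and ``closed under iterated brackets'' does not translate into ``closed under the curvature triple product'' without an explicit computation of the curvature of the Hodge metric on the reductive complement; the paper's remark about $D(\varphi_i)$ gets away with this only because there the tangent space is the specific two-dimensional span $\{x_{\varphi_i},y_{\varphi_i}\}$ with the explicit relations $[x,[x,y]]=4y$, $[y,[y,x]]=4x$. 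Finally, the identification $\tilde{D}=\exp(T_o\tilde{D})(o)$ of the orbit with the exponential image of its tangent space is asserted but not proved, and it is false for general subgroups. So as written your argument proves at best a version of the lemma with added hypotheses, and even that version still requires the unverified locally-homogeneous Lie-triple-system criterion. If you want a complete proof, the paper's one-parameter-subgroup argument is the intended one (though you may notice it itself quietly assumes that every geodesic of the homogeneous space $\tilde{D}$ is a one-parameter-subgroup orbit).
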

\begin{proof}
Firstly, $\tilde{D}\simeq \tilde{G}/(\tilde{G}\cap V)$ is a quotient space. The induced metric of the Hodge metric from $D$ is $G_{\mathbb{R}}$-invariant, and therefore $\tilde{G}$-invariant.
Now let $\gamma:\,[0,1]\to \tilde{D}$ be any geodesic, then there is a local one parameter subgroup $S:\,[0,1] \to\tilde{G}$ such that, $\gamma(t)=S(t)\cdot \gamma(0)$. On the other hand, because $\tilde{G}$ is a subgroup of $G_{\mathbb{R}}$, we have that $S(t)$ is also a one parameter subgroup of $G_{\mathbb{R}}$, therefore the curve $\gamma(t)=S(t)\cdot \gamma(0)$ also gives a geodesic in $D$.
Since geodesics on $\tilde{D}$ are also geodesics on $D$, we have proved $\tilde{D}$ is totally geodesic in $D$.
\end{proof}
The following corollary is important to us.
\begin{corollary}\label{X to Y}
The underlying real manifold of $\mathrm{exp}(\mathfrak{p}_+)\cap D$ is diffeomorphic to $G_{\mathbb{R}}/K\simeq \exp(\mathfrak{p}_{0})$.
\end{corollary}
\begin{proof}
As discussed at the beginning of the proof of Lemma \ref{abounded}, for any $s\in \exp(\mathfrak{p}_+)\cap D$, there is an $X \in \mathfrak{p}_{0}$ such that $\exp (X)\bar{o}=s$, where $\bar{o}=P_{+}(o)$.
From equation \eqref{x to y}, one sees that there exists $Y\in \mathfrak{p}_+$, satisfying $$X =T_0( Y +\tau_0(Y))$$ for a unique real number $T_0$, such that $\exp (X)\bar{o} = \exp (Y)bar{o}$. Hence we have a diffeomorphism $$\exp(\mathfrak{p}_{+}) \cap D\to \exp(\mathfrak{p}_0)\simeq G_{\mathbb{R}}/K,$$ by mapping $\exp (Y) \bar{o}$ to $\exp (X)\bar{o}$ with the relation that $X =T_0( Y +\tau_0(Y))$.
For a more detailed description of the above explicit correspondence from $\mathfrak{p}_0$ to $\mathfrak{p}_{+}$ in proving the Harish-Chandra embedding, please see Lemma~7.11 in pages 390--391 in \cite{Hel}, pages 94--97 of \cite{Mok}, or the discussion in pages 463--466 in \cite{Xu}. 
\end{proof}

As proved in Proposition 3 of Chapter 2 in \cite{Schwartz}, for the extended period map $\PP:\, \TT \to D$, there is  a Whitney stratification $$\TT=\cup_{i}\T_i$$ such that if $\Phi_i=\PP|_{\T_i}$, the rank of the tangent map $d\Phi_i$ is constant on $\T_i$. Note that 
the stratification is narrow in the sense that for any open neighborhood $U$ of any point in $\TT$, it induces a Whitney stratification of $U$. We will define the image $\PP(U)\subset D$ of a small open neighborhood $U$ of  $\TT$  under the period map $\PP$ as a {\em horizontal slice}, which is given by the union of the image of each $\T_i$ restricted to the neighborhood $U$. 

Note that we can always take $U$ arbitrarily small as needed. Clearly we have $$\PP(U) =\cup_i L_i$$ where each $L_i= \PP(\T_i\cap U)$  is a smooth manifold when $U$ is small enough, and they induce a Whitney stratification of $\PP(U)$ by the continuity of the tangent map of $\PP$. We remark that locally there are only finitely many strata $\T_i$, since $\TT$ is finite dimensional and each $L_i$ is an integral submanifold of the horizontal distribution induced by the period map.  See for example,  page 36 in \cite{Pflaum} about details related to the Whitney stratifications.  

In fact here we can also directly use the Whitney stratification of $\PP(U)$ for the proof of the following Lemma, while using the Whitney stratification for $\TT$ makes the geometric picture of the period map more transparent. 

As local Torelli theorem holds for Calabi--Yau manifolds, the extended period map $\PP$ is nondegenerate on $\T_{m}\subset \TT$. 
Since $i_m:\, \T \to \T_m$ is a covering, $i_{m}(U)$ is a neighborhood of $i_m(q)$ in $\T_m\subset \TT$ if $U$ is taken as a small enough neighborhood of a point $q$ in $\T$. Therefore we also call the image $\P(U)=\P^H_m(i_m(U))$ a horizontal slice.





Note that, at any point  $t\in L_{i}$, by the Griffiths transversality, we know that the corresponding real tangent spaces satisfy $$\text{T}_tL_{i} \subset \text{T}_{h,t}D\subset \text{T}_{\bar t}G_\mathbb{R}/K\simeq \mathfrak{p}_0 $$ where $\bar{t}=\pi(t)$. Here the inclusion $\text{T}_{h,t}D\subset \text{T}_{\bar t}G_\mathbb{R}/K$ is induced by the tangent map of $\pi$ at $t$.
Therefore the tangent map of $\pi|_{L_i}:\, L_i \to G_{\mathbb R}/K $ at $t\in L_i$ is injective, and $\pi$ is injective in a small neighborhood of $t$ in $L_i$. From this one can see that the following lemma is a straightforward corollary of the Griffiths transversality.

\begin{lemma}\label{locally injective}
The projection map $\pi:\, D \to G_\mathbb{R}/K$ is injective on horizontal slices. That is to say that  for any horizontal slice $\PP(U)$ around any point $s=\PP(q)$ with $\PP(U) =\cup_i L_i$, we can take the open neighborhood $U$ of $q\in \T$  small enough such that $\pi$ is injective on each $L_{i}$. 
\end{lemma}
\begin{proof} 
First from the above discussion, we see that the lemma is an obvious corollary from the Griffiths transversality,  if $\PP(U)$ is smooth. The 
proof for general case is essentially the same, except that we need to use the Whitney stratification of $\PP(U)$ and apply the Griffiths transversality on each stratum. This should be standard in stratified spaces as discussed, for example,  in Section 3.8 of Chapter 1 in \cite{Pflaum}. We give the detailed argument for reader's  convenience.

Let $s=\PP(q)\in D$ and $U$ be a small  open neighborhood of $q$. As described above, we have the Whitney stratification $\PP(U)=\cup_{i} L_{i}$ and each $L_i$ can be identified to the image $\PP(\T_i\cap U)$ of the stratum $\T_i$.  

From Theorem 2.1.2 of \cite{Pflaum}, we know that the tangent bundle $\text{T}\PP(U)$\linebreak is  a stratified space with a smooth structure, such that the projection\linebreak $\text{T}\PP(U)\to \PP(U)$ is smooth and a morphism of stratified spaces.
For any sequence of points $\{s_k\}$ in a horizontal slice $L_i$ converging to $s$, the limit of the tangent spaces, $$\lim_{k\to \infty} \text{T}_{s_k}L_i=\text{T}_sL_i$$ exists by the Whitney conditions, and is defined as the generalized tangent space at $s$ in page 44 of \cite{GM}. Also see the discussion in page 64 of \cite{Pflaum}. Denote $\bar{s} =\pi(s)$. With these notations understood, and by the Griffiths transversality, we get the following relations for the corresponding real tangent spaces,
\begin{align*}
\text{T}_s\PP(U)=\cup_i\text{T}_sL_i\simeq (d\PP)_{q}(\text{T}_q\T) &\subset (\mathfrak{g}^{-1,1}\oplus \mathfrak{g}^{1,-1})\cap \mathfrak{g}_{0}\\
&\subset \text{T}_{\bar s}G_\mathbb{R}/K\simeq \mathfrak{p}_0.
\end{align*}
This implies that the tangent map of $$\pi|_{\PP(U)}:\, \PP(U) \to G_{\mathbb R}/K $$ at $s$ is injective in the sense of stratified space, or equivalently it is injective on each $\text{T}_sL_i$ considered as generalized tangent space.

Therefore we can find a small open neighborhood $V$ of $s$ in $D$, such that the restriction of $\pi$ to $\PP(U)\cap V$,  $$\pi|_{\PP(U)\cap V}:\, \PP(U)\cap V \to G_{\mathbb R}/K,$$ is an immersion in the sense of stratified spaces, or equivalently injective on each stratum $L_i\cap U$. 
Now we take $U$ in $\T$ containing $q$ small enough such that $\PP(U)\subset V$.
With such a choice of $U$,  $\pi$ is injective on the horizontal slice $\PP(U)$ in the sense of stratified spaces, and hence injective on each stratum $L_{i}$.
\end{proof}

\begin{lemma}\label{lemma of locallybounded}
For any $z\in \P_{+}({\check{\T}})\subset \exp(\mathfrak{p}_{+})\cap D$, we have $$P_{+}^{-1}(z)\cap\linebreak \P({\check{\T}})=\pi^{-1}(z')\cap \P({\check{\T}}),$$ where $z'=\pi(z)\in G_\mathbb{R}/K$.
\end{lemma}
\begin{proof}
From Corollary \ref{X to Y}, 
the underlying real manifold of $\mathrm{exp}(\mathfrak{p}_+)\cap D$ is diffeomorphic to $G_{\mathbb{R}}/K\simeq \exp(\mathfrak{p}_{0})$. 
As described there, this diffeomorphism is given explicitly by identifying the point 
$$\text{exp}(Y)\bar{o}\in \mathrm{exp}(\mathfrak{p}_+)\cap D$$
with the point $\text{exp}(X)\in \exp(\mathfrak{p}_{0})$, where the vectors $X\in \mathfrak{p}_{0}$ and $Y\in \mathfrak{p}_+$ satisfy the relation that $X =T_0( Y +\tau_0(Y))$ for certain real number $T_{0}$.   

On the other hand, from the definition of the Hodge metric on $D$ in page~297 of \cite{GS}, we know that the natural projection $\pi:\, D\to G_{\mathbb{R}}/K$ is a Riemannian submersion with the natural homogeneous metrics on $D$ and $G_{\mathbb{R}}/K$.  For more details about this, see also Section 2 of \cite{JostYang}.

Then the real geodesic $$c(t)= \exp(tX)$$ in $\mathrm{exp}(\mathfrak{p}_+)\cap D$ with $X\in \mathfrak{p}_{0}$ connecting the based point $\bar{o}$ and any point $z\in \mathrm{exp}(\mathfrak{p}_+)\cap D$ is the horizontal lift of the geodesic $\pi(c(t))$ in $G_{\mathbb{R}}/K$. This is a basic fact in Riemannian submersion as given in, for example, Corollary~26.12 in page 339 of \cite{Michor}. 

Hence the natural projection $\pi:\,D\to G_\mathbb{R}/K$ maps $c(t)$ isometrically to $\pi(c(t))$.
From this one sees that the projection map $\pi$, when restricted
to the underlying real manifold of $\mathrm{exp}(\mathfrak{p}_+)
\cap D$, is given by the diffeomorphism 
$$ \pi_{+}:\, \mathrm{exp}(\mathfrak{p}_+) \cap D\longrightarrow \mathrm{exp}(\mathfrak{p}_0) \stackrel{\simeq}{\longrightarrow} G_\mathbb{R}/K,$$
and the diagram
$$
\xymatrix{ N_{+}\cap D \ar[r]^-{\pi} \ar[d]^-{P_{+}} & G_\mathbb{R}/K \\
\mathrm{exp}(\mathfrak{p}_+)\cap D \ar[ur]_{\pi_{+}}& ,
}$$
is commutative.
Therefore one concludes that any two points in $N_+\cap D$ are mapped to the same point in $\mathrm{exp}(\mathfrak{p}_+) \cap D$ via $P_+$, if and only if they are are mapped to the same point in $G_\mathbb{R}/K$ via $\pi$. Hence for any $$z\in \P_{+}({\check{\T}})\subset \exp(\mathfrak{p}_{+})\cap D,$$ the projection map $P_+$ maps the fiber $\pi^{-1}(z')\cap \P({\check{\T}}) $ onto the point $z\in \exp(\mathfrak{p}_+) \cap D$, where $z'=\pi(z)\in G_\mathbb{R}/K$.
\end{proof}



\begin{theorem}\label{locallybounded}
The image of the restriction of the period map $$\Phi :\, \check{\T}
\to N_+\cap D$$ is bounded in $N_+$ with respect to the Euclidean
metric on $N_+$.
\end{theorem}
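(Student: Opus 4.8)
The plan is to deduce Theorem \ref{locallybounded} from Lemma \ref{abounded}: since $W$ carries the subspace Euclidean metric of $N_+$, it suffices to show that the restricted period map $\Phi$ does not see more of $N_+$ than its projection $\Psi=P\circ\Phi$ does, and then the Euclidean bound in $W$ upgrades to a Euclidean bound in $N_+$.

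Concretely, I would first recall from Section \ref{preliminary} the block description: with the fixed base point $o=\Phi(p)$ and a fixed adapted basis, $N_+$ consists of unipotent block lower triangular matrices, $\mathfrak{n}_+=\bigoplus_{k\geq 1}\mathfrak{g}^{-k,k}$ is graded by the distance of a block below the diagonal, and $W=\exp(\mathfrak{w})$ with $\mathfrak{w}\subseteq\mathfrak{g}^{-1,1}$ abelian and $\dim_{\mathbb{C}}\mathfrak{w}=N$. For $q\in\check{\mathcal{T}}$ write $\Phi(q)=\exp(Y(q))\cdot o$ with $Y(q)\in\mathfrak{n}_+$ and decompose $Y(q)=\sum_{k\geq 1}Y^{-k,k}(q)$ according to the grading; note that $Y^{-1,1}(q)$ is just the first-subdiagonal part of the matrix $\Phi(q)$. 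The key claim is that $\Phi(\check{\mathcal{T}})$ in fact lies in the flat $W\cap D$, equivalently that the horizontal holomorphic map $\Phi|_{\check{\mathcal{T}}}$ is everywhere tangent to the integrable left-invariant distribution on $N_+$ spanned by the abelian subalgebra $\mathfrak{w}$. This holds at $p$ by the very definition of $\mathfrak{w}$, and I would propagate it to all of $\check{\mathcal{T}}$ using Griffiths transversality of $\Phi$ (so that the derivative is always horizontal, hence lies in $\mathfrak{g}^{-1,1}$), the standard theory of integral manifolds of the horizontal distribution from the references cited above, and the connectedness of $\check{\mathcal{T}}$: the set $\{q\in\check{\mathcal{T}}:\Phi(q)\in W\cap D\}$ is nonempty, open, and closed.

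Granting $\Phi(\check{\mathcal{T}})\subseteq W\cap D$, the projection $P$ restricts to the identity on the image, so $\Phi=\Psi$ there, and Lemma \ref{abounded} yields $d_E(\Phi(p),\Phi(q))\leq\sqrt{r}$ for every $q\in\check{\mathcal{T}}$, which is exactly the asserted boundedness. If one cannot force the image entirely into $W$, the fallback is to bound the matrix $\Phi(q)$ block by block: the $\mathfrak{g}^{-1,1}$-part $Y^{-1,1}(q)$ is controlled by rerunning the Harish--Chandra/$\mathfrak{sl}_2$-triple computation of Lemma \ref{abounded} (the pointwise image of $\Phi_*$ is an abelian subspace of $\mathfrak{g}^{-1,1}$ by horizontality), and the higher blocks $Y^{-k,k}(q)$, $k\geq 2$, are then bounded by writing $\Phi(q)=g(1)$ as the time-ordered exponential $I+\int A+\int\!\!\int A\,A+\cdots$ of the horizontal ODE $g'=gA$, $A\in\mathfrak{g}^{-1,1}$, along a path from $p$ to $q$, and estimating the finitely many iterated integrals in terms of the already-bounded $\mathfrak{g}^{-1,1}$-data.

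The step I expect to be the main obstacle is precisely the claim $\Phi(\check{\mathcal{T}})\subseteq W\cap D$ (or, in the fallback route, the pointwise control of $Y^{-1,1}(q)$): horizontality only tells us that each tangent space of the image is an abelian subspace of some $\mathfrak{g}^{-1,1}$, not that these abelian subspaces are a single fixed $\mathfrak{w}$ after transport to the base point, so this is where one must exploit the specific structure of the period map of marked polarized Calabi--Yau manifolds, together with, if needed, the later fact that $\mathcal{T}\setminus\check{\mathcal{T}}$ is analytic (hence $\check{\mathcal{T}}$ is connected and dense), which legitimizes working on the connected component of $p$ first.
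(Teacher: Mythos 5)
Your main route rests on the claim $\Phi(\check{\mathcal{T}})\subseteq W\cap D$, and this is where the argument breaks down: that inclusion is false in general, and the paper's proof is structured precisely to avoid asserting it. Horizontality gives that each $\Phi_*(T^{1,0}_q\check{\mathcal{T}})$ is an abelian subspace of the $\mathfrak{g}^{-1,1}$ determined by the Hodge structure at $\Phi(q)$, but after transporting back to the base point these subspaces vary with $q$ and need not equal the fixed $\mathfrak{w}$ — so your open--closed argument fails at openness ($\Phi(q)\in W$ at one point does not force the image to stay tangent to the left-invariant distribution generated by $\mathfrak{w}$). Concretely, for Calabi--Yau threefolds the blocks of $\Phi(q)$ in $\mathfrak{g}^{-k,k}$, $k\geq 2$, are governed by the prepotential, and containment in $\exp(\mathfrak{w})\cdot o$ would force the Yukawa coupling to be constant. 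What the paper actually proves is the weaker statement that the \emph{horizontal projection} $P_h(\Phi(\check{\mathcal{T}}))$ lies in $D\cap W$ (locally near $o$ by uniqueness of integral manifolds of the distribution given by $\mathfrak{w}$, then globally by irreducibility of the analytic variety $\Phi(\check{\mathcal{T}})$), and it then must separately control the fibers of $P$ restricted to $\Phi(\check{\mathcal{T}})$: it shows each fiber $P^{-1}(z)\cap\Phi(\check{\mathcal{T}})$ is finite and that its cardinality is locally constant. Those two steps are the real content, and they use machinery your proposal never touches — the compactness of $p^{-1}(z')\simeq K/V$, the discreteness of the monodromy group $\Gamma$, the compactification of $\mathcal{Z}_m$ and the Hodge-metric completion $\mathcal{Z}^H_{_m}$, and the Griffiths transversality of the extended period map $\Phi^H$ (Lemma \ref{GTonTH}).

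Your fallback does not close the gap either. Bounding the higher blocks $Y^{-k,k}(q)$ by the iterated integrals of the horizontal ODE $g'=gA$ requires a bound on $\int_0^1\|A(t)\|\,dt$ along a path from $p$ to $q$, i.e.\ on the length of the path in the $\mathfrak{g}^{-1,1}$-direction; a pointwise bound on the first-subdiagonal block of $\Phi(q)$ (endpoint data) gives no control over such path integrals, since the path may oscillate with unbounded total variation. Moreover, $\mathfrak{w}$ is in general a proper subspace of $\mathfrak{g}^{-1,1}$, so Lemma \ref{abounded} only bounds the component of $\Phi(q)$ along $\mathfrak{w}$, not the whole $\mathfrak{g}^{-1,1}$-block; ``rerunning the $\mathfrak{sl}_2$-computation pointwise'' would need the varying abelian subspaces $\Phi_*(T^{1,0}_q\check{\mathcal{T}})$ to be conjugated into a single $\mathfrak{a}_0$ uniformly in $q$, which is exactly the difficulty you flagged but did not resolve.
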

\begin{proof}
In Lemma \ref{abounded}, we have already proved that the image of
$\P_{+}=P_{+}\circ \P$ is bounded with respect to the Euclidean metric on
$\exp(\mathfrak{p}_{+})\subseteq N_+$. Now together with the Griffiths transversality, we
will deduce the boundedness of the image of $\P :\, \check{\T} \to
N_+\cap D$ from the boundedness of the image of $\P_{+}$.

Our proof can be divided into two steps. It is an elementary argument to apply the Griffiths transversality on $\TT$.

(i) We claim that there are only finite points in the inverse image $$(P_{+}|_{\P(\check{\T})})^{-1}(z)$$ for any $z\in \P_{+}(\check{\T})$. Here $P_{+}|_{\P(\check{\T})}$ denotes the restriction of $P_+$ to $\P(\check{\T})$.

Otherwise, by Lemma \ref{lemma of locallybounded}, we have $\{q_i\}_{i=1}^{\infty}\subseteq \check{\T}$ and 
$$\{y_i=\P(q_i)\}_{i=1}^{\infty}\subseteq (P_{+}|_{\P(\check{\T})})^{-1}(z)$$ with limiting point $y_{\infty}\in \pi^{-1}(z')\simeq K/V$, since $K/V$ is compact. We project the points $q_i$ to $q'_i\in \Z$ via the universal covering map $\pi_{m}:\, \T \to \Z$. There must be infinite many $q'_i$'s. Otherwise, we have a subsequence $\{q_{j_k}\}$ of $\{q_j\}$ such that $\pi_{m}(q_{j_k})=q'_{i_0}$ for some $i_0$ and
$$y_{j_k}=\P(q_{j_k})=\gamma_{k}\P(q_{j_0})=\gamma_{k}y_{j_0},$$
where $\gamma_k \in \Gamma$ is the monodromy action. Since $\Gamma$ is discrete, the subsequence $\{y_{j_k}\}$ is not convergent, which is a contradiction.

Now we project the points $q_i$ on $\Z$ via the universal covering map $\pi_{m}:\, \T \to \Z$ and still denote them by $q_i$ without confusion. Then the sequence $\{q_i\}_{i=1}^{\infty}\subseteq \Z$ has a limiting point $q_\infty$ in $\bar{\mathcal{Z}}_{m}$, where $\bar{\mathcal{Z}}_{m}$ is the compactification of $\Z$.
By continuity the period map $\Phi :\,  \Z \to D/\Gamma$ can be extended over $q_\infty$ with $$\Phi(q_\infty)=\pi_D(y_\infty) \in D/\Gamma,$$ where $\pi_D :\, D\to D/\Gamma$ is the projection map. Thus $q_\infty$ lies $\ZZ$. 

Now we regard the sequence $\{q_i\}_{i=1}^{\infty}$ as a convergent sequence in $\ZZ$ with limiting point $q_\infty \in \ZZ$. Let $V$ be a small open neighborhood of $q_{\infty}$, and $\tilde{q}_{\infty}$ be its lifting to $\T^{H}_{m}$. Let $U$ be a small open neighborhood of $\tilde{q}_{\infty}$ such that $\pi_{m}^{H}:\, U\to V$ is a diffeomorphism.

 We can choose a sequence $\{\tilde{q}_i\}_{i=1}^{\infty}\subseteq \TT$ with limiting point $\tilde{q}_\infty\in \TT$ such that $\tilde{q}_i\in U$ maps to $q_i\in V$ for $i$ large via the universal covering map $\pi_{m}^{H} :\,  \TT\to \ZZ$ and $$\PP(\tilde{q}_i)=y_i \in D,$$ for $i\ge 1$ and $i=\infty$. Since the extended period map $$\PP :\, \TT \to D$$ still satisfies the Griffiths transversality by Lemma \ref{extendedtransversality}, we can choose the neighborhood $U$ of $\tilde{q}_\infty$ small enough such that $\PP(U)=\cup_{i}L_{i}$ is a disjoint union of Whitney stratifications in Lemma \ref{locally injective} and $\pi$ is injective on each $L_{i}$.
By passing to a subsequence, we may assume that the points $\tilde{q}_i$ for $i$ sufficiently large are mapped into some stratum $L_i$, which is a contradiction.

Denote $P_{+}|_{\PP(\TT)}$ to be the restricted map $$P_{+}|_{\PP(\TT)}:\, N_{+}\cap \PP(\TT) \to \exp(\mathfrak{p}_{+})\cap D.$$
In fact, a similar argument also proves that there are only finite points in\linebreak $(P_{+}|_{\PP(\TT)})^{-1}(z)$, for any $z\in \P_{+}(\check{\T})$. Furthermore, we have the following conclusion.

(ii) The restricted map $$P_{+}|_{\PP(\TT)}:\, N_{+}\cap \PP(\TT) \to \exp(\mathfrak{p}_{+})\cap D$$ is a finite holomorphic ramified covering map onto its image. The proof is a direct application of some basic results in the book of Grauert--Remmert \cite{GR}.

From (i) and the definition of finite map in analytic geometry as given in page 47 of  \cite{GR}, we only need to show that $P_{+}|_{\PP(\TT)}$ is closed.
In fact, we know that $${\Phi}_{{\mathcal{Z}^H_m}}:\, {\mathcal{Z}}^H_m\to D/\Gamma$$ is a proper map by the result of Griffiths, and hence ${\Phi}_{{\mathcal{Z}^H_m}}( {\mathcal{Z}^H_m})$ is closed in $D/\Gamma$. So $$\PP(\TT)=\pi_{D}^{-1}({\Phi}_{{\mathcal{Z}^H_m}}( {\mathcal{Z}^H_m}))$$ is also closed in $D$, where $\pi_{D} : \, D \to D/\Gamma$ is the projection map.
Hence any closed subset $E$ of $\PP(\TT)$ is also a closed subset of $D$. 
Since the natural projection map $$\pi:\, D \to G_{\mathbb{R}}/K$$ is a proper map, one sees that $\pi$ is a closed map, which implies that $\pi(E)$ is closed in $G_{\mathbb{R}}/K$. Moreover, from the proof of Lemma \ref{lemma of locallybounded}, one  sees that $P_{+}(E)$ is diffeomorphic to $\pi(E)$ through the diffeomorphism $$\exp(\mathfrak{p}_{+})\cap D\simeq \mathfrak{p}_0\simeq G_{\mathbb R}/K,$$ which implies that $P_{+}(E)$ is closed in $\exp(\mathfrak{p}_{+})\cap D$.
Therefore we have proved that $P_{+}|_{\PP(\TT)}$ is a closed map.

As proved in page 171 of \cite{GR},  the image of an irreducible complex variety under holomorphic map is still irreducible. 
Since $\TT$ is irreducible, we know that $\PP(\TT)$ is an irreducible analytic subvariety of $\check{D}$. 
The intersection $N_{+}\cap \PP(\TT)$ is equal to $\PP(\TT)$ minus the proper analytic subvariety $\PP(\TT)\cap (\check{D}\setminus N_{+})$.
Hence, from the results in page 171 of  \cite{GR}, we get that  $N_{+}\cap \PP(\TT)$ and $P_{+}(N_{+}\cap \PP(\TT))$ are both irreducible. 

 By the result in page 179  of \cite{GR}, the projection map 
$$P_{+}|_{\PP(\TT)} :\, N_{+}\cap \PP(\TT)\to P_{+}(N_{+}\cap \PP(\TT))$$ 
is a finite holomorphic map, or equivalently, finite ramified covering map. Let $r(z)$ be the cardinality of the fiber $(P_{+}|_{\PP(\TT)})^{-1}(z)$ for any $$z\in P_{+}(N_{+}\cap \PP(\TT))$$ outside the ramification locus. From the result in page 135 in \cite{GR}, we know that $r(z)=r$ is constant on $P_{+}(N_{+}\cap \PP(\TT))$ outside the ramified locus which is an analytic subset. From the above proof of (i), we also know that $r$ is finite. Hence $P_{+}|_{\PP(\TT)}$ is an $r$-sheeted ramified covering,
which together with Lemma \ref{abounded},  implies that the image $\P(\check{\T})\subseteq N_+\cap D$ is bounded.
\end{proof}

In early versions of this paper we wrote a more elementary proof of (ii), which only used the Griffiths transversality and a simple limiting argument similar to the proof of (i). The new proof we give here is more illuminating, since it has the advantage of involving more geometric structures of the image of period map and period domain.

\begin{lemma}\label{transversal}Let $p\in\mathcal{T}$ be the base point with $\Phi(p)=\{F^n_p\subseteq F^{n-1}_p\subseteq \cdots\subseteq F^0_p\}.$ Let $q\in \mathcal{T}$ be any point with $\Phi(q)=\{F^n_q\subseteq F^{n-1}_q\subseteq \cdots\subseteq F^0_q\}$, then $\Phi(q)\in N_+$ if and only if $F^{k}_q$ is isomorphic to $F^k_p$ for all $0\leq k\leq n$.
\end{lemma}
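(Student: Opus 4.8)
The plan is to first read off what ``$F^k_q$ is isomorphic to $F^k_p$'' should mean and then reduce everything to a linear-algebra statement about one matrix representing $\Phi(q)$. Since the base point $p$ lies in $D$, the reference decomposition $\{H^{j,n-j}_p\}$ is a genuine Hodge structure, so $\overline{H^{j,n-j}_p}=H^{n-j,j}_p$ and hence $H_{\mathbb{C}}=F^k_p\oplus\overline{F^{n-k+1}_p}$ with $\overline{F^{n-k+1}_p}=\bigoplus_{j<k}H^{j,n-j}_p$. I will take ``$F^k_q$ isomorphic to $F^k_p$'' to mean that the natural projection $H_{\mathbb{C}}\to H_{\mathbb{C}}/\overline{F^{n-k+1}_p}\cong F^k_p$ restricts to a linear isomorphism on $F^k_q$, equivalently $F^k_q\cap\overline{F^{n-k+1}_p}=\{0\}$, equivalently $F^k_q\oplus\overline{F^{n-k+1}_p}=H_{\mathbb{C}}$. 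Working in the adapted basis $(\eta_0,\dots,\eta_{m-1})$ for the Hodge decomposition of $o=\Phi(p)$, one has $F^k_p=\text{Span}\{\eta_0,\dots,\eta_{f^k-1}\}$ and $\overline{F^{n-k+1}_p}=\text{Span}\{\eta_{f^k},\dots,\eta_{m-1}\}$, and an element of $N_+$ (resp.\ $B$) is a block lower triangular matrix with identity diagonal blocks (resp.\ a nonsingular block upper triangular matrix). A one-line determinant computation then shows: if $\Phi(q)=gB$ with $g\in G_{\mathbb{C}}$, so that $F^k_q=gF^k_p$ is spanned by the first $f^k$ columns of $g$, then $F^k_q\oplus\overline{F^{n-k+1}_p}=H_{\mathbb{C}}$ if and only if the leading principal $f^k\times f^k$ submatrix of $g$ is nonsingular. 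So the lemma becomes: $g\in N_+B$ if and only if the leading principal submatrices of $g$ of sizes $f^n,f^{n-1},\dots,f^1$ are all nonsingular.

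For the ``only if'' direction I would write $g=nb$ with $n\in N_+$, $b\in B$; since $b$ preserves the reference filtration, $gF^k_p=nF^k_p$, and the top-left $f^k\times f^k$ block of $n$ is again block lower triangular with identity diagonal blocks, hence nonsingular. This gives the transversality for all $k$ at once.

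For the ``if'' direction -- which I expect to be the main obstacle -- I would first invoke the block $LU$ (Gauss) factorization theorem: relative to the partition of $\{0,\dots,m-1\}$ into consecutive blocks of sizes $h^{n,0},h^{n-1,1},\dots,h^{0,n}$ (whose partial sums are exactly $f^n,f^{n-1},\dots,f^0$), the nonsingularity of the leading principal submatrices of $g$ of sizes $f^n,\dots,f^1$ is precisely the condition for a unique factorization $g=LU$ with $L$ block lower triangular with identity diagonal blocks and $U$ block upper triangular, both in $GL_m(\mathbb{C})$. The real content is to upgrade this to a factorization inside $G_{\mathbb{C}}$. For this I would use that the Gram matrix $J$ of $Q$ in the basis $(\eta_i)$ is block anti-diagonal, by the Hodge--Riemann relation $Q(H^{a,n-a}_p,H^{b,n-b}_p)=0$ unless $a+b=n$. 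From $g^{T}Jg=J$ one gets $L^{T}JL=U^{-T}JU^{-1}$; the left-hand side is block anti-upper-triangular (a product of the block upper triangular $L^{T}$, the block anti-diagonal $J$, and the block lower triangular $L$), while the right-hand side is block anti-lower-triangular, so both equal a block anti-diagonal matrix; comparing anti-diagonal blocks, and using that $L$ has identity diagonal blocks, forces $L^{T}JL=J$, i.e.\ $L\in G_{\mathbb{C}}$. Since $L$ is unipotent, $\log L$ is a nilpotent, block strictly lower triangular element of $\mathfrak{g}$, hence $\log L\in\mathfrak{n}_+$ and $L=\exp(\log L)\in N_+$; then $U=L^{-1}g\in G_{\mathbb{C}}$ is block upper triangular, hence stabilizes the reference flag, so $U\in B$. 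Therefore $g=LU\in N_+B$ and $\Phi(q)\in N_+B/B=N_+$, completing the proof.

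As a conceptual check I would note that the lemma is the ``big cell'' characterization in disguise: $N_+B=B^-B$, where $B^-$ is the opposite parabolic with Lie algebra $\mathfrak{g}^{0,0}\oplus\mathfrak{n}_+$ (indeed $B^-=G^0\ltimes N_+$ with $G^0=\exp(\mathfrak{g}^{0,0})\subseteq B$ normalizing $N_+$, so $B^-B=G^0N_+B=N_+G^0B=N_+B$), and $\Phi(q)=gB$ lies in the open $B^-$-orbit $B^-B/B$ exactly when the flag $F^{\bullet}_q$ is in general position with the $B^-$-fixed flag, whose $k$-th term is $\overline{F^k_p}$; this yields the same transversality conditions. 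I would nonetheless carry out the matrix argument above, since it stays within the adapted-basis/block-matrix formalism already set up in the paper; the only mildly delicate auxiliary point, that $F^k_q\oplus\overline{F^{n-k+1}_p}=H_{\mathbb{C}}$ is genuinely equivalent to nonsingularity of a leading principal submatrix of $g$, is settled by the determinant computation mentioned in the first paragraph.
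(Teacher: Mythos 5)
Your proposal is correct and follows the same basic strategy as the paper's proof: express $\Phi(q)$ by a transition matrix relative to the adapted basis at the base point, observe that membership in $N_+B/B$ amounts to a block $LU$ factorization, and identify the nonvanishing of the leading principal block minors with the transversality of $F^k_q$ against the reference structure. You go further than the paper in two useful ways. First, you pin down what ``$F^k_q$ is isomorphic to $F^k_p$'' must mean (namely $F^k_q\oplus\overline{F^{n-k+1}_p}=H_{\mathbb C}$, the projection along $\bigoplus_{j<k}H^{j,n-j}_p$ being an isomorphism); the paper leaves this implicit, and read literally the phrase is vacuous since the spaces have equal dimension. Second, and more substantively, the paper asserts that the block $LU$ factorization of the transition matrix puts $\Phi(q)$ in $N_+B/B$ without checking that the unipotent factor $L$ actually lies in $G_{\mathbb C}$ (recall $N_+=\exp(\mathfrak n_+)\subseteq G_{\mathbb C}$, not the full group of block lower unipotent matrices). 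Your argument closes this gap: starting from $g\in G_{\mathbb C}$ with $g^TJg=J$ and $J$ block anti-diagonal by the first Hodge--Riemann relation, the comparison of $L^TJL$ (anti-upper-triangular) with $U^{-T}JU^{-1}$ (anti-lower-triangular) forces $L^TJL=J$, whence $\log L\in\mathfrak n_+$ and $L\in N_+$. This is a genuine improvement in rigor over the published argument, at the cost of a slightly longer computation.
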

\begin{proof}For any $q\in \mathcal{T}$, we choose an arbitrary adapted basis $\{\zeta_0, \dots, \zeta_{m-1}\}$ for the given Hodge filtration $\{F^n_q\subseteq F^{n-1}_q\subseteq\cdots\subseteq F^0_q\}$. Recall that $\{\eta_0, \dots,\linebreak \eta_{m-1}\}$ is the adapted basis for the Hodge filtration $\{F^n_p \subseteq F^{n-1}_p\subseteq \cdots\subseteq F^0_p\}$ for the base point $p$. Let $[A^{i,j}(q)]_{0\leq i,j\leq n}$ be the transition matrix between the basis $\{\eta_0,\dots, \eta_{m-1}\}$ and $\{\zeta_0, \dots, \zeta_{m-1}\}$ for the same vector space $H^{n}(M, \mathbb{C})$, where $A^{i,j}(q)$ are the corresponding blocks.

Recall that elements in $N_+$ and $B$ have matrix representations with the fixed adapted basis at the base point: elements in $N_+$ can be realized as nonsingular block lower triangular matrices with identity blocks in the diagonal; elements in $B$ can be realized as nonsingular block upper triangular matrices. Therefore  $$\Phi(q)\in N_+=N_+B/B\subseteq \check{D}$$ if and only if its matrix representation $[A^{i,j}(q)]_{0\leq i,j\leq n}$ can be decomposed as $L(q)\cdot U(q)$, where $L(q)$ is a nonsingular block lower triangular matrix with identities in the diagonal blocks, and $U(q)$ is a nonsingular block upper triangular matrix. 

By basic linear algebra, we know that $[A^{i,j}(q)]$ has such decomposition if and only if $\det[A^{i,j}(q)]_{0\leq i, j\leq k}\neq 0$ for any $0\leq k\leq n$. In particular, we know that $[A(q)^{i,j}]_{0\leq i,j\leq k}$ is the transition map between the bases of $F^k_p$ and $F^k_q$. Therefore, $\det([A(q)^{i,j}]_{0\leq i,j\leq k})\neq 0$ if and only if $F^k_q$ is isomorphic to $F^k_p$.
\end{proof}\begin{lemma}\label{codimension}The subset $\check{\mathcal{T}}$ is an open dense submanifold in $\mathcal{T}$, and $\mathcal{T}\backslash \check{\mathcal{T}}$ is an analytic subvariety of $\mathcal{T}$ with $\text{codim}_{\mathbb{C}}(\mathcal{T}\backslash \check{\mathcal{T}})\geq 1$.
\end{lemma}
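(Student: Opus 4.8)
The plan is to read the local defining equations of $\mathcal{T}\setminus\check{\mathcal{T}}$ directly off Lemma \ref{transversal}. Recall $\check{\mathcal{T}}=\Phi^{-1}(N_+)$, and that by Lemma \ref{transversal} a point $q\in\mathcal{T}$ lies in $\check{\mathcal{T}}$ exactly when $\det([A^{i,j}(q)]_{0\le i,j\le k})\neq 0$ for all $0\le k\le n$, where $[A^{i,j}(q)]$ is the transition matrix from the fixed adapted basis $\{\eta_0,\dots,\eta_{m-1}\}$ at the base point to an adapted basis for the Hodge filtration of $q$; equivalently, $q\in\check{\mathcal{T}}$ iff $\prod_{k=0}^{n}\det([A^{i,j}(q)]_{0\le i,j\le k})\neq 0$. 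First I would promote the functions $h_k(q):=\det([A^{i,j}(q)]_{0\le i,j\le k})$ to genuine local holomorphic functions: since $\Phi$ is holomorphic and the Hodge bundles $F^k$ are holomorphic subbundles of the trivial bundle $H^n(M,\mathbb{C})\times\mathcal{T}$ (Section \ref{section period map}), near any point of $\mathcal{T}$ there is a holomorphically varying adapted frame for $F^\bullet_q$, and with such a choice $q\mapsto [A^{i,j}(q)]$ is holomorphic, so each $h_k$ is holomorphic on that chart. A change of adapted frame of $F^k_q$ multiplies $[A^{i,j}(q)]_{0\le i,j\le k}$ on the right by an invertible block upper-triangular matrix, hence multiplies $h_k$ by a nowhere-vanishing holomorphic factor, so the zero set $\{h_k=0\}$ is independent of all choices. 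Therefore $\mathcal{T}\setminus\check{\mathcal{T}}=\bigcup_{k=0}^{n}\{h_k=0\}$ is, locally, the zero set of the single holomorphic function $\prod_{k=0}^{n}h_k$, and is an analytic subvariety of $\mathcal{T}$.

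Next I would verify that $\check{\mathcal{T}}$ is a nonempty open dense submanifold. It is nonempty because $\Phi(p)=o$ is the identity coset of $N_+B/B$, so $p\in\check{\mathcal{T}}$; it is open because its complement has just been exhibited as an analytic subvariety (equivalently, $N_+\cong\mathbb{C}^d$ is the big Bruhat cell, hence Zariski-open in $\check D$, and $\Phi$ is continuous). By Proposition \ref{imp} the manifold $\mathcal{T}$ is connected, hence irreducible as a complex space; a proper analytic subvariety of such a space is nowhere dense and has codimension at least one. Since $p\notin\mathcal{T}\setminus\check{\mathcal{T}}$, this subvariety is proper, so $\text{codim}_{\mathbb{C}}(\mathcal{T}\setminus\check{\mathcal{T}})\ge 1$ and $\check{\mathcal{T}}$ is dense; being open in a complex manifold, it is itself a complex submanifold.

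I do not anticipate a real difficulty. The only points needing a little care are: (a) arranging the $h_k$ as honest holomorphic functions with a frame-independent zero locus, so that the term \emph{analytic subvariety} is meaningful globally and not merely chart by chart; and (b) recording that $N_+$ is open in $\check D$ so that $\check{\mathcal{T}}$ is open. Both follow quickly once Lemma \ref{transversal} and the holomorphicity of the Hodge bundles are in hand.
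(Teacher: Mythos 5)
Your proof is correct and follows essentially the same route as the paper: both use the determinant criterion of Lemma \ref{transversal} to exhibit $\mathcal{T}\setminus\check{\mathcal{T}}$ as an analytic subvariety, and then deduce $\text{codim}_{\mathbb{C}}\geq 1$ from the connectedness of $\mathcal{T}$ together with the fact that the base point $p$ lies in $\check{\mathcal{T}}$. The only cosmetic difference is that you realize the determinants as local holomorphic functions on $\mathcal{T}$ via holomorphic adapted frames, whereas the paper first describes $\check{D}\setminus N_+$ as an analytic subvariety of $\check{D}$ and then takes its preimage under the holomorphic map $\Phi$.
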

\begin{proof}
From Lemma \ref{transversal}, one can see that $\check{D}\setminus N_+\subseteq \check{D}$ is defined as an analytic subvariety by equation
\begin{align*}
\Pi_{0\leq k\leq n}\det [A^{i,j}(q)]_{0\leq i,j\leq k}=0 .
\end{align*}
Therefore $N_+$ is dense in $\check{D}$, and that $\check{D}\setminus N_+$ is an analytic subvariety, which is closed in $\check{D}$ and $\text{codim}_{\mathbb{C}}(\check{D}\backslash N_+)\geq 1$. We consider the period map  $\Phi:\,\mathcal{T}\to \check{D}$ as a holomorphic map to $\check{D}$, then $$\mathcal{T}\setminus \check{\mathcal{T}}=\Phi^{-1}(\check{D}\setminus N_+)$$ is the preimage of $\check{D}\setminus N_+$ of the holomorphic map $\Phi$. Therefore $\mathcal{T}\setminus \check{\mathcal{T}}$ is also an analytic subvariety and a closed set in $\mathcal{T}$. Because $\mathcal{T}$ is smooth and connected, $\mathcal{T}$ is irreducible. If $\dim(\mathcal{T}\setminus \check{\mathcal{T}})=\dim\mathcal{T}$, then $\mathcal{T}\setminus \check{\mathcal{T}}=\mathcal{T}$ and $\check{\mathcal{T}}=\emptyset$, but this contradicts to the fact that the reference point $p$ is in $\check{\mathcal{T}}$. Thus we conclude that $\dim(\mathcal{T}\!\setminus\! \check{\mathcal{T}})\!<\!\dim\mathcal{T}$, and consequently $\text{codim}_{\mathbb{C}}(\mathcal{T}\backslash \check{\mathcal{T}})\!\geq\! 1$.
\end{proof}
\begin{remark}
We can also prove this lemma in a more direct manner. By using notation in the proof of Lemma \ref{transversal}, for any $q\in\mathcal{T}$, let $U_q$ be a neighborhood of $q$ such that all Hodge bundles $\{F^k\}_{0\leq k\leq n}$ are trivial over $U_q$. For any $r\in U_q$, let $\{\zeta(r)=\{\zeta_0(r), \dots, \zeta_{m-1}(r)\}\}_{r\in U_q}$ be a holomorphic family of adapted bases for the Hodge filtrations over $U_q$, where $\zeta(r)$ is an adapted basis for the Hodge filtration at $r\in U_q$. Let $\{A(r)\}_{r\in U_q}$ be the holomorphic family of transition matrices, where $A(r)$ is the transition matrix between the adapted basis $\eta$ to the Hodge filtration at the reference point $p$ and the adapted basis $\zeta(r)$ to the Hodge filtration at any point $r\in U_q$. 

From the definition of $\check{\mathcal{T}}$, we get that $r\in U_q\setminus (U_q\cap \check{\mathcal{T}})$ if and only if $r$ satisfies the following local holomorphic equation,
\begin{align*}
\Pi_{0\leq k\leq n}\det [A^{i,j}(r)]_{0\leq i,j\leq k}=0.
\end{align*}
Since $\mathcal{T}$ is irreducible and $\check{\mathcal{T}}\neq \emptyset$, we have that $\mathcal{T}\setminus\check{\mathcal{T}}$ is a divisor on $\mathcal{T}$. Therefore the complex codimension of $\mathcal{T}\setminus\check{\mathcal{T}}$ in $\mathcal{T}$ is greater than or equal to $1$.
\end{remark}
\begin{corollary}\label{image}The image of $$\Phi:\mathcal{T}\rightarrow D$$ lies in $N_+\cap D$ and is bounded with respect to the Euclidean metric on $N_+$.
\end{corollary}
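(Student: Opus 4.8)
The plan is to combine Theorem~\ref{locallybounded} with Lemma~\ref{codimension} via the Riemann extension theorem. First I would record that $N_+ = N_+B/B$ is the big cell in $\check D = G_{\mathbb C}/B$, hence an open subset of $\check D$ biholomorphic to $\mathbb C^d$ with $d = \dim_{\mathbb C}\mathfrak n_+$; with the fixed adapted basis at the base point these $d$ coordinates are precisely the entries of the sub-diagonal blocks of the block lower triangular matrices representing elements of $N_+$. Regarding $\Phi:\,\mathcal T\to\check D$ as a holomorphic map into $\check D$, on the open subset $\check{\mathcal T}=\Phi^{-1}(N_+)$ we may write $\Phi = (\Phi_1,\dots,\Phi_d)$ in these affine coordinates, so that each $\Phi_j:\,\check{\mathcal T}\to\mathbb C$ is holomorphic; by Theorem~\ref{locallybounded} there is a constant $C<\infty$ with $|\Phi_j|\le C$ on $\check{\mathcal T}$ for every $j$.

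Next I would invoke Lemma~\ref{codimension}: the set $\mathcal T\setminus\check{\mathcal T}$ is an analytic subvariety of the connected complex manifold $\mathcal T$ of codimension at least one, hence nowhere dense, so $\check{\mathcal T}$ is open and dense. Since each $\Phi_j$ is a bounded holomorphic function on the complement of this subvariety, the Riemann extension theorem yields a holomorphic extension $\widetilde\Phi_j:\,\mathcal T\to\mathbb C$, which still satisfies $|\widetilde\Phi_j|\le C$ on $\mathcal T$ by continuity. Assembling these gives a holomorphic map $\widetilde\Phi = (\widetilde\Phi_1,\dots,\widetilde\Phi_d):\,\mathcal T\to\mathbb C^d\cong N_+\subseteq\check D$.

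Finally I would check that $\widetilde\Phi$ coincides with $\Phi$: both are continuous maps $\mathcal T\to\check D$ agreeing on the dense subset $\check{\mathcal T}$, and $\check D$ is Hausdorff, so $\widetilde\Phi=\Phi$ on all of $\mathcal T$. In particular $\Phi(\mathcal T)\subseteq N_+$; since the period map takes values in $D$ by definition, $\Phi(\mathcal T)\subseteq N_+\cap D$, and the uniform bound $|\Phi_j|\le C$ shows that $\Phi(\mathcal T)$ is bounded in $N_+$ with respect to the Euclidean metric. The substantive work is already contained in the two cited results; the only point to handle carefully is that $N_+$ is realized as an \emph{open} chart of $\check D$, which is what lets a $\check D$-valued extension be reinterpreted as an $N_+$-valued (hence bounded) map so that the density/continuity argument closes the proof.
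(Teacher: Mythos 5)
Your proposal is correct and follows essentially the same route as the paper's proof: both combine the boundedness of $\Phi|_{\check{\mathcal{T}}}$ from Theorem \ref{locallybounded} with the codimension estimate of Lemma \ref{codimension}, apply the Riemann extension theorem on the complement of the analytic subvariety $\mathcal{T}\setminus\check{\mathcal{T}}$, and then identify the extension with $\Phi$ by agreement on the dense open set $\check{\mathcal{T}}$. Your extra care in realizing $N_+$ as an open affine chart of $\check{D}$ and extending coordinate-by-coordinate is a slightly more explicit version of the same argument.
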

\begin{proof}According to Lemma \ref{codimension}, $\mathcal{T}\backslash\check{\mathcal{T}}$ is an analytic subvariety of $\mathcal{T}$ and the complex codimension of $\mathcal{T}\backslash\check{\mathcal{T}}$ is at least one. By Theorem \ref{locallybounded}, the holomorphic map $\Phi:\,\check{\mathcal{T}}\rightarrow N_+\cap D$ is bounded in $N_+$ with respect to the Euclidean metric. Thus by the Riemann extension theorem, there exists a holomorphic map $\Phi': \,\mathcal{T}\rightarrow N_+\cap D$ such that $$\Phi'|_{{\check{\mathcal{T}}}}=\Phi|_{{\check{\mathcal{T}}}}.$$ Since as holomorphic maps, $\Phi'$ and $\Phi$ agree on the open subset $\check{\mathcal{T}}$, they must be the same on the entire $\mathcal{T}$. Therefore, the image of $\Phi$ is in $N_+\cap D$, and the image is bounded with respect to the Euclidean metric on $N_+.$ As a consequence, we also get $\mathcal{T}=\check{\mathcal{T}}=\Phi^{-1}(N_+).$ 
\end{proof}

Recall that in Section \ref{Hodge metric completion}, we have proved that $\Phi_{m}:\, \T_{m}\to D$ is holomorphic with $$\Phi_{m}(\T_{m})=\PP(i_{m}(\T))=\Phi(\T)$$ and $\text{codim}_{\C}(\TT\setminus \T_{m})\ge 1$. Corollary \ref{image} implies that the image of $$\P_m:\, \T_m \to N_+\cap D$$ is bounded in $N_+$.

\begin{corollary}\label{extended-boundedness}The image of $$\PP:\TT\!\rightarrow\! D$$ lies in $N_+\!\cap\! D$ and is bounded with respect to the Euclidean metric on $N_+$.
\end{corollary}
\begin{proof}
This follows directly from boundedness of  $\Phi$, and that $\Phi_m^H$ is a continuous extension of $\Phi$ to $\T^H_m$.  Indeed let $q\in \T^H_m$ and $\{ q_n\}$ be a sequence of points in $\T$ with limit $q$.
Then we have $$\P^H_m(q)=\lim_{n\to\infty}\, \P(q_n).$$ Therefore the boundedness of $\P$ gives the boundedness of $\P^H_m$.
\end{proof}

\section{Affine structures and injectivity of extended period map}\label{Holomorphic affine}
In Section \ref{affine on T},  we introduce the abelian subalgebra $\mathfrak{a}$, the abelian Lie group $A=\text{exp}(\mathfrak{a})$, and the projection map $P:\, N_+\cap D\to A\cap D$.
Then from local Torelli for Calabi--Yau manifolds, we show that the holomorphic map $$\Psi:\, {\T}\to A\cap D \subset A\simeq \C^{N}$$ defines a global affine structure on $\T$.

In Section \ref{affine on TH}, we consider the extended period map $$\Psi_{m}^{H}:\, \TT \to A\cap D,$$ where $\Psi_{m}^{H}=P\circ \PP$. Then by using the affine structure, local Torelli for Calabi--Yau manifolds and extension of Hodge bundles we show that $\Psi_{m}^{H}$ is nondegenerate and hence defines a global affine structure on $\TT$. A lemma of Griffiths-Wolf in \cite{GW} tells us that the completeness of $\TT$ with Hodge metric implies that $$\Psi_{m}^{H}:\, \TT \to A\cap D$$ is a covering map.
In Section \ref{injective}, we prove that $\Psi^H_{m}$ is an injection by using the Hodge metric completeness of $\T^{H}_{m}$, and the global holomorphic affine structure on $\mathcal{T}^H_{m}$. As a corollary, we show that the holomorphic map $\Phi_{m}^H$ is an injection.

\subsection{Affine structure on the Teichm\"uller space}\label{affine on T}
Let us consider $$\mathfrak{a}=d\P_{p}(\text{T}_{p}^{1,0}\T)\subseteq \text{T}^{1,0}_oD\simeq \mathfrak{n}_+$$ where $p$ is the base point in $\T$ with $\P(p)=o$.
Then by Griffiths transversality, $\mathfrak{a}\subseteq \g^{-1,1}$ is an abelian subalgebra of $\mathfrak{n}_+$ determined by the tangent map of the period map
$$d\Phi :\, \text{T}^{1,0}{\T} \to \text{T}^{1,0}D.$$
 Consider the corresponding Lie group
$$A\triangleq \exp(\mathfrak{a})\subseteq N_+.$$
Then $A$ can be considered as a complex Euclidean subspace of $N_{+}$ with the induced Euclidean metric from $N_+$.



Define the projection map $P:\, N_+\cap D\to A \cap D$ by
$$ P=\text{exp}\circ p\circ \text{exp}^{-1}$$
where $\text{exp}^{-1}:\, N_+ \to \mathfrak{n}_+$ is the inverse of the isometry $\text{exp}:\, \mathfrak{n}_+ \to N_+$, and $p:\, \mathfrak{n}_+\to \mathfrak{a}$ is the projection map from the complex Euclidean space $\mathfrak{n}_+$  to its  Euclidean subspace $\mathfrak{a}$.

The period map $\P : \, {\T}\to N_+\cap D$ composed
with the projection map $P$ gives a holomorphic map
\begin{equation}\label{maptoA}
\Psi :\,  {\T} \to A\cap D
\end{equation}
where $\Psi=P \circ \P$.
Similarly we define $\Psi_{m}^{H} :\, \TT \to A\cap D$ by  $\Psi_{m}^{H}= P\circ \Phi_{m}^{H}$.
We will prove that the map in \eqref{maptoA} defines a global affine structure on the Teichm\"uller space $\T$. First we review the definition of complex affine structure on a complex manifold.

\begin{definition}
Let $M$ be a complex manifold of complex dimension $n$. If there
is a coordinate cover $\{(U_i,\,\phi_i);\, i\in I\}$ of M such
that $\phi_{ik}=\phi_i\circ\phi_k^{-1}$ is a holomorphic affine
transformation on $\mathbb{C}^n$ whenever $U_i\cap U_k$ is not
empty, then $\{(U_i,\,\phi_i);\, i\in I\}$ is called a complex
affine coordinate cover on $M$ and it defines a holomorphic affine structure on $M$.
\end{definition}

Now we are ready to prove the following theorem which is the main result of this subsection.
\begin{theorem}\label{affine structure}
The holomorphic map $\Psi:\, \T\to A\cap D\subset A\simeq \C^{N}$ defines a global holomorphic affine structure on $\T$.
\end{theorem}
\begin{proof} Let $\mathcal{H}$ denote the Hodge subbundle
$\text{Hom}({F}^n, {F}^{n-1}/{F}^n)$. Recall that we have used the same notations for Hodge bundles on $D$ and $\T$.
Then by local Torelli theorem
for Calabi--Yau manifolds, we have the isomorphism
\begin{align}\label{local assumption''}
d\P:\, \text{T}^{1,0}\T\stackrel{\sim}{\longrightarrow} \mathcal{H}.
\end{align}

Let $\mathcal{H}_{A}$ be the restriction of $\mathcal{H}$ to $A\cap D$. Since $\mathcal{H}_{A}$ is a Hodge subbundle, we have the natural commutative diagram
\begin{align*}
\xymatrix{
\mathcal{H}_{A}|_{o} \ar[rr]^-{d\exp(X)}_{\simeq} \ar[d]^-{\simeq} && \mathcal{H}_{A}|_{s}\ar[d]\\
\text{T}_{o}^{1,0}(A\cap D) \ar[rr]^-{d\exp(X)}_{\simeq} && \text{T}_{s}^{1,0}(A\cap D)}
\end{align*}
at any point $s=\exp(X)$ in $A\cap D$ with $X\in \mathfrak{a}$. Here $d\exp(X)$ denote the tangent map of the left translation by $\exp(X)$. Here we have used the fact that the Hodge subbundle $\mathcal{H}_{A}$ is a homogeneous subbundle of the tangent bundle $\text{T}^{1,0}D$ which is also homogeneous. See \cite {Griffiths63} for the basic properties of homogeneous vector bundles.

Hence we have the isomorphism of tangent spaces $\text{T}_{s}^{1,0}(A\cap D)\simeq \mathcal{H}_{A}|_{s}$ for any $s\in A\cap D$, which implies that $$\text{T}^{1,0}(A\cap D)\simeq
 \mathcal{H}_{A}$$ as holomorphic bundles on $A\cap D$.
 Note that the tangent map of the projection map $P:\, N_+\cap D \to A\cap D$ maps the
tangent bundle of $D$,
$$\text{T}^{1,0}D\simeq \bigoplus_{k=1}^{n}\bigoplus_{l=1}^{k}\text{Hom}({F}^k/{F}^{k+1}, {F}^{k-l}/{F}^{k-l+1})$$
onto its subbundle $\text{T}^{1,0}(A\cap D)\simeq \mathcal{H}_{A}$. Therefore the tangent map $$d\Psi=dP\circ d\P:\, \text{T}^{1,0}\T \to \text{T}^{1,0}(A\cap D)$$
at any point $q\in \T$ is explicitly given by
$$d\Psi_{q}:\, \text{T}_{q}^{1,0}\T \to \oplus_{k=1}^{n}\text{Hom}({F}_{q}^k/{F}_{q}^{k+1}, {F}_{q}^{k-1}/{F}_{q}^{k}) \to \mathcal{H}_{A}|_{\Psi(q)}\simeq \text{T}_{\Psi(q)}^{1,0}(A\cap D)$$
which is an isomorphism by the local Torelli theorem for Calabi--Yau manifolds as in (\ref{local assumption''}). Therefore we have proved that the holomorphic map $$\Psi:\, \T\to A\cap D$$ is nondegenerate which induces an
affine structure on $\T$ from $A$.
\end{proof}


Here recall that by pulling back the affine structure on $A\cap D$, we mean to pull back the affine coordinates of $A\simeq \C^N$ by the nondegenerate holomorphic map $\Psi$, and this defines an affine structure on $\T$. We remark that this affine structure on ${\mathcal{T}}$ depends on the choice of the base point $p$. Affine structures on ${\mathcal{T}}$ defined in this ways by fixing different base point may not be compatible with each other.

\subsection{Affine structure on the Hodge metric completion space}\label{affine on TH}

First recall the diagram \eqref{cover maps}:
\begin{align} \xymatrix{\mathcal{T}\ar[r]^{i_{m}}\ar[d]^{\pi_m}&\mathcal{T}^H_{m}\ar[d]^{\pi_{m}^H}\ar[r]^{{\Phi}^{H}_{m}}&D\ar[d]^{\pi_D}\\
\mathcal{Z}_m\ar[r]^{i}&\mathcal{Z}^H_{m}\ar[r]^{{\Phi}_{{\mathcal{Z}_m}}^H}&D/\Gamma.
}
\end{align}

By Corollary \ref{extended-boundedness},
we define the holomorphic map $$\Psi_m^H:\, \TT\to A\cap D$$ by composing
the extended period map $\Phi_m^H:\,\TT\to N_+\cap D$ with the projection map $P:\, N_+\cap D\to A\cap D$.
We also define the holomorphic map $$\Psi_m:\, \T_m\to A\cap D$$ by restricting $\Psi_m=\Psi_m^H|_{\T_m}$, which is given by  $\Psi_m=P\circ \Phi_m$. 

Next, recall that $\T_{m}\subset \TT$ is an open complex submanifold of $\TT$ with $\text{codim}_{\C}(\TT\setminus \T_{m})\ge 1$,  $i_m$ is a covering map onto $\T_m$, and $$\T_m=i_m(\T)=(\pi_m^H)^{-1}(\sZ).$$ We can choose a small neighborhood $U$ of any point in $\T_{m}$ such that
$$\pi_m^H:\, U \to V=\pi_m^H(U)\subset \sZ,$$
is a biholomorphic map. We can shrink $U$ and $V$ simultaneously such that $\pi_{m}^{-1}(V)=\cup_{\alpha}W_{\alpha}$ and $\pi_{m} :\, W_{\alpha}\to V$ is also
biholomorphic. Choose any $W_{\alpha}$ and denote it by $W=W_{\alpha}$. Then $i_{m} :\, W \to U$ is a biholomorphic map. Since $$\Psi=\Psi_{m}^{H}\circ
i_{m}=\Psi_{m}\circ i_{m},$$ we have $\Psi|_{W}=\Psi_{m}|_{U}\circ i_{m}|_{W}$. Theorem \ref{affine structure} implies that $\Psi|_{W}$ is biholomorphic onto its
image, if we shrink $W$, $V$ and $U$ again. Therefore $$\Psi_{m}|_{U} :\, U\to A\cap D$$ is biholomorphic onto its image. By pulling back the affine coordinate
chart in $A \simeq \C^N$, we get an induced affine structure on $\T_m$ such that $\Psi_m$ is an affine map.

In conclusion, we have proved the following lemma.

\begin{lemma}\label{affine on Tm}
The holomorphic map $$\Psi_m:\, \T_m\to A\cap D$$ is a local embedding. In particular, $\Psi_m$ defines a global holomorphic affine structure on $\T_m$.
\end{lemma}

Next we will prove that the affine structure induced by $\Psi_m:\, \T_m\to A\cap D$ can be extended to a global affine structure on $\TT$, which is precisely induced by the extended period map $\Psi_m^H:\, \TT\to A\cap D$.

\begin{definition}
Let $M$ be a complex manifold and $N\subset M$ a cloesd subset. Let $E_0\to M\setminus N$ be a holomorphic vector bundle.
Then $E_0$ is called holomorphically trivial along $N$, if for any point $x\in N$, there exists an open neighborhood $U$ of $x$ in $M$ such that $E_0|_{U\setminus N}$ is holomorphically trivial.
\end{definition}
We need to following elementary lemma, which is Proposition 4.4 from \cite{Mckay}, to proceed.
\begin{lemma}\label{unique-extension}
The holomorphic vector bundle $E_0\to M\setminus N$ can be extended to a unique holomorphic vector bundle $E\to M$ such that $E|_{M\setminus N}=E_0$, if and only if $E_0$ is holomorphically trivial along $N$.
\end{lemma}
\begin{proof}
The proof is taken from Proposition 4.4 of \cite{Mckay}. We include it here for convenience of the reader.

If such an extension $E\to M$ exists uniquely, then we can take a neighborhood $U$ of $x$ such that $E|_{U}$ is trivial, and hence $E_0|_{U\setminus N}$ is trivial.

Conversely, suppose that for any point $x\in N$, there exists an open neighborhood $U_x$ of $x$ in $M$ such that $E_0|_{U_x\setminus N}$ is trivial. We cover $N$ by the open sets $\{U_x:\, x\in N\}$. For any $y\in M\setminus N$, we can choose an open neighborhood $V_y$ of $y$ such that $V_y\cap N=\emptyset$.

We cover $M$ by the open sets in
$$\mathcal{C}=\{U_x:\, x\in N\}\cup \{V_y:\, y\in M\setminus N\},$$ and 
denote $\psi_{V}^{U}$ to be the transition function of $E_0$ on $U\cap V$ if $U\cap V\neq \emptyset$, where $U,V$ are open subsets of $M\setminus N$ and $E_0$ is trivial on $U$ and $V$.

If $V_y\cap V_{y'}\neq \emptyset$, then $\psi_{V_{y'}}^{V_y}$ is well-defined, since $V_y\cap V_{y'}\subset M\setminus N$. We can define the transition function $\phi_{V_{y'}}^{V_y}=\psi_{V_{y'}}^{V_y}$.
For the case of $U_x$ and $V_y$, we use the same argument to get the transition functions $\phi_{V_{y}}^{U_x}=\psi_{V_{y}}^{U_x}$ and
$\phi^{V_{y}}_{U_x}=\psi^{V_{y}}_{U_x}$.

The remaining case is when $U_x\cap U_{x'}\neq \emptyset$, where $x ,x' \in N$. Then the transition function $\psi_{U_{x'}\setminus N}^{U_x\setminus N}$ is well-defined and we define the transition function $\phi_{U_{x'}}^{U_x}=\psi_{U_{x'}\setminus N}^{U_x\setminus N}$. Clearly the newly defined transition functions $\phi_{V}^U$ with $U,V\in \mathcal{C}$ satisfy that
$$\phi_{V}^U \circ \phi_{U}^V=\text{id},\text{ and }\phi_{V}^U \circ \phi_{W}^V \circ \phi_{U}^W=\text{id}.$$
Therefore these transition functions determine a unique holomorphic vector bundle $E\to M$ such that $E|_{M\setminus N}=E_0$.
\end{proof}

On the period domain ${D}$ we have the Hodge bundle which is the horizontal subbundle of the tangent bundle of $D$,
\begin{equation}\label{Hodge-bundle}
\bigoplus_{k=1}^n \text{Hom}(F^k/F^{k+1},F^{k-1}/F^k),
\end{equation}
in which the differentials of the period maps $\Phi$, $\Phi_m^H$ and $\Phi_m$ take values.
From local Torelli for Calabi--Yau manifolds, we know that the image of the differential of the period map at each point  is  the Hodge subbundle $\text{Hom}(F^n,F^{n-1}/F^n)$.  On the other hand, as the variation of Hodge structure from geometry, the Hodge bundles naturally exist on $\T$ and $\T_m$.


 Let us denote the restriction to $A\cap D$ of the Hodge subbundle $\mathcal{H}=\text{Hom}(F^n,F^{n-1}/F^n)$ on $D$ by
$$\mathcal{H}_A=\text{Hom}(F^n,F^{n-1}/F^n)|_{A}.$$
By the definition of $A$, the holomorphic tangent bundle of $A\cap D$ is naturally isomorphic to $\mathcal{H}_A$ as proved in Theorem \ref{affine structure}.
Theorem \ref{affine structure} and Lemma \ref{affine on Tm} give the natural isomorphisms of the holomorphic vector bundles over $\T$ and $\T_m$ respectively
\begin{eqnarray*}
d\Psi:\,\text{T}^{1,0}\T \simeq \Psi^*\mathcal{H}_A,\\
d\Psi_{m}:\,\text{T}^{1,0}\T_m\simeq \Psi_m^*\mathcal{H}_A.
\end{eqnarray*}

We remark that the period map $\Phi_{\Z}^{H}:\, \ZZ\to D/\Gamma$ can be lifted to the universal cover to get  $\Phi_{m}^{H}: \, \TT\to D$ due to the fact that  around any point in $\ZZ\setminus \Z$, the Picard-Lefschetz transformation, or equivalently, the monodromy is trivial, which is proved in Lemma \ref{cidim}. Hence the Hodge bundle $(\Psi_m^H)^*\mathcal{H}_A$ is the natural extension of $\Psi_m^*\mathcal{H}_A$ over $\TT$. More precisely we have the following lemma.

\begin{lemma}\label{Hodge extension}
The isomorphism $\text{T}^{1,0}\T_m\simeq \Psi_m^*\mathcal{H}_A$ of holomorphic vector bundles over $\T_m$ has a unique extension to an isomorphism of holomorphic vector bundles over $\T^H_m$ with
$$\text{T}^{1,0}\TT\simeq (\Psi_m^H)^*\mathcal{H}_A.$$
\end{lemma}
\begin{proof} We give another proof by using the extension of the period map.

For any $p\in \TT\setminus \T_{m}$, we can choose an open neighborhood $U_{p}\subset \TT$ such that $\Psi_{m}^{H}(U_{p})$ is contained in an open neighborhood $W\subset A\cap D$ along which $\mathcal{H}_A$ is trivial. Let
$$\bar{o}=\Psi_{m}^{H}(p)\in W \text{ and } H_{A}=\mathcal{H}_A|_{\bar{o}}.$$ Then we have $$\mathcal{H}_A|_{W}\simeq W\times H_{A}$$ and
$$(\Psi_m^H)^*\mathcal{H}_A|_{U_{p}}\simeq(\Psi_m^H)^*(\mathcal{H}_A|_{W})|_{U_{p}}\simeq(\Psi_m^H)^*(W\times H_{A})|_{U_{p}}\simeq U_{p}\times H_{A}.$$
Let $U=U_{p}\cap \T_{m}$.
Then the restriction to $U$ of the pull-back Hodge bundle is $$\Psi_m^*\mathcal{H}_A|_{U}=((\Psi_m^H)^*\mathcal{H}_A|_{U_{p}})|_{U}\simeq U\times H_{A},$$ which means that $\Psi_m^*\mathcal{H}_A$ is holomorphically trivial along $\TT \setminus \T_{m}$.

Therefore by Lemma \ref{unique-extension}, the Hodge bundle $\Psi_m^*\mathcal{H}_A$ over $\T_m$ has a unique extension over $\TT$, which is
$(\Psi_m^H)^*\mathcal{H}_A$ by continuity. Then we apply Lemma \ref{unique-extension} again to conclude that the holomorphic tangent bundle $\text{T}^{1,0}\T_m$,
which is isomorphic to $\Psi_m^*\mathcal{H}_A$, is holomorphically trivial along $\TT\setminus \T_m$. Hence $\text{T}^{1,0}\T_m$ has a unique extension which is
obviously $\text{T}^{1,0}\TT$. By the uniqueness of such extension we conclude that
$$\text{T}^{1,0}\TT\simeq (\Psi_m^H)^*\mathcal{H}_A.$$
\end{proof}

With the same notation $\mathcal{H}=\text{Hom}(F^n,F^{n-1}/F^n)$ to denote the corresponding Hodge bundles on $\T_m $ and $\TT$,
the above lemma simply tells us that the isomorphism of bundles $\text{T}^{1,0}\T_{m}\simeq \mathcal{H}$ on $\T_{m}$ extends to isomorphism on $\TT$, $\text{T}^{1,0}\TT\simeq \mathcal{H}$ due to the trivial monodromy around   around $\ZZ\setminus \Z$.

With the above preparations, we are ready to prove the following theorem, which is crucial to the proofs of our main theorems.

\begin{thm}\label{THmaffine}
The holomorphic map $$\Psi_m^H:\, \TT\to A\cap D$$ is nondegenerate. Hence $\Psi_m^H$ defines a global affine structure on $\TT$.
\end{thm}
\begin{proof}
Without confusion we will use $\mathcal{H}_A$ to denote the tangent bundle of $A\cap D$, since they are naturally isomorphic.

For any point $q\in \TT\setminus \T_m$, we can choose a neighborhood $U_q$ of $q$ in $\TT$ such that $$\text{T}^{1,0}\TT\simeq (\Psi_m^H)^*\mathcal{H}_A$$ is trivial on $U_q$. Moreover, we can shrink $U_q$ so that $\Psi_m^H(U_q)\subseteq V$, where $V\subset A\cap D$ on which $\mathcal{H}_A$ is trivial.
We choose a basis $$\{\Lambda_1, \cdots ,\Lambda_N \}$$ of $\mathcal{H}_A|_{V}$, which is parallel with respect to the natural affine structure on $A\cap D$.
Let $$\mu_i=((\Psi_m^H)^*\Lambda_i)|_{U_{q}},\, 1\le i\le N.$$ Then $\{\mu_1,\cdots,\mu_n\}$ is a basis of $$\text{T}^{1,0}\TT|_{U_{q}}\simeq (\Psi_m^H)^*\mathcal{H}_A|_{U_{q}}.$$

Let $U=U_q\cap \T_m$ and $q_k\in U$ such that $q_k\longrightarrow q$ as $k\longrightarrow \infty$.
Since $$\Psi_m:\, \T_m\to A\cap D$$ defines a global affine structure on $\T_m$, for any $k\ge1$ we have
$$(d\Psi_m)(\mu_i|_{q_k})=\sum_{j} A_{ij}(q_k)\Lambda_j|_{o_k},$$
for some nonsingular matrix $(A_{ij}(q_k))_{1\le i,j\le N}$, where $o_k=\Psi_m(q_k)$.

Since the affine structure on $\T_m$ is induced from that of $A\cap D$, both the bases
$$\{\mu_1|_{U},\cdots,\mu_N|_{U} \}\text{ and }\{\Lambda_1, \cdots ,\Lambda_N \}$$
are parallel with respect to the affine structures on $\T_m$ and $A\cap D$ respectively.
Hence we have that the matrix  $(A_{ij}(q_k))=(A_{ij})$ is constant matrix.
Therefore
\begin{eqnarray*}
(d\Psi^H_m)(\mu_i|_{q})&=&\lim_{k\to \infty}(d\Psi^H_m)(\mu_i|_{q_k})\\
&=&\lim_{k\to \infty}(d\Psi_m)(\mu_i|_{q_k})\\
&=&\lim_{k\to \infty}\sum_{j} A_{ij}\Lambda_j|_{o_k}\\
&=&\sum_{j} A_{ij}\Lambda_j|_{o_{_\infty}},
\end{eqnarray*}
where $o_\infty =\Psi^H_m(q)$.
Since the matrix $(A_{ij})_{1\le i,j\le N}$ is nonsingular, the tangent map $d\Psi^H_m$ is nondegenerate.
\end{proof}

Note that in the proof of Theorem \ref{THmaffine}, we have used substantially the identification
$$d\Psi_{m}:\, \text{T}^{1,0}\T_m\simeq \Psi_m^*\mathcal{H}_A$$
on $\T_m$,
which is explicitly given by the contraction map $\kappa(v)\lrcorner$ at a point $(q,v) \in \text{T}_q \T_m$. Here recall that $$\kappa:\,T^{1,0}_q\mathcal{T}_{m}\to {H}^{0,1}(M_q,T^{1,0}M_q)$$ is the Kodaira-Spencer map for any $q\in \T_m$. From the explicit expression, one can see that this identification of bundles $$\text{T}^{1,0}\T_m\simeq \Psi_m^*\mathcal{H}_A\simeq \mathcal{H}$$ on $\T_m$ only depends on the tangent vector $v$.

Moreover, since $\Psi_m^*\mathcal{H}_A$ extends trivially to $(\Psi_m^H)^*\mathcal{H}_A$ on $\TT$ due to the trivial monodromy along $\ZZ\setminus \Z$,
it gives the identification of the tangent bundle $\text{T}^{1,0}\TT$ with the Hodge subbundle $$(\Psi_m^H)^*\mathcal{H}_A\simeq \mathcal{H}$$ on $\TT$ by Lemma \ref{Hodge extension}.
From this special feature of the period map, one can see that the identification of the tangent bundle of the Torelli space with the Hodge subbundle is compatible with both the affine structures on $\T_m$ and $A$, as well as the affine map $\Psi_m$.

Now we recall a lemma due to Griffiths and Wolf, which is proved as Corollary 2 in \cite{GW}.
\begin{lemma}\label{covering-lemma}
Let $f:\, X\to Y$ be a local diffeomorphism of connected Riemannian manifolds. Assume that $X$ is complete for the induced metric. Then $f(X)=Y$, $f$ is a covering map and $Y$ is complete.
\end{lemma}

This lemma, together with Theorem \ref{THmaffine} proved above, gives the following corollary.

\begin{corollary}\label{cover-TmH}
The holomorphic map $$\Psi_m^H:\, \TT\to A\cap D$$ is a universal covering map, and the image $\Psi_{m}^{H}(\TT)=A\cap D$ is complete with respect to the Hodge metric.
\end{corollary}




It is important to note that the flat connections which correspond to the global holomorphic affine structures on $\mathcal{T}$, on $\mathcal{T}_m$ or on $\mathcal{T}_{m}^H$ are in general not compatible with the corresponding Hodge metrics on them.

\subsection{Injectivity of the period map on the Hodge metric completion space}\label{injective}
The main purpose of this section is to prove Theorem \ref{injectivityofPhiH} stated below. We will give two proofs of this theorem.
The first proof is to show directly that $A\cap D$ is simply connected, which together with Corollary \ref{cover-TmH} implies the theorem. 

The second proof uses the affine structures on $\T^H_m$ and $A\cap D$ in a more substantial way. Each proof reflects different geometric structures of the period domain and the period map which will be useful for further study, so we include both proofs in this section.


\begin{theorem}\label{injectivityofPhiH}For any $m\geq 3$, the holomorphic map $$\Psi^H_{m}:\, \mathcal{T}^H_{m}\to A\cap D$$ is an injection and hence a biholomorphic map.
\end{theorem}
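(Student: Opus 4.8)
The plan is to upgrade the local embedding $\tau^H_{_m}$ to a global biholomorphism onto a bounded domain in $\mathbb{C}^N$ by developing $\mathcal{T}^H_{_m}$ along the affine rays issuing from a base point. Since $\tau^H_{_m}$ is a local embedding by Lemma \ref{injofPsi} and $\dim_{\mathbb{C}}\mathcal{T}^H_{_m}=N$, it is in fact a local biholomorphism; its image $U:=\tau^H_{_m}(\mathcal{T}^H_{_m})\subseteq\mathbb{C}^N$ is open, and it is bounded by Proposition \ref{Riemannextension} together with $\tau^H_{_m}=P\circ\Phi^H_{_m}$. By Theorem \ref{THmaffine} there is a flat torsion-free connection $\nabla$ on $\mathcal{T}^H_{_m}$ for which $\tau^H_{_m}$ is a global affine coordinate; equivalently, a curve is a $\nabla$-geodesic exactly when $\tau^H_{_m}$ carries it affinely onto a line segment in $\mathbb{C}^N$.

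Fix $p\in\mathcal{T}^H_{_m}$ and set $y_0=\tau^H_{_m}(p)$. For a unit vector $v\in\mathbb{C}^N$, let $b_p(v)\in(0,\infty]$ be the supremum of those $T$ for which the segment $t\mapsto y_0+tv$, $t\in[0,T]$, lifts from $p$ to a $\nabla$-geodesic in $\mathcal{T}^H_{_m}$. Because $\tau^H_{_m}$ is a local homeomorphism, such a lift is unique and depends continuously on the data, so $b_p$ is lower semicontinuous and
\[
\Omega:=\{\,tv\ :\ |v|=1,\ 0\le t<b_p(v)\,\}\subseteq\mathbb{C}^N
\]
is an open, star-shaped, hence connected, neighbourhood of $0$. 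Lifting rays defines a continuous map $\exp_p^{\nabla}\colon\Omega\to\mathcal{T}^H_{_m}$ with $\tau^H_{_m}\bigl(\exp_p^{\nabla}(w)\bigr)=y_0+w$; locally $\exp_p^{\nabla}=(\tau^H_{_m})^{-1}(y_0+\,\cdot\,)$, so it is holomorphic and open, and it is injective since its composition with $\tau^H_{_m}$ is the translation $w\mapsto y_0+w$.

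The core step is to show that $\exp_p^{\nabla}(\Omega)$ is closed in $\mathcal{T}^H_{_m}$, and here the completeness of the Hodge metric is used. It suffices to prove: if $\beta\colon[0,T)\to\mathcal{T}^H_{_m}$ is a maximal $\nabla$-geodesic with $T<\infty$ whose image ray $\{y_0+tv:0\le t\le T\}$ lies entirely in $U$, then $\beta(t)$ converges in $\mathcal{T}^H_{_m}$ as $t\to T$; for then, using the local biholomorphism $\tau^H_{_m}$ near the limit point, $\beta$ would extend past $T$, contradicting maximality, so that no such finite-$T$ maximal ray exists, which is precisely the closedness of $\exp_p^{\nabla}(\Omega)$ (if $\exp_p^{\nabla}(w_k)\to x_\infty$ then $w_k\to w_\infty:=\tau^H_{_m}(x_\infty)-y_0$, and $w_\infty\in\partial\Omega$ would produce exactly such a geodesic). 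To prove the convergence claim I would invoke the Hopf--Rinow theorem: since $\mathcal{T}^H_{_m}$ is Hodge-complete, it is enough to see that $\beta$ has finite Hodge length over the compact segment $\{y_0+tv:0\le t\le T\}\subset U$; once a subsequence $\beta(t_k)$ converges, unique path-lifting for the local homeomorphism $\tau^H_{_m}$ promotes this to $\beta(t)\to$ the same limit. This comparison of the Hodge metric on $\mathcal{T}^H_{_m}$ with the pulled-back Euclidean metric $(\tau^H_{_m})^{*}(\mathrm{Eucl})$ over a compact piece of $U$ --- equivalently, ruling out that $\beta$ runs off to Hodge-infinity while its image stays in a compact subset of $U$ --- is what I expect to be the main difficulty; it should be obtained from the Griffiths transversality of $\Phi^H_{_m}$ (Lemma \ref{GTonTH}), the factorization $\tau^H_{_m}=P\circ\Phi^H_{_m}$, and the Griffiths--Schmid estimates for the Hodge metric on $N_+\cap D$.

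Granting this, $\exp_p^{\nabla}(\Omega)$ is non-empty, open and closed in the connected manifold $\mathcal{T}^H_{_m}$, hence equals $\mathcal{T}^H_{_m}$; so $\exp_p^{\nabla}\colon\Omega\to\mathcal{T}^H_{_m}$ is a holomorphic bijection, therefore a biholomorphism. Consequently $\tau^H_{_m}=(w\mapsto y_0+w)\circ(\exp_p^{\nabla})^{-1}$ is injective, which is the assertion of the theorem; moreover $\tau^H_{_m}$ identifies $\mathcal{T}^H_{_m}$ with the bounded domain $\Omega\subseteq\mathbb{C}^N$, and the injectivity of $\Phi^H_{_m}$ claimed in Proposition \ref{defnofTH} follows immediately from $\tau^H_{_m}=P\circ\Phi^H_{_m}$.
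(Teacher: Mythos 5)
Your strategy --- develop $\mathcal{T}^H_{_m}$ along affine rays from a base point and show the developing map $\exp_p^{\nabla}$ is a bijection onto a star-shaped $\Omega$ --- is genuinely different from the paper's, and if completed would prove slightly more (star-shapedness of $\tau^H_{_m}(\mathcal{T}^H_{_m})$). But there is a real gap at exactly the step you flag as ``the main difficulty,'' and it is not something the Griffiths--Schmid estimates will supply. You need that an affine geodesic $\beta$ whose $\tau^H_{_m}$-image is a compact segment of $U$ has finite \emph{Hodge} length. The Hodge metric on $\mathcal{T}^H_{_m}$ is the pull-back via $\Phi^H_{_m}$ of the complete Hodge metric of $D$, and $\Phi^H_{_m}\circ\beta$ is a curve in $N_+\cap D$ that may approach $\partial D$ inside $N_+$ while its projection $P\circ\Phi^H_{_m}\circ\beta$ stays in a compact set; near $\partial D$ the Hodge metric blows up relative to the Euclidean metric of $N_+$ (already for the upper half-plane sitting in $\mathbb{C}\cong N_+$, a Euclidean segment reaching the real axis has finite Euclidean but infinite Poincar\'e length). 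The known distance-decreasing estimates for horizontal maps go in the opposite direction, so finiteness of the Hodge length is essentially as hard as the theorem itself. There is also a secondary hole in the reduction: when $\exp_p^{\nabla}(w_k)\to x_\infty$ with $w_\infty\in\partial\Omega$ and $b_p(v_\infty)<|w_\infty|$, the endpoint $y_0+b_p(v_\infty)v_\infty$ is only a limit of points of $U$, not necessarily in $U$, so your convergence claim (stated for closed segments contained in $U$) does not apply; excluding this amounts to already knowing $U$ is star-shaped about $y_0$.

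The paper's proof is organized precisely to avoid any length estimate. It first proves (Lemma \ref{straightline}) that any two points of $\mathcal{T}^H_{_m}$ are joined by a straight line, via an open--closed argument on the set $S$ of endpoints reachable from $p$: for closedness one starts from a sequence $q_i\in S$ that is \emph{already} Cauchy for the Hodge metric, extracts convergent initial velocities $v_i$ and Euclidean parameters $T_i$ (using the boundedness of $\tau^H_{_m}$), and extends the limiting geodesic to time $T_\infty$ by applying completeness to the auxiliary sequence $l_k(a_k)$, which is Cauchy by continuous dependence of solutions of the geodesic equation on initial data --- the candidate limit point is furnished by the given sequence of endpoints rather than by integrating the metric along a single curve. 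Injectivity is then immediate: an affine map taking equal values at the two ends of a straight line is constant on it, contradicting the local injectivity of $\tau^H_{_m}$ from Lemma \ref{injofPsi}. I would either adopt that structure or supply an actual proof of your length estimate; as written, the proposal does not close.
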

\begin{proof} As explicitly described in the proof of Lemma \ref{lemma of locallybounded}, the natural projection $\pi:\,D\to G_\mathbb{R}/K$, when restricted
to the underlying real manifold of $\mathrm{exp}(\mathfrak{p}_+)
\cap D$, is given by the diffeomorphism
\begin{equation}\label{pi+}
\pi_{+}:\, \mathrm{exp}(\mathfrak{p}_+) \cap D\longrightarrow \mathrm{exp}(\mathfrak{p}_0) \stackrel{\simeq}{\longrightarrow} G_\mathbb{R}/K,
\end{equation}
which is defined by mapping $\exp (Y) \bar{o}$ to $\exp (X) \bar{o}$ for $Y\in \mathfrak{p}_+$ and $X\in \mathfrak{p}_0$ with the relation that
$$X =T_0( Y +\tau_0(Y))$$ for some real number $T_{0}$.
Here $\bar{o}$ is the base point in $\mathrm{exp}(\mathfrak{p}_+) \cap D$.

By Griffiths transversality, one has
$$\mathfrak{a}\subset\mathfrak{g}^{-1,1}\subset \mathfrak{p}_{+}\text{ and }\mathfrak{a}_0=\mathfrak{a} +\tau_0(\mathfrak{a})\subset \mathfrak{p}_0.$$
Then $A\cap D$ is a submanifold of $\text{exp}(\mathfrak{p}_+) \cap D$, and the diffeomorphism $\pi_{+}$ maps $A\cap D\subseteq \text{exp}(\mathfrak{p}_{+}) \cap D$ diffeomorphically to its image $\exp(\mathfrak{a}_0)$ inside $G_{\mathbb R}/K$, from which one has the diffeomorphism $$A\cap D\simeq \text{exp} (\mathfrak{a}_0)$$ induced by $\pi_+$. Since $\text{exp} (\mathfrak{a}_0)$ is simply connected, one concludes that $A \cap D$ is also simply connected.



Now since $\T^H_m$ is simply connected and $\Psi_m^H:\, \TT\to A\cap D$ is a covering map, we conclude that $\Psi^H_m$ must be a biholomorphic map.
\end{proof}

For the second proof, we will first prove the following elementary lemma, in which we mainly use the completeness with the Hodge metric on $\T^H_m$, the boundedness of the image of the extended period map $$\PP:\, \TT \to N_{+}\cap D$$ which is also an isometry in the Hodge metric,  the holomorphic affine structure on $\mathcal{T}^H_{m}$, and the affineness of $\Psi^H_{m}$. We remark that as $\mathcal{T}^H_{m}$ is a complex affine manifold, we have the notion of straight lines in it with respect to the affine structure.

\begin{lemma}\label{straightline} For any point in $\mathcal{T}^H_{m},$ there is a straight line segment in $\mathcal{T}^H_{m}$ connecting it to the base point $p$.
\end{lemma}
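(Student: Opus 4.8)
The plan is to argue by contradiction, exploiting the completeness of $\mathcal{T}^H_{_m}$ with respect to the Hodge metric together with the fact that, via the affine structure from Theorem \ref{THmaffine}, straight lines are (reparametrized) geodesics for the trivial flat connection. Fix $p_0, q_0 \in \mathcal{T}^H_{_m}$ and consider the straight segment $\ell(t) = (1-t)\tau^H_{_m}(p_0) + t\,\tau^H_{_m}(q_0)$ in $\mathbb{C}^N$. Since $\tau^H_{_m}$ is a local embedding (Lemma \ref{injofPsi}) which is affine, we may try to lift $\ell$ through $\tau^H_{_m}$ starting at $p_0$; let $[0,T)$ with $T \le 1$ be the maximal interval on which such a lift $\tilde\ell$ exists. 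The key point is that $\tilde\ell$ is, up to an affine reparametrization, a geodesic of the flat affine connection, hence a geodesic of \emph{no} particular Riemannian metric a priori — so I cannot directly invoke metric completeness for the affine connection. Instead the idea is to show the Hodge length of $\tilde\ell$ on $[0,T)$ is finite, which forces $\tilde\ell$ to extend continuously to $t=T$ by completeness of the Hodge metric, and then the local-embedding property of $\tau^H_{_m}$ lets one continue the lift past $T$, contradicting maximality unless $T=1$.

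To control the Hodge length, I would use that along $\tilde\ell$ the tangent vector $\tilde\ell'(t)$ has constant image $\tau^H_{_m}(q_0) - \tau^H_{_m}(p_0)$ under the (fiberwise constant, by affineness) identification $(\tau^H_{_m})_* $ of $T^{1,0}\mathcal{T}^H_{_m}$ with a parallel frame of $T^{1,0}\mathbb{C}^N$, exactly as in the proof of Lemma \ref{injofPsi}: writing $(\tau^H_{_m})_*(\mu^H_1,\dots,\mu^H_N) = (\Lambda_1,\dots,\Lambda_N)A$ with $A$ a constant nonsingular matrix. Thus in the parallel frame $(\mu^H_1,\dots,\mu^H_N)$ the velocity $\tilde\ell'(t)$ has constant coefficients. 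The Hodge length is then $\int_0^T \|\tilde\ell'(t)\|_{h}\,dt$, and I need a bound on $\|\mu^H_j\|_h$ along the segment. Here one uses the horizontality of $\Phi^H_{_m}$ (Lemma \ref{GTonTH}) and the explicit comparison, in the Harish-Chandra-type coordinates on $N_+ \cap D$ developed in Lemma \ref{abounded} and Theorem \ref{locallybounded}, between the Euclidean metric on $N_+$ (equivalently on $\mathbb{C}^N$ via $P$) and the Hodge metric: since $\Phi^H_{_m}(\mathcal{T}^H_{_m}) \subseteq N_+\cap D$ is bounded in the Euclidean metric (Proposition \ref{Riemannextension}), and the Hodge metric on $D$ is a complete homogeneous metric whose restriction to the bounded region $N_+\cap D$ is comparable from above and below to the Euclidean metric on compact subsets, the finite-Euclidean-length segment $\ell$ pulls back to a finite-Hodge-length curve on the region of $\mathcal{T}^H_{_m}$ already traversed — one must be slightly careful near the boundary, which is where completeness is genuinely used: if $T<1$ and the Hodge length were infinite, the curve would be a divergent path of finite ``affine parameter'' but infinite Hodge length, and I would instead argue directly that the image stays in a fixed compact subset of $D/\Gamma$ by the properness of the extended period map and hence the lift cannot escape to infinity.

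Concretely the steps are: (1) set up the maximal lift $\tilde\ell: [0,T) \to \mathcal{T}^H_{_m}$ of the affine segment, using that $\tau^H_{_m}$ is a local biholomorphism onto its image by Lemma \ref{injofPsi}; (2) show $\tilde\ell$ has velocity with constant coefficients in a parallel frame, so its Hodge arclength growth is controlled by the (bounded) geometry of $N_+\cap D$ under $\Phi^H_{_m}$, using horizontality (Lemma \ref{GTonTH}) and the Euclidean boundedness (Proposition \ref{Riemannextension}); (3) conclude the Hodge length on $[0,T)$ is finite, hence $\tilde\ell(t)$ converges as $t \to T$ to some $r \in \mathcal{T}^H_{_m}$ by completeness; (4) apply the local-embedding property at $r$ to extend the lift beyond $T$, contradicting maximality, so $T=1$ and $\tilde\ell(1)$ is a straight line from $p_0$ to $q_0$. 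The main obstacle, and the place where the argument needs the most care, is step (2)–(3): establishing that a finite Euclidean length downstairs in $N_+$ forces finite Hodge length of the lift upstairs, i.e. a two-sided comparison of the Hodge metric with the flat Euclidean metric along the relevant curves — this is exactly the kind of estimate implicit in the Harish-Chandra embedding picture of Lemma \ref{abounded}, and making it uniform up to the boundary of the bounded region $\Phi^H_{_m}(\mathcal{T}^H_{_m})$ is the crux.
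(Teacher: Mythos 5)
Your approach is genuinely different from the paper's, but it has a fatal gap at exactly the step you flag as the crux. You need the estimate $\|\tilde\ell'(t)\|_{h}\leq C\,\|(\tau^H_{_m})_*\tilde\ell'(t)\|_{E}$ uniformly along the lift, i.e.\ that the Hodge metric is bounded above by a constant times the Euclidean metric pulled back through $\tau^H_{_m}$. This is false, and provably so in this setting: the model computation in Lemma \ref{abounded} exhibits the horizontal orbits $D(\varphi_i)$ as unit discs $\mathfrak{D}\subseteq S(\varphi_i)\cap N_+\cong\mathbb{C}$ on which the restricted Hodge metric is the Poincar\'e metric, which blows up like $(1-|z|^2)^{-2}$ against the Euclidean metric at the boundary. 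More structurally, $\mathcal{T}^H_{_m}$ is complete and noncompact in the Hodge metric while $\tau^H_{_m}(\mathcal{T}^H_{_m})$ is Euclidean-bounded; if the inequality above held, Hodge-Cauchy escape to infinity would be impossible inside a bounded set, forcing the image to be compact, a contradiction. So "comparable on compact subsets" is true but useless: the image of $\Phi^H_{_m}$ is not relatively compact in $D$, and a Euclidean segment of finite length can perfectly well lift to a curve of infinite Hodge length (a Euclidean radius of the Poincar\'e disc is the standard example). Consequently step (3) does not follow, and without finite Hodge length you cannot conclude that $\tilde\ell(t)$ converges (rather than oscillates or escapes) as $t\to T^-$. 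The fallback via "properness of the extended period map" is not an argument as written: properness of $\Phi^H_{_{\mathcal{Z}_m}}$ gives no reason for the image of the partial lift to stay in a fixed compact subset of $D/\Gamma$.

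For comparison, the paper avoids any metric comparison between the Hodge and Euclidean structures. It fixes $p$ and runs an open-closed argument on the set $S$ of points joined to $p$ by a straight line: straight lines are realized as solutions $l''(t)=0$ of the affine geodesic equation, closedness of $S$ is obtained by taking a Hodge-Cauchy sequence $q_k\in S$, passing to convergent initial data $(v_k,T_k)\to(v_\infty,T_\infty)$ (using Euclidean boundedness of $\tau^H_{_m}$ only to bound the parameters $T_k$), and extending the limit solution across its maximal existence time by applying Hodge completeness to the Cauchy sequence of \emph{points} $l_k(a_k)$ furnished by continuous dependence on initial data; openness follows from convexity of a small tube around a given line. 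The essential difference is that the paper's completeness argument is applied to a convergent sequence of endpoints of globally existing lines, never to a single curve whose Hodge length would have to be controlled by Euclidean data. If you want to rescue a lifting-type argument, you would need to replace the Euclidean-to-Hodge length bound by some such compactness or ODE-limit mechanism.
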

\begin{proof}The proof uses crucially the facts that the map $\Psi_m^H:\, \T_m^H\to A\cap D$ is local isometry with the Hodge metrics on $\T_m^H$ and $A\cap D$, and that it is also
an affine map with the induced affine structure on $\T_m^H$.

Let $q$ be any point in $\T_m^H$. As $\T_m^H$ is connected and complete with the Hodge metric, by Hopf-Rinow theorem, there exists a geodesic $\gamma$ in $\T_m^H$ connecting the base point $p$ to $q$. Since
$\Psi^{H}$ is a local isometry with the Hodge metric, $\tilde{\gamma}=\Psi^{H}(\gamma)$ is also a geodesic in $A\cap D$.

By Griffiths transversality, we have that $\mathfrak{a}\subset\mathfrak{g}^{-1,1}\subset \mathfrak{p}_{+}$ and $\mathfrak{a}_0=\mathfrak{a} +\tau_0(\mathfrak{a})\subset \mathfrak{p}_0$.
On the other hand,  from equation \eqref{pi+}, we know that any geodesic starting from the base point $\tilde{p}=\Psi^{H}(p)$
in $A\cap D$ is of the form $\exp(tX)\tilde{p}$ with $X\in \mathfrak{a}_{0}$ and $t\in \mathbb{R}$.  Also from the explicit computation in Lemma \ref{abounded} we have the relation
$$\exp(tX)\tilde{p}=\exp(T(t)Y)\tilde{p}$$ with $Y\in \mathfrak{a}$ satisfying $X=T_0(Y+\tau_{0}(Y))$ for some  $T_0\in \mathbb{R}$, and $T(t)$ a smooth real valued monotone function of $t$.

 Note that the affine structure on $A\cap D$ is induced from the affine structure on $\mathfrak{a}$ by the exponential map $$\text{exp}:\, \mathfrak{a}\to A,$$
therefore $\tilde{\gamma}=\exp(T(t)Y)\tilde{p}$ corresponds to a straight line with respect to the affine structure on $A\cap D$. Hence $\gamma$ is also a
straight line in $\T^H$ with respect to the induced affine structure, since $\Psi^H$ is an affine map.
\end{proof}

We remark that the above proof uses substantially the fact that the straight line segment connects the base point to any other point in $\TT$.

\begin{proof}[Second Proof of Theorem \ref{injectivityofPhiH}] We will prove by contradiction.
Let $q_{1}, q_{2}\in \mathcal{T}^H_{m}$ be two different points.
Suppose that $\Psi_{m}^{H}(q_{1})=\Psi_{m}^{H}(q_{2})$.

If one of $q_{1}, q_{2}$, say $q_{1}$, happens to be the base point $p$,
then Lemma \ref{straightline} implies that there is a straight line segment $l\subseteq \mathcal{T}^H_{m}$ connecting $p$ and $q_{2}$.
Since $\Psi_{m}^{H}$ is an affine map and is locally biholomorphic by local Torelli for Calabi--Yau manifolds, the straight line segment $l$ is mapped to a straight line segment $\tilde{l}=\Psi_{m}^{H}(l)$ in $$A\cap D \subset A\simeq \C^{N}.$$ But $\Psi_{m}^{H}(p)=\Psi^{H}(q_{2})$, $\tilde{l}$ is also a cycle in $A\simeq \C^{N}$, which is a contradiction.

Now if both $q_{1}$ and $q_{2}$ are different from the base point $p$, then by Lemma \ref{straightline}, there exist two different straight line segments $l_{1},l_{2}\subseteq \mathcal{T}^H_{m}$ connecting $p$ to $q_{1}$ and $q_{2}$ respectively.
Since $$\Psi_{m}^{H}(q_{1})=\Psi_{m}^{H}(q_{2}),$$ the two straight line segments $\tilde{l_{i}}=\Psi_{m}^{H}(l_{i})$, $i=1,2$ in $A\cap D$ must coincide, since they have the same end points.  This contradicts to the fact that $\Psi_{m}^{H}$ is locally biholomorphic at the base point $p$.
\end{proof}

Since $\Psi^H_m = P\circ \P^H_m$, we also have the following corollary.
\begin{corollary}\label{injective PhiHm}The extended period map $\Phi^H_{m}: \,\mathcal{T}^H_{m}\rightarrow N_+\cap D$ is  an injection.
\end{corollary}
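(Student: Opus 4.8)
The plan is to deduce the injectivity of $\Phi^H_{_m}$ directly from the injectivity of $\tau^H_{_m}$ established in Theorem \ref{injectivityofPhiH}, by exploiting the factorization $\tau^H_{_m}=P\circ\Phi^H_{_m}$. First I would recall the two ingredients already in place: on the one hand, Proposition \ref{Riemannextension} provides the bounded holomorphic map $\Phi^H_{_m}:\,\mathcal{T}^H_{_m}\to N_+\cap D$; on the other hand, $\tau^H_{_m}:\,\mathcal{T}^H_{_m}\to\mathbb{C}^N$ was \emph{defined} as the composition $P\circ\Phi^H_{_m}$, where $P:\,N_+\cap D\to\mathbb{C}^N$ is the projection map from \eqref{projectionmap}, and Theorem \ref{injectivityofPhiH} asserts that this composition is an injection.

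Then I would argue as follows. Let $a,b\in\mathcal{T}^H_{_m}$ satisfy $\Phi^H_{_m}(a)=\Phi^H_{_m}(b)$. Applying the projection $P$ to both sides gives $\tau^H_{_m}(a)=P(\Phi^H_{_m}(a))=P(\Phi^H_{_m}(b))=\tau^H_{_m}(b)$, and since $\tau^H_{_m}$ is injective by Theorem \ref{injectivityofPhiH}, we conclude $a=b$. Hence $\Phi^H_{_m}$ is injective. This is just the general principle that if a composition $g\circ f$ is injective then the inner map $f$ must be injective.

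There is essentially no obstacle here: the corollary is a purely formal consequence of Theorem \ref{injectivityofPhiH}, and all the substantive work — constructing the holomorphic affine structure on $\mathcal{T}^H_{_m}$ (Theorem \ref{THmaffine}), establishing completeness together with straight-line connectivity (Lemma \ref{straightline}), and combining these with the affineness of $\tau^H_{_m}$ and its local injectivity (Lemma \ref{injofPsi}) to obtain the injectivity of $\tau^H_{_m}$ — has already been done. The only point requiring minor care is to invoke the factorization $\tau^H_{_m}=P\circ\Phi^H_{_m}$ correctly, so that injectivity transfers from $\tau^H_{_m}$ to $\Phi^H_{_m}$ and not merely the other way around; no regularity, boundedness, or transversality input beyond what is cited is needed.
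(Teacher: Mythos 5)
Your argument is correct and is exactly the paper's (implicit) proof: the paper derives this corollary directly from the factorization $\tau^H_{_m}=P\circ\Phi^H_{_m}$ together with the injectivity of $\tau^H_{_m}$ from Theorem \ref{injectivityofPhiH}, using the same elementary fact that injectivity of a composition forces injectivity of the inner map. No gaps.
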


To conclude this section we give another description of the affine structure induced by the map $\Psi :\,\TT \to A\cap D$.
With the adapted basis at the base point $p\in T$, we consider $$(d\Phi)_{p}(\text{T}^{1,0}_{p}\T)=\mathfrak{a}\subset \mathfrak{n}_{+}$$ as a block lower triangle matrix whose diagonal elements are zero. Moreover by the local Torelli theorem for Calabi--Yau manifolds, the differential map
$$(d\Phi)_{p}:\, \text{T}^{1,0}_{p}\T \to \text{Hom}(F_{p}^{n},F_{p}^{n-1}/F_{p}^{n})$$
is an isomorphism.
Hence we get  that $\mathfrak{a}$ is isomorphic to its $(1,0)$-block as vector spaces, see \eqref{block} for the definition. 

Let $(\tau_{1},\cdots ,\tau_{N})^{T}$ be the $(1,0)$-block of $\mathfrak{a}$.
Since the affine structure on $A$ is induced by $\exp:\, \mathfrak{a}\to A$
which is an isomorphism, $(\tau_{1},\cdots ,\tau_{N})^{T}$ also defines a global affine structure on A, and hence a global affine structure on $\T_{m}^{H}$, and we denote it by
\begin{equation}\label{tau affine}
\tau_{m}^{H}:\, \TT \to \C^{N},\quad  q\mapsto (\tau_{1}(q),\cdots ,\tau_{N}(q)).
\end{equation}
Note that from linear algebra, it is easy to see that the $(1,0)$-block of $A=\exp(\mathfrak{a})$ is still $(\tau_{1},\cdots ,\tau_{N})^{T}$. Hence the affine map \eqref{tau affine} can be constructed as the $(1,0)$-block of the image of the period map, which are realized as matrices in $N_{+}$.
To be precise, let $$P^{1,0}:\, N_{+}\to \C^{N}$$  be the projection of the matrices in $N_{+}$ onto their $(1,0)$-blocks. Then the affine map \eqref{tau affine} is
$$\tau_{m}^{H}=P^{1,0}\circ \PP : \, \TT \to \C^{N}\simeq H^{n-1,1}(M_{p}).$$
From Theorem \ref{injectivityofPhiH}, we know that $\tau_{m}^{H}$ is injective, and hence it also defines an embedding of $\TT$ into $\C^{N}$.


\section{Global Torelli and Domain of holomorphy}\label{completionspace}
In this section, we first show that $\mathcal{T}^H_{m}$ does not depend on the choice of the level $m$, so that we denote the Hodge metric completion space $\mathcal{T}^H$ by $\mathcal{T}^H=\mathcal{T}^H_{m}$, and the extended period map $\Phi^H$ by $\Phi^H=\Phi^H_{m}$ for any $m\geq 3$.  Therefore $\mathcal{T}^H$ is a complex affine manifold and that $\Phi^H$ is a holomorphic injection. We then prove Theorem \ref{main theorem}, which asserts that $\mathcal{T}^H$ is the completion space of the Torelli space $\mathcal{T}'$ with respect to the Hodge metric and it is a bounded domain of holomorphy in $\mathbb{C}^N$. 

As a direct corollary, we derive the global Torelli theorem for the injectivity of the period maps from the Torelli space $\T'$ and from its completion space $\T^H$ to the period domain.
Another corollary we will prove is the global Torelli theorem on the moduli space of polarized Calabi--Yau manifolds with level $m$ structure for any $m\geq 3$.

For any two integers $m, m'\geq 3$, let $\mathcal{Z}_m$ and $\mathcal{Z}_{m'}$ be the smooth quasi-projective manifolds as in Theorem \ref{Szendroi theorem 2.2} and let $\mathcal{Z}^H_{m}$ and $\mathcal{Z}^H_{m'}$ be their completions with respect to the Hodge metric. Let $\mathcal{T}^H_{m}$ and $\mathcal{T}^H_{m'}$ be the universal cover spaces of $\mathcal{Z}^H_{m}$ and $\mathcal{Z}_{{m'}}^H$ respectively.
From Theorem \ref{injectivityofPhiH}, we know that both $\mathcal{T}^H_{m}$ and $\mathcal{T}^H_{m'}$ are biholomorphic to $A\cap D$. Hence we have the following proposition.
\begin{proposition}\label{indepofm}
For any $m\geq 3$, the complex manifold $T^H_m$ is complete equipped with the Hodge metric,  and is biholomorphic to $A\cap D$. So for any integers $m, \, m'\geq 3$,  the complex manifolds $\mathcal{T}^H_m$ and $\mathcal{T}^H_{m'}$ are biholomorphic to each other.
\end{proposition}
Proposition \ref{indepofm} shows that $\mathcal{T}^H_{m}$ is independent of the choice of the level $m$ structure, and it allows us to introduce the following notations.

We define the complex manifold $\mathcal{T}^H=\mathcal{T}^H_{m}$, the holomorphic map $$i_{\mathcal{T}}: \,\mathcal{T}\to \mathcal{T}^H$$ by $i_{\mathcal{T}}=i_m$, and the extended period map $\Phi^H:\, \mathcal{T}^H\rightarrow D$ by $\Phi^H=\Phi^H_{m}$ for any $m\geq 3$.
In particular, with these new notations, we have the commutative diagram
\begin{align}\label{main diagram}
\xymatrix{\mathcal{T}\ar[r]^{i_{\mathcal{T}}}\ar[d]^{\pi_m}&\mathcal{T}^H\ar[d]^{\pi^H_{m}}\ar[r]^{{\Phi}^{H}}&D\ar[d]^{\pi_D}\\
\mathcal{Z}_m\ar[r]^{i}&\mathcal{Z}^H_{m}\ar[r]^{{\Phi}_{{\mathcal{Z}_m}}^H}&D/\Gamma.
}
\end{align}

The main result of this section is the following,
\begin{theorem}\label{main theorem}
The complex manifold $\mathcal{T}^H$ is a complex affine manifold which can be embedded into $A\simeq \mathbb{C}^N$ and it is the completion space of the Torelli space $\mathcal{T}'$ with respect to the Hodge metric. Moreover, the extended period map $$\Phi^H: \,\mathcal{T}^H\rightarrow N_+\cap D$$ is a holomorphic injection.
\end{theorem}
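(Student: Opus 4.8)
The plan is to assemble Theorem \ref{main theorem} from the pieces already developed, with the only genuinely new content being that $\mathcal{T}^H$ is the \emph{smooth} Hodge metric completion of $\mathcal{T}$ and that $i_{\mathcal{T}}$ realizes $\mathcal{T}$ as an open dense subset. By Theorem \ref{THmaffine} and Corollary \ref{embedTHmintoCN} (applied to any fixed $m\geq 3$ and transported via Proposition \ref{indepofm}), $\mathcal{T}^H=\mathcal{T}^H_{_m}$ is a complex affine manifold which embeds into $\mathbb{C}^N$ via $\tau^H_{_m}$, and by Corollary \ref{injective PhiHm} the extended period map $\Phi^H=\Phi^H_{_m}:\,\mathcal{T}^H\to N_+\cap D$ is a holomorphic injection. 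So the content that remains is: (a) $\mathcal{T}_0:=i_{\mathcal{T}}(\mathcal{T})=\mathcal{T}_m$ is well-defined (independent of $m$), which follows from Proposition \ref{indepofm} since $\mathcal{T}_m\cong\Phi(\mathcal{T})$ canonically; (b) $i_{\mathcal{T}}:\,\mathcal{T}\to\mathcal{T}_0$ is a covering map; (c) $i_{\mathcal{T}}$ is in fact one-to-one, hence a biholomorphism onto the open dense submanifold $\mathcal{T}_0\subseteq\mathcal{T}^H$; and (d) $\mathcal{T}^H$ is then the Hodge metric completion of $\mathcal{T}$ (via $i_{\mathcal{T}}$), using Proposition \ref{opend} which identifies $\mathcal{T}^H_{_m}$ as the completion of $\mathcal{T}_m$ together with the fact that $i_{\mathcal{T}}$ is a local isometry.

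First I would establish (b): $i_{\mathcal{T}}=i_m:\,\mathcal{T}\to\mathcal{T}_0$ is a covering map. Recall $\mathcal{T}$ is the universal cover of $\mathcal{Z}_m$ with covering map $\pi_m$, and $\pi^H_{_m}|_{\mathcal{T}_0}:\,\mathcal{T}_0\to\mathcal{Z}_m$ is a covering map by Proposition \ref{opend} (it is the restriction of the covering $\pi^H_{_m}$ over the open subset $\mathcal{Z}_m$). Since $\pi^H_{_m}\circ i_{\mathcal{T}}=i\circ\pi_m$ with $i$ the inclusion, $i_{\mathcal{T}}$ is a lift of the covering $\pi_m$ through the covering $\pi^H_{_m}|_{\mathcal{T}_0}$; by the standard lifting criterion and the fact that a lift of a covering map between covers of the same base is itself a covering map, $i_{\mathcal{T}}:\,\mathcal{T}\to\mathcal{T}_0$ is a covering map. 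Then for (c), it suffices to show $\pi_1(\mathcal{T}_0)=0$, for then the covering $i_{\mathcal{T}}$ has trivial deck group and is one-to-one. I expect this to be the main obstacle: one must compare $\pi_1(\mathcal{T}_0)$ to the deck transformations of $\pi_m$, and the point is that any deck transformation of $\pi_m:\,\mathcal{T}\to\mathcal{Z}_m$ acts on $\mathcal{T}$ through the monodromy action on the marking/period data, hence descends trivially once we pass to $\mathcal{T}_0\cong\Phi(\mathcal{T})$; more precisely, since $\Phi=\Phi^H\circ i_{\mathcal{T}}$ with $\Phi^H$ injective, any loop in $\mathcal{T}_0$ whose lift to $\mathcal{T}$ is non-closed would force $\Phi$ to take the same value at two inequivalent marked Calabi--Yau manifolds, contradicting that the marking distinguishes them. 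This is exactly where ``the markings of the Calabi--Yau manifolds and the simply connectedness of $\mathcal{T}$ come into play'' as the introduction signals, and I would run it through Lemma \ref{fund} relating $\pi_1(\mathcal{Z}_m)$ and $\pi_1(\mathcal{Z}^H_{_m})$.

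Having shown $i_{\mathcal{T}}:\,\mathcal{T}\xrightarrow{\sim}\mathcal{T}_0$ is a biholomorphism onto an open dense submanifold of $\mathcal{T}^H$, step (d) is then routine: $i_{\mathcal{T}}$ is a local isometry for the Hodge metrics (since $\Phi=\Phi^H\circ i_{\mathcal{T}}$ and the Hodge metrics on $\mathcal{T}$, $\mathcal{T}^H$ are both pulled back from $D$ via the period maps), $\mathcal{T}^H$ is complete with respect to the Hodge metric by construction (it is the universal cover of the complete space $\mathcal{Z}^H_{_m}$, and $\mathcal{Z}^H_{_m}$ is complete by Lemma \ref{cidim}), and $\mathcal{T}_0$ is dense in $\mathcal{T}^H$ because $\mathcal{Z}_m$ is dense in $\mathcal{Z}^H_{_m}$ and $\pi^H_{_m}$ is a covering. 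By uniqueness of metric completions, $\mathcal{T}^H$ is the Hodge metric completion of $\mathcal{T}\cong\mathcal{T}_0$. Finally, combining with Theorem \ref{THmaffine}, Corollary \ref{embedTHmintoCN}, and Corollary \ref{injective PhiHm} gives all assertions of Theorem \ref{main theorem}: $\mathcal{T}^H$ is a complex affine manifold embeddable in $\mathbb{C}^N$ (via $\tau^H=P\circ\Phi^H$), it is the Hodge metric completion of $\mathcal{T}$, and $\Phi^H:\,\mathcal{T}^H\to N_+\cap D$ is a holomorphic injection. As an immediate corollary $\Phi=\Phi^H\circ i_{\mathcal{T}}$ is injective, which is the global Torelli theorem.
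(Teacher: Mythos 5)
Your overall architecture matches the paper's: the affineness of $\mathcal{T}^H$, its embedding into $\mathbb{C}^N$, and the injectivity of $\Phi^H$ are indeed just assembled from Theorem \ref{THmaffine}, Corollary \ref{embedTHmintoCN} and Corollary \ref{injective PhiHm}, and the genuinely remaining content is exactly your steps (b)--(d): that $i_{\mathcal{T}}:\,\mathcal{T}\to\mathcal{T}_0$ is a covering map, that it is one-to-one, and that completeness plus density then identify $\mathcal{T}^H$ with the Hodge metric completion of $\mathcal{T}$. Steps (b) and (d) are carried out essentially as you describe.

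The gap is in step (c). Your justification for $\pi_1(\mathcal{T}_0)=0$ is that a nontrivial deck transformation would ``force $\Phi$ to take the same value at two inequivalent marked Calabi--Yau manifolds, contradicting that the marking distinguishes them.'' But the assertion that two inequivalent polarized and marked Calabi--Yau manifolds cannot share a period point is precisely the global Torelli theorem for $\Phi:\,\mathcal{T}\to D$, which the paper deduces as a \emph{corollary} of this theorem; at this stage only local injectivity of $\Phi$ is known, so nothing rules out $\Phi(p)=\Phi(q)$ for $p\neq q$. As stated, the step is circular. What the paper actually does is exploit the independence of $\mathcal{T}_0=\mathcal{T}_m$ from the level $m$ (your step (a), via Proposition \ref{indepofm}): choosing a tower $m_k\mid m_{k+1}$, the covering maps $\mathcal{Z}_{m_{k+1}}\to\mathcal{Z}_{m_k}$ give a decreasing chain of subgroups $\pi_1(\mathcal{Z}_{m_k})$ whose inverse limit is $\pi_1(\mathcal{T})=0$, while $\pi_1(\mathcal{T}_0)\subseteq\pi_1(\mathcal{Z}_{m_k})$ for \emph{every} $k$ because $\pi^H_{m_k}|_{\mathcal{T}_0}:\,\mathcal{T}_0\to\mathcal{Z}_{m_k}$ is a covering for every $k$; hence $\pi_1(\mathcal{T}_0)$ is trivial. (Equivalently, as in the paper's alternative remark, $i_{\mathcal{T}}(p)=i_{\mathcal{T}}(q)$ forces the change-of-marking matrix $A$ to satisfy $A\equiv\mathrm{Id}\ (\mathrm{mod}\ m)$ for every $m\geq 3$, hence $A=\mathrm{Id}$ and $p=q$.) This is where the markings and all the level structures enter substantially, and your proposal does not supply it; Lemma \ref{fund} (surjectivity of $\pi_1(\mathcal{Z}_m)\to\pi_1(\mathcal{Z}^H_{_m})$) is what feeds Proposition \ref{opend} and does not by itself yield the triviality of $\pi_1(\mathcal{T}_0)$.
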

\begin{proof}By the definition of $\mathcal{T}^H$ and Theorem \ref{injectivityofPhiH}, it is easy to see that $\mathcal{T}^H$ is a complex affine manifold, which can be embedded into $A\simeq \mathbb{C}^N$.
It is also not hard to see that the injectivity of $\Phi^H$ follows directly from Corollary \ref{injective PhiHm} by the definition of $\Phi^H$. Now we only need to show that $\mathcal{T}^H$ is the Hodge metric completion space of $\mathcal{T}'$, which follows from the following lemma. \end{proof}
\begin{lemma}\label{injectivity of i}
Let $\T_{0}\subset \T^{H}$ be defined by $\T_{0}:=i_{\T}(\T)$. Then $\T_{0}$ is biholomorphic to the Torelli space $\T'$.
\end{lemma}
\begin{proof}  Level structures will play a crucial role in the proof. First note that diagram \eqref{main diagram} together with diagram \eqref{periods} in Section \ref{section period map} implies the following commutative diagram
$$\xymatrix{
\T \ar[dr]^-{\pi}\ar[rr]^-{i_{\T}} \ar[dd]^-{\pi_m} &&\T_{0}  \ar[dd]^-{\pi_{m}^{H}|_{\T_{0}}}\ar[rr]^-{\P^{H}|_{\T_{0}}}  &&D\ar[dd]^-{\pi_{D}}\\
&\T'\ar[dl]_-{\pi'_{m}}\ar[urrr]^-{\P'}&&&\\
\Z \ar[rr]^-{i} &&\ZZ \ar[rr]^-{\P_{\Z}^{H}} &&D/\Gamma .}$$

Now we construct a map $\pi_0:\, \T_{0}\to \T'$. For any point $o$ in $\T_{0}$, we can choose $p\in \T$ such that $i_{\T}(p)=o$. We define $\pi_0:\, \T_{0}\to \T'$ such that $\pi_0(o) = \pi (p)$. Then clearly $\pi_0$ fits in the following commutative diagram
$$\xymatrix{
\T \ar[dr]^-{\pi}\ar[rr]^-{i_{\T}} \ar[dd]^-{\pi_m} &&\T_{0} \ar[dl]_-{\pi^{0}} \ar[dd]^-{\pi_{m}^{H}|_{\T_{0}}}\ar[rr]^-{\P^{H}|_{\T_{0}}}  &&D\ar[dd]^-{\pi_{D}}\\
&\T'\ar[dl]_-{\pi'_{m}}\ar[urrr]^-{\P'}&&&\\
\Z \ar[rr]^-{i} &&\ZZ \ar[rr]^-{\P_{\Z}^{H}} &&D/\Gamma .}$$

We first show that the map $\pi_0:\, \T_0\to \T'$ defined this way is well-defined and satisfies $\P^{H}|_{\T_{0}}=\P'\circ \pi_{0}$.

Let $p\neq q\in \T$ be two points such that
$i_{\T}(p)=i_{\T}(q)\in \T_{0}$.
We choose some $m\ge 3$ such that $\T^{H}\simeq \TT$. Then $$i_m(p)=i_m(q)\in \T_{m} \subset \TT.$$
Let $[M_p, L_p, \gamma_p]$ and $[M_q, L_q, \gamma_q]$ denote the fibers over the points $p$ and $q$ of the analytic family $\phi':\, \U'\to \T'$ respectively, where $\gamma_p$ and $\gamma_q$ are two markings identifying the fixed lattice $\Lambda$ isometrically with $H^{n}(M_{p},\mathbb{Z})/\text{Tor}$ and $H^{n}(M_{q},\mathbb{Z})/\text{Tor}$ respectively.

Since $i_m(p)=i_m(q)$ and $i\circ \pi_m=\pi^H_{m}\circ i_m$, we have $$i\circ\pi_m(p)=i\circ\pi_m(q)\in \Z,$$ i.e. $\pi_m(p)=\pi_m(q)\in \Z$. By the definition of $\Z$, there exists a biholomorphic map $f:\, M_p\to M_q$ such that $f^*L_q=L_p$ and
$$f^*\gamma_q= \gamma_p\cdot A,$$
where $A\in \text{Aut}(H^{n}(M_{p},\mathbb{Z})/\text{Tor},Q))$ satisfies
$$A=(A_{ij})\equiv\text{Id}\quad(\text{mod } m), \text{ for }m\ge 3.$$

Let $m_0$ be an integer such that $m_0> |A_{ij}|$ for any $i,j$. Proposition \ref{indepofm} implies that $\T^{H}\simeq \TT\simeq \T_{m_0}^{H}$. The same argument as above implies that $\pi_{m_0}(p)=\pi_{m_0}(q)\in \mathcal{Z}_{m_{0}}$ and hence
$$A=(A_{ij})\equiv\text{Id}\quad(\text{mod } m_0).$$
Since each $m_0> |A_{ij}|$, we have $A=\text{Id}$.

Therefore, we have found a biholomorphic map $f:\, M_{p}\to M_q$ such that $f^*L_q=L_p$ and $f^*\gamma_q= \gamma_p$. This implies that $p$ and $q$ in $\T$ actually correspond to the same point in the Torelli space $\T'$, i.e. $\pi(p) =\pi(q)$ in $\T'$. Therefore we have proved that the map $$\pi_0:\, \T_0\to \T'$$ is well-defined.
From the definitions of the period map and that of the Torelli space, we deduce that the map $\pi_0:\, \T_0\to \T'$ satisfies that $$\P^{H}|_{\T_{0}}=\P'\circ \pi_{0}.$$

Clearly $\pi_0$ is surjective, because $\pi:\, \T \to \T'$ is surjective. Since $\P^{H}:\, \T^{H}\to D$ is injective, so is the restriction map $\P^{H}|_{\T_{0}}:\, \T_{0}\to D$, which implies the injectivity of the map $\pi_{0}:\, \T_{0}\to \T'$. Hence $\pi_{0}:\, \T_{0}\to \T'$ is in fact a biholomorphic map. This completes the proof of  the lemma.
\end{proof}

We remark that there is another approach to Lemma \ref{injectivity of i}, by using the level structures and fundamental groups.
Since $$\mathcal{T}_{0}\simeq \mathcal{T}_m=(\pi^H_{m})^{-1}(\mathcal{Z}_m)$$ for any $m\ge 3$, $\pi_m^H:\, \mathcal{T}_0\to \mathcal{Z}_m$ is a covering map. Thus the fundamental group of $\mathcal{T}_0$ is a subgroup of the fundamental group of $\mathcal{Z}_m$, that is, $ \pi_1(\mathcal{T}_0)\subseteq \pi_1(\mathcal{Z}_m)$ for any $m\geq 3$.
Moreover, the universal property of the universal covering map $\pi_m: \,\mathcal{T}\to \mathcal{Z}_m$ with the identity $$i_m\circ \pi_m=\pi_{m}^H|_{\mathcal{T}_0}\circ i_{\mathcal{T}}$$
implies that $i_{\mathcal{T}}:\,\mathcal{T}\to \mathcal{T}_0$ is also a covering map, as proved in Proposition \ref{opend}.

On the other hand, let $\{m_k\}_{k=1}^{\infty}$ be a sequence of positive integers such that $m_k< m_{k+1}$ and $m_k|m_{k+1}$ for each $k\geq 1$. From the discussion of Lecture 10 of \cite{Popp}, or Page 5 of \cite{sz}, there is a natural covering map from $\mathcal{Z}_{m_{k+1}}$ to  $\mathcal{Z}_{m_{k}}$ for each $k$.
Thus the fundamental group $\pi_1(\mathcal{Z}_{m_{k+1}})$ is a subgroup of $\pi_1(\mathcal{Z}_{m_{k}})$ for each $k$. Hence, the inverse system of fundamental groups
\begin{align*}
\xymatrix{
\pi_1(\mathcal{Z}_{m_1})&\pi_1(\mathcal{Z}_{m_2})\ar[l]&\cdots\cdots\ar[l]&\pi_1(\mathcal{Z}_{m_k})\ar[l]&\cdots\ar[l]
}
\end{align*}
has an inverse limit, which is the fundamental group of the Torelli space $\mathcal{T}'$. Since $\pi_1(\mathcal{T}_0)\subseteq \pi_1(\mathcal{Z}_{m_{k}})$ for any $k$, we have the inclusion $$\pi_1(\mathcal{T}_0)\subseteq\pi_1(\mathcal{T}').$$ This implies that $\T_0$ is a covering of $\T'$. Let $\pi_{0}:\, \T_{0}\to \T'$ be the covering map. The injectivity of $\pi_{0}$ follows as in the last step of the proof of Lemma \ref{injectivity of i}, from which we also get that $\pi_{0}:\, \T_{0}\to \T'$ is biholomorphic.

Define the injective map $$\pi^{0}:\, \T' \to \T^{H} \text{ with }\pi^{0}=(\pi_{0})^{-1},$$ we then have the relation that $$\P'=\P^{H}\circ \pi^{0}:\, \T' \to D.$$
From the injectivity of $\P^{H}$ and $\pi^{0}$, we deduce the global Torelli theorem on the Torelli space of Calabi--Yau manifolds as follows.

\begin{corollary}[Global Torelli theorem]\label{Global Torelli theorem}
The period map $$\Phi':\, \mathcal{T}'\rightarrow D$$ is injective.
\end{corollary}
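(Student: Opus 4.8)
The plan is to deduce injectivity of $\Phi$ purely formally from the factorization $\Phi = \Phi^H \circ i_{\mathcal{T}}$ that was set up in the commutative diagram defining $\mathcal{T}^H$, $i_{\mathcal{T}}$, and $\Phi^H$. Concretely, first I would invoke Theorem \ref{main theorem} (equivalently Corollary \ref{injective PhiHm} together with the definition $\Phi^H = \Phi^H_{_m}$), which gives that the extended period map $\Phi^H : \mathcal{T}^H \to N_+ \cap D$ is a holomorphic injection. Second, I would invoke Lemma \ref{injectivity of i}, which asserts that $i_{\mathcal{T}} : \mathcal{T} \to \mathcal{T}^H$ is an embedding, in particular injective. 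Since by construction (the choice of $i_m$ and $\Phi^H_{_m}$ made in Section \ref{defofTH}, and the definitions of $i_{\mathcal{T}}$ and $\Phi^H$) we have $\Phi = \Phi^H \circ i_{\mathcal{T}}$, the map $\Phi$ is a composition of two injective maps and hence injective. This is the entire argument.

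Given that the corollary is immediate once the two cited results are in hand, there is no genuine obstacle at this stage; the real content has already been established earlier. If I were reconstructing the proof independently, the step I would expect to be the crux is not this corollary but rather Lemma \ref{injectivity of i} — showing $i_{\mathcal{T}}$ is one-to-one — since that is where the simple connectedness of $\mathcal{T}$, the covering-map structure $i_{\mathcal{T}}: \mathcal{T} \to \mathcal{T}_0$, and the triviality of $\pi_1(\mathcal{T}_0)$ (obtained via the inverse system of the groups $\pi_1(\mathcal{Z}_{m_k})$) all have to be combined. Likewise, the injectivity of $\Phi^H_{_m}$ rests on the affineness of $\tau^H_{_m}$ and the completeness of $\mathcal{T}^H_{_m}$ (Theorem \ref{injectivityofPhiH} via Lemma \ref{straightline}). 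But for the present statement all of that may be taken as given.

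Thus I would present the proof in essentially one line: $\Phi = \Phi^H \circ i_{\mathcal{T}}$ is a composition of the injective maps $i_{\mathcal{T}}$ (Lemma \ref{injectivity of i}) and $\Phi^H$ (Theorem \ref{main theorem}), hence injective. I would perhaps add a sentence remarking that this is precisely the global Torelli theorem for the period map from the Teichm\"uller space of polarized and marked Calabi--Yau manifolds to the period domain, completing the circle of results of the paper.
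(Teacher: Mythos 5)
Your argument is exactly the paper's own: the corollary is stated immediately after the observation that $\Phi=\Phi^H\circ i_{\mathcal{T}}$ with $\Phi^H$ injective (Theorem \ref{main theorem}) and $i_{\mathcal{T}}$ an embedding (Lemma \ref{injectivity of i}), so $\Phi$ is a composition of injections. Your proposal is correct and identical in substance, including the correct identification of where the real work lies.
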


As a corollary, we have the global Torelli theorem on the moduli space $\Z$ of polarized Calabi--Yau manifolds with level $m$ structure for any $m\geq 3$.

\begin{corollary}\label{Global Torelli theorem'}
The period map $$\Phi_{\Z}:\, \Z \rightarrow D/\Gamma$$ is injective.
\end{corollary}
\begin{proof}
Let $p_{1}$ and $p_{2}$ be two points in $\Z$ such that $\Phi_{\Z}(p_{1})=\Phi_{\Z}(p_{2})$ in $D/\Gamma$. Let $$[M_{i},L_{i},[\gamma_{i}]_{m}], \, i=1,2$$ be the fibers over $p_{1}$ and $p_{2}$ of the analytic family $f_{m} :\, \U_{m}\to \Z$ respectively.  From Lemma \ref{injectivity of i} we know that $$\pi^0:\, \T'\to \T^{H}$$ identifies $\T'$ to the Zariski open submanifold $$\T_0=i_\T(\T) \simeq i_m(\T)\subseteq \TT$$ of $\TT$, and that $\T_0$ is a cover of $\Z$ by Proposition \ref{opend}. In the following discussion we will use freely the identification $$\T_0\simeq \T'.$$

There exist two points ${q}_{1}$ and ${q}_{2}$ in $\T'$ over which are the fibers $$[M_{i},L_{i},\gamma_{i}], \, i=1,2$$ of the analytic family $\phi':\, \U'\to \T'$ respectively,
such that
$$\pi_{m}'({q}_{i})=p_{i}, \, i=1,2$$
under the covering map $\pi_{m}':\, \T' \to \Z$.

The condition $\Phi_{\Z}(p_{1})=\Phi_{\Z}(p_{2})$ implies that there exists $\gamma\in \Gamma$ such that
$$\Phi'({q}_{1})=\gamma\Phi'({q}_{2}).$$
Let ${q}_{1}'\in \T'$ correspond to the triple $[M_{1},L_{1},\gamma_{1}\gamma]$. Then by the definition of the period map $\Phi'$ in \eqref{defn of P'}, we have
\begin{eqnarray*}
\Phi'(q_{1}')&=&(\gamma_{1}\gamma)^{-1}(F^n(M_{1})\subseteq\cdots\subseteq F^0(M_{1}))\\
             &=&\gamma^{-1}\gamma_{1}^{-1}(F^n(M_{1})\subseteq\cdots\subseteq F^0(M_{1}))\\
             &=&\gamma^{-1}\Phi'(q_{1})\\
             &=&\Phi'(q_{2}).
\end{eqnarray*}
By Corollary \ref{Global Torelli theorem}, we see that $q_{1}'=q_{2}$ in $\T'$.

Now we have the inclusion $\pi^{0}:\, \T' \to \T^{H}$ and the inclusion $i:\, \Z \to \ZZ$, therefore we can view the points $p_{1}$ and $p_{2}$ as points in $\ZZ$ and the points $q_{1}$, $q'_{1}$ and $q_{2}$ as points in $\T^{H}$.

Since $\gamma$ lies in the image of the monodromy representation $\rho:\, \pi_{1}(\ZZ)\to \Gamma$, there exists a $\tilde{\gamma}\in \pi_{1}(\ZZ)$ such that
$$\rho(\tilde{\gamma})=\gamma \text{ and } q_{1}'=q_{1}\tilde{\gamma},$$ where $q_{1}\tilde{\gamma}$ is defined by the action of $\pi_{1}(\ZZ)$ on the universal cover $\T^{H}$, and the action of $\tilde{\gamma}$ on the fiber $[M_{1},L_{1},\gamma_{1}]$ over $q_{1}$ is defined by
$$[M_{1},L_{1},\gamma_{1}]\tilde{\gamma}= [M_{1},L_{1},\gamma_{1}\gamma].$$
So we have  $$p_{1}=\pi_{m}^{H}(q_{1})=\pi_{m}^{H}(q_{1}')=\pi_{m}^{H}(q_{2})=p_{2},$$
which proves the injectivity of $\Phi_{\Z}$.
\end{proof}

As another important consequence, we prove the following property of $\mathcal{T}^H$.
\begin{theorem}\label{important result}
The completion space $\mathcal{T}^H$ is a bounded domain of holomorphy in $\mathbb{C}^N$; thus there exists a complete K\"ahler--Einstein metric on $\mathcal{T}^H$.
\end{theorem}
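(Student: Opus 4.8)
The plan is to construct a continuous plurisubharmonic exhaustion function on $\mathcal{T}^H$ and then to apply the solution of the Levi problem together with the theorem of Mok--Yau. By Theorem \ref{main theorem} and Corollary \ref{embedTHmintoCN} the manifold $\mathcal{T}^H$ is already realized, via the injective map $P\circ\Phi^H$, as a bounded domain $\Omega\subseteq\mathbb{C}^N$, and by Corollary \ref{injective PhiHm} and Lemma \ref{GTonTH} the extended period map $\Phi^H:\,\mathcal{T}^H\to N_+\cap D$ is a holomorphic injection satisfying Griffiths transversality. Since a bounded domain in $\mathbb{C}^N$ is a domain of holomorphy exactly when it is pseudoconvex, it suffices to exhibit a continuous plurisubharmonic function $\varphi$ on $\mathcal{T}^H$ all of whose sublevel sets are relatively compact in $\mathcal{T}^H$.

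First I would invoke Proposition \ref{general f}, which provides a plurisubharmonic function $f$ on $N_+\cap D$ adapted to horizontal holomorphic maps into $D$; pulling it back via $\Phi^H$ gives $\varphi:=f\circ\Phi^H$, and plurisubharmonicity of $\varphi$ follows because $f$ is plurisubharmonic along horizontal slices while $\Phi^H$ is horizontal by Lemma \ref{GTonTH}. To see that $\varphi$ is an exhaustion of $\mathcal{T}^H$, I would combine the completeness of $\mathcal{T}^H$ with respect to the Hodge metric with the injectivity of $\Phi^H$: completeness makes the Hodge distance from a fixed base point proper, and the injectivity of $\Phi^H$ ensures that the growth of $f$ along the image is not lost, so that each set $\{\varphi\le c\}$ is closed and of finite Hodge diameter, hence compact in $\mathcal{T}^H$. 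Therefore $\mathcal{T}^H$ admits a continuous plurisubharmonic exhaustion function, so it is pseudoconvex; being bounded in $\mathbb{C}^N$, it is a bounded domain of holomorphy.

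For the last assertion, once $\mathcal{T}^H$ is known to be a bounded pseudoconvex domain in $\mathbb{C}^N$, the existence of a complete K\"ahler--Einstein metric of negative Ricci curvature on $\mathcal{T}^H$ is immediate from the theorem of Mok--Yau in \cite{MokYau}, which I would cite directly.

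The hard part will be the construction and verification of $\varphi$, namely that $f\circ\Phi^H$ is at once plurisubharmonic and proper. Plurisubharmonicity is delicate because $f$ need not be plurisubharmonic on all of $N_+$, only along horizontal directions, so Griffiths transversality of $\Phi^H$ must be used at every point; properness genuinely requires both the completeness of $\mathcal{T}^H$ and the global injectivity of $\Phi^H$ established in the previous section, since without injectivity the pulled-back function could fail to be proper on $\mathcal{T}^H$.
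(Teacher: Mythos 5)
Your proposal is correct and follows essentially the same route as the paper: both pull back the Griffiths--Schmid exhaustion function of Proposition \ref{general f} by $\Phi^H$, use Griffiths transversality (Lemma \ref{GTonTH}) to get plurisubharmonicity, and combine the injectivity of $\Phi^H$ with the Hodge-metric completeness of $\mathcal{T}^H$ to show that the sublevel sets are compact, before invoking Mok--Yau for the K\"ahler--Einstein metric. The only cosmetic difference is that the paper verifies compactness of $(f\circ\Phi^H)^{-1}(-\infty,c]$ by an explicit convergent-subsequence argument (using that $\Phi^H$ is an isometry onto its image and that $f^{-1}(-\infty,c]$ is compact in $D$) rather than your ``closed and of finite Hodge diameter, hence compact'' phrasing.
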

We recall that a $\mathcal{C}^2$ function $\rho:\, \Omega\rightarrow \mathbb{R}$ on a domain $\Omega\subseteq\mathbb{C}^n$ is \emph{plurisubharmornic} if and only if its Levi form is positive definite at each point in $\Omega$. Given a domain $\Omega\subseteq \mathbb{C}^n$, a function $$f:\, \Omega\rightarrow \mathbb{R}$$ is called an \emph{exhaustion function} if for any $c\in \mathbb{R}$, the set
$$\{z\in \Omega\,|\, f(z)< c\}$$ is relatively compact in $\Omega$.
The following well-known theorem provides a definition for domains of holomorphy. For example, one may refer to \cite{Hom} for details.
\begin{proposition}
An open set $\Omega\subset \mathbb{C}^n$ is a domain of holomorphy if and only if there exists a continuous plurisubharmonic function $f:\, \Omega\to \mathbb{R}$ such that $f$ is also an exhaustion function.
\end{proposition}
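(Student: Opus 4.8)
The plan is to establish the two implications of the stated equivalence separately, following the classical circle of ideas surrounding the Levi problem; essentially all of the content sits in one of the two directions. Throughout, write $\delta_\Omega(z)=\mathrm{dist}(z,\partial\Omega)$ for the Euclidean distance of $z\in\Omega$ to the boundary.

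\textbf{From a domain of holomorphy to an exhaustion.} First I would invoke the Cartan--Thullen theorem, by which a domain of holomorphy $\Omega$ is holomorphically convex, and then the classical fact that a holomorphically convex domain is pseudoconvex, meaning precisely that $-\log\delta_\Omega$ is plurisubharmonic on $\Omega$; this last step rests on the continuity principle, where a failure of plurisubharmonicity would produce analytic discs with boundaries in a fixed compact subset of $\Omega$ but interiors approaching $\partial\Omega$, and a Hartogs-type extension across the limiting disc would force every function holomorphic on $\Omega$ to extend past a boundary point. Granting this, I would take $f(z)=|z|^2-\log\delta_\Omega(z)$. This function is continuous and plurisubharmonic, being the sum of the strictly plurisubharmonic $|z|^2$ and the plurisubharmonic $-\log\delta_\Omega$; and each sublevel set $\{z\in\Omega:\, f(z)<c\}$ is bounded by the $|z|^2$ term and stays at positive distance from $\partial\Omega$ by the $-\log\delta_\Omega$ term, hence is relatively compact in $\Omega$. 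Thus $f$ is the required continuous plurisubharmonic exhaustion function.

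\textbf{From an exhaustion to a domain of holomorphy.} This is the Levi problem and is the genuine obstacle; I would carry it out in three steps, following Hörmander \cite{Hom}. Step one is regularization: starting from the given continuous plurisubharmonic exhaustion, smoothing by convolution on a relatively compact exhausting family of subdomains, together with an added strictly plurisubharmonic term $\varepsilon|z|^2$, yields a smooth \emph{strictly} plurisubharmonic exhaustion $\psi\in C^\infty(\Omega)$, placing $\Omega$ in the pseudoconvex setting of the $L^2$ theory. Step two is the analytic core: Hörmander's $L^2$ existence theorem for $\overline{\partial}$ asserts that on such an $\Omega$, for a suitable plurisubharmonic weight $\varphi$ and any $\overline{\partial}$-closed $(0,1)$-form $g$ with $\int_\Omega|g|^2e^{-\varphi}\,d\lambda<\infty$, the equation $\overline{\partial}u=g$ has a solution $u$ with $\int_\Omega|u|^2e^{-\varphi}\,d\lambda$ bounded in terms of $\int_\Omega|g|^2e^{-\varphi}\,d\lambda$. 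Step three converts this solvability into holomorphic functions with prescribed singular behaviour: given any boundary point and a local holomorphic function singular there, one cuts it off to a global smooth function, applies $\overline{\partial}$ to obtain a compactly supported $\overline{\partial}$-closed form $g$, solves $\overline{\partial}u=g$ with a weight chosen to force $u$ to vanish to high order at the chosen point, and subtracts $u$ to produce a global holomorphic function on $\Omega$ that remains singular at that boundary point. Since no boundary point then admits holomorphic extension, $\Omega$ is holomorphically convex, and hence a domain of holomorphy.

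\textbf{Main obstacle.} The hard part will be the second implication, the solution of the Levi problem: both the regularization producing a smooth strictly plurisubharmonic exhaustion and, above all, the $L^2$ estimates for $\overline{\partial}$ together with the weight-choices needed to manufacture globally singular holomorphic functions are the nontrivial ingredients, whereas the first implication and the verification that the constructed $f$ exhausts $\Omega$ are routine. The full technical execution of these steps is carried out in Hörmander \cite{Hom}.
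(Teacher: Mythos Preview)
Your proof outline is correct and follows the standard route through Cartan--Thullen, pseudoconvexity of the boundary distance, and H\"ormander's $L^2$ solution of the Levi problem. However, the paper does not actually prove this proposition: it is stated there as a well-known characterization of domains of holomorphy, with a reference to H\"ormander \cite{Hom} for the details, and is used only as a black box in the proof of Theorem \ref{important result}. So there is nothing to compare against---your sketch is precisely the content of the cited reference, which is all the paper invokes.
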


The following theorem in \cite[Section 3.1]{GS} gives us the basic ingredients to construct a plurisubharmoic exhaustion function on $\mathcal{T}^H$.
\begin{theorem}\label{general f}
On every manifold $D$, which is dual to a K\"ahler C-space, there exists an exhaustion function $f: \, D\rightarrow \mathbb{R}$, whose Levi form, restricted to $T^{1, 0}_h(D)$, is positive definite at every point of $D$.
\end{theorem}
We remark that in this proposition, in order to show $f$ is an exhaustion function on $D$, Griffiths and Schmid showed that the set $f^{-1}(-\infty, c]$ is compact in $D$ for any $c\in \mathbb{R}$.

\begin{proof}[Proof of Theorem \ref{important result}]
By Theorem \ref{main theorem}, we see that $\mathcal{T}^H$ is a bounded domain in $\mathbb{C}^N$. Therefore, once we show $\mathcal{T}^H$ is domain of holomorphy, the existence of K\"ahler-Einstein metric on it follows directly from the well-known theorem by Mok--Yau in \cite{MokYau}.

In order to show that $\mathcal{T}^H$ is a domain of holomorphy in $\mathbb{C}^N$, it is enough to construct a plurisubharmonic exhaustion function on $\mathcal{T}^H$.

Let $f$ be the exhaustion function on $D$ constructed in Theorem \ref{general f}, whose Levi form, when restricted to the horizontal tangent bundle $T^{1,0}_hD$ of $D$, is positive definite at each point of $D$. We claim that the composition function $f\circ \Phi^H$ is the demanded plurisubharmonic exhaustion function on $\mathcal{T}^H$.

By the Griffiths transversality of $\Phi^H$, the composition function $$f\circ \Phi^H:\,\mathcal{T}^H\to \mathbb{R}$$ is a plurisubharmonic function on $\mathcal{T}^H$. Thus it suffices to show that the function $f\circ \Phi^H$  is an exhaustion function on $\mathcal{T}^H$, which is enough to show that for any constant $c\in\mathbb{R}$,
$$(f\circ\Phi^H)^{-1}(-\infty, c]=(\Phi^H)^{-1}\left(f^{-1}(-\infty, c]\right)$$ is a compact set in $\mathcal{T}^H$.
Indeed, it has already been shown in \cite{GS} that the set $f^{-1}(-\infty, c]$ is a compact set in $D$. Now for any sequence $$\{p_k\}_{k=1}^\infty\subseteq (f\circ\Phi^H)^{-1}(-\infty, c],$$ we have $$\{\Phi^H(p_k)\}_{k=1}^\infty\subseteq f^{-1}(-\infty,  c].$$ Since $f^{-1}(-\infty, c]$ is compact in $D$, the sequence $\{\Phi^H(p_k)\}_{k=1}^\infty$ has a convergent subsequence. We denote this convergent subsequence by $$\{\Phi^H(p_{k_n})\}_{n=1}^\infty\subseteq \{\Phi^H(p_k)\}_{k=1}^\infty$$ with $k_n<k_{n+1}$,
and denote $$\lim_{k\to \infty}\Phi^H(p_k)=o_\infty\in D.$$

On the other hand, since the map $\Phi^H$ is injective and the Hodge metric on $\mathcal{T}^H$ is induced from the Hodge metric on $D$, the extended period map $\Phi^H$ is actually a global isometry onto its image. Therefore the sequence $\{p_{k_n}\}_{n=1}^\infty$ is also a Cauchy sequence that converges to $(\Phi^H)^{-1}(o_{\infty})$ with respect to the Hodge metric in $$(f\circ\Phi^H)^{-1}(-\infty, c]\subseteq \mathcal{T}^H.$$ In this way, we have proved that any sequence in $(f\circ\Phi^H)^{-1}(-\infty, c]$ has a convergent subsequence. Therefore $(f\circ\Phi^H)^{-1}(-\infty, c]$ is compact in $\mathcal{T}^H$, as was needed to show.
\end{proof}
\appendix
\section{Two topological lemmas}\label{topological lemmas}In this appendix we first prove the existence of the choice of $i_{m}$ and $\Phi^H_{m}$ in diagram \eqref{cover maps} such that $\Phi=\Phi^H_m\circ i_{m}$. Then we show a lemma that relates the fundamental group of the moduli space of Calabi--Yau manifolds to that of completion space with respect to the Hodge metric on $\mathcal{Z}_m$. The arguments only use elementary topology and the results may be well-known. We include their proofs here for the sake of completeness.

\begin{lemma}\label{choice}
There exists a suitable choice of $\ i_{m}$ and $\Phi_{m}^H$ such that $\Phi_{m}^H\circ i_{m}=\Phi.$
\end{lemma}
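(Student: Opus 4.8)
The plan is to work entirely with the universal covering spaces and the standard lifting criterion for covering maps. Recall that we are given the solid arrows in the diagram: the universal covering maps $\pi_m:\,\mathcal{T}\to\mathcal{Z}_m$ and $\pi^H_{_m}:\,\mathcal{T}^H_{_m}\to\mathcal{Z}^H_{_m}$, the inclusion $i:\,\mathcal{Z}_m\hookrightarrow\mathcal{Z}^H_{_m}$, the period map $\Phi:\,\mathcal{T}\to D$, the extended period map $\Phi^H_{_{\mathcal{Z}_m}}:\,\mathcal{Z}^H_{_m}\to D/\Gamma$, and the covering $\pi_D:\,D\to D/\Gamma$. First I would fix base points: choose $t_0\in\mathcal{T}$, set $z_0=\pi_m(t_0)\in\mathcal{Z}_m$, $z_0'=i(z_0)\in\mathcal{Z}^H_{_m}$, $d_0=\Phi(t_0)\in D$, and note $\pi_D(d_0)=\Phi^H_{_{\mathcal{Z}_m}}(z_0')$ by commutativity of the right square at the level of $\mathcal{Z}_m$. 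Since $\mathcal{T}$ is simply connected, the composite $i\circ\pi_m:\,\mathcal{T}\to\mathcal{Z}^H_{_m}$ lifts through the covering $\pi^H_{_m}$ to a map $i_m:\,\mathcal{T}\to\mathcal{T}^H_{_m}$ with $i_m(t_0)$ equal to a chosen preimage $\tilde z_0'\in(\pi^H_{_m})^{-1}(z_0')$; this is the existence of $i_m$.

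Next I would construct $\Phi^H_{_m}$. The map $\Phi^H_{_{\mathcal{Z}_m}}\circ\pi^H_{_m}:\,\mathcal{T}^H_{_m}\to D/\Gamma$ must be lifted through the covering $\pi_D:\,D\to D/\Gamma$; since $\mathcal{T}^H_{_m}$ is simply connected (being the universal cover of $\mathcal{Z}^H_{_m}$), such a lift exists, and among all lifts there is a unique one $\Phi^H_{_m}$ sending the base point $\tilde z_0'=i_m(t_0)$ to $d_0\in D$ — here I use that $\pi_D(d_0)=\Phi^H_{_{\mathcal{Z}_m}}(\pi^H_{_m}(\tilde z_0'))$, which is exactly the compatibility needed to pin down a lift with that prescribed value at a single point. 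This gives $\Phi^H_{_m}$ with $\pi_D\circ\Phi^H_{_m}=\Phi^H_{_{\mathcal{Z}_m}}\circ\pi^H_{_m}$ and $\Phi^H_{_m}(i_m(t_0))=d_0=\Phi(t_0)$.

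It remains to check $\Phi^H_{_m}\circ i_m=\Phi$ on all of $\mathcal{T}$. Both sides are continuous maps $\mathcal{T}\to D$ and, by construction, they agree at the base point $t_0$. Moreover, composing either side with $\pi_D$ gives $\pi_D\circ\Phi^H_{_m}\circ i_m=\Phi^H_{_{\mathcal{Z}_m}}\circ\pi^H_{_m}\circ i_m=\Phi^H_{_{\mathcal{Z}_m}}\circ i\circ\pi_m=\pi_D\circ\Phi$, using the commutativity already established on $\mathcal{Z}_m$ (i.e. $\Phi^H_{_{\mathcal{Z}_m}}\circ i=\pi_D\circ\Phi\circ\ (\text{section})$, equivalently $\Phi^H_{_{\mathcal{Z}_m}}\circ i\circ\pi_m=\pi_D\circ\Phi$). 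Hence both $\Phi^H_{_m}\circ i_m$ and $\Phi$ are lifts through $\pi_D$ of the same map $\mathcal{T}\to D/\Gamma$, agreeing at $t_0$; by the uniqueness of lifts of maps from a connected space through a covering, they coincide everywhere. This completes the proof.

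The only genuine subtlety — the step I would be most careful about — is verifying the base-point compatibility $\pi_D(d_0)=\Phi^H_{_{\mathcal{Z}_m}}(i(z_0))$, which is what allows all three lifts ($i_m$, $\Phi^H_{_m}$, and the uniqueness argument) to be carried out with consistent base-point choices; this follows because $\Phi^H_{_{\mathcal{Z}_m}}$ restricts to $\Phi_{\mathcal{Z}_m}$ on $\mathcal{Z}_m$ and $\Phi_{\mathcal{Z}_m}\circ\pi_m=\pi_D\circ\Phi$ by definition of $\Phi_{\mathcal{Z}_m}$. Everything else is a routine application of the lifting criterion and uniqueness of lifts for covering spaces, valid because $\mathcal{T}$ and $\mathcal{T}^H_{_m}$ are simply connected.
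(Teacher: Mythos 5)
Your argument is correct and is essentially the same as the paper's: both fix a base point, take an arbitrary lift $i_m$ of $i\circ\pi_m$, observe that $\Phi^H_{_m}\circ i_m$ and $\Phi$ are both lifts through $\pi_D$ of $\Phi_{\mathcal{Z}_m}\circ\pi_m$, and then choose the lift $\Phi^H_{_m}$ so that $\Phi^H_{_m}(i_m(p))=\Phi(p)$, concluding by uniqueness of lifts from a connected space. The base-point compatibility you single out is exactly the verification the paper makes via $\pi_D(\Phi(p))=\Phi_{\mathcal{Z}_m}(\pi_m(p))$.
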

\begin{proof} Recall the following commutative diagram:
\begin{align*} \xymatrix{\mathcal{T}\ar[r]^{i_{m}}\ar[d]^{\pi_m}&\mathcal{T}^H_{m}\ar[d]^{\pi_{m}^H}\ar[r]^{{\Phi}^{H}_{m}}&D\ar[d]^{\pi_D}\\
\mathcal{Z}_m\ar[r]^{i}&\mathcal{Z}^H_{m}\ar[r]^{{\Phi}_{{\mathcal{Z}_m}}^H}&D/\Gamma.
}
\end{align*}
Fix a reference point $p\in \mathcal{T}$. The relations $ i\circ\pi_m=\pi^H_{m}\circ  i_{m}$ and $$\Phi_{\mathcal{Z}_m}^H\circ \pi^H_{m} = \pi_D\circ\Phi^H_{m}$$
imply that $$\pi_D\circ\Phi^H_{m}\circ i_{m} = \Phi_{\mathcal{Z}_m}^H\circ i\circ\pi_m= \Phi_{\mathcal{Z}_m}\circ\pi_m.$$ Therefore $\Phi^H_{m}\circ i_{m}$ is a
lifting map of $\Phi_{\mathcal{Z}_m}$. On the other hand $\Phi: \, \mathcal{T}\to D$ is also a lifting of $\Phi_{\mathcal{Z}_m}$. In order to make
$$\Phi^H_{m}\circ i_{m}=\Phi,$$ one only needs to choose the suitable $ i_{m}$ and $\Phi^H_{m}$ such that these two maps agree on the reference point, i.e.
$\Phi^H_{m}\circ i_{m}(p)=\Phi(p).$

For an arbitrary choice of $ i_{m}$, we have $ i_{m}(p)\in {\mathcal{T}}_m^H$ and $$\pi^H_{m}( i_{m}(p))=i(\pi_m(p)).$$ Considering the point $ i_{m}(p)$ as a reference point in ${\mathcal{T}}_m^H$, we can choose $\Phi^H_{m}( i_{m}(p))$ to be any point from
$$\pi_D^{-1}(\Phi^H_{\mathcal{Z}_m}( i(\pi_m(p)))) = \pi_D^{-1}(\Phi_{\mathcal{Z}_m}(\pi_m(p))).$$
Moreover the relation $\pi_D(\Phi(p))=\Phi_{\mathcal{Z}_m}(\pi_m(p))$ implies that $$\Phi(p)\in \pi_D^{-1}(\Phi_{\mathcal{Z}_m}(\pi_m(p))).$$ Therefore we can choose $\Phi^H_{m}$ such that $\Phi^H_{m}( i_{m}(p))=\Phi(p).$ With this choice, we have $\Phi^H_{m}\circ i_{m}=\Phi$.
\end{proof}

\begin{lemma}\label{fund}
Let $\pi_1(\mathcal{Z}_m)$ and $\pi_1(\mathcal{Z}_m^H)$ be the fundamental groups of $\mathcal{Z}_m$ and $\mathcal{Z}_m^H$ respectively, and suppose the group morphism $$i_*: \, \pi_1(\mathcal{Z}_m) \to \pi_1(\mathcal{Z}_m^H)$$ is induced by the inclusion $i: \, \mathcal{Z}_m\to \mathcal{Z}_m^H$. Then $i_*$ is surjective.
\end{lemma}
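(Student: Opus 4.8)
The plan is to prove surjectivity of $i_*$ by using the fact, established in Lemma \ref{cidim} and its surrounding discussion, that the complement $A := \mathcal{Z}_m^H \setminus \mathcal{Z}_m$ is a closed complex analytic subvariety of complex codimension at least one in the smooth connected complex manifold $\mathcal{Z}_m^H$. Since $\mathcal{Z}_m^H$ is open in the smooth compactification $\bar{\mathcal{Z}}_m$ and $\bar{\mathcal{Z}}_m \setminus \mathcal{Z}_m$ is a normal crossing divisor, the set $A$ is either empty — in which case $\mathcal{Z}_m = \mathcal{Z}_m^H$ and there is nothing to prove — or a normal crossing divisor in $\mathcal{Z}_m^H$. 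In the latter case $A$ is closed and admits a locally finite decomposition into smooth locally closed submanifolds, namely the intersections of $k$ local branches for $k\ge 1$, each of real codimension $2k \ge 2$ in $\mathcal{Z}_m^H$. This real codimension bound is the only input I will need from the analytic structure.

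First I would fix the base point $z_0 \in \mathcal{Z}_m$, which is harmless since $\mathcal{Z}_m$ is open and dense in $\mathcal{Z}_m^H$, and take an arbitrary class $[\gamma] \in \pi_1(\mathcal{Z}_m^H, z_0)$ represented by a loop $\gamma \colon S^1 \to \mathcal{Z}_m^H$ based at $z_0$. By the smooth approximation theorem I may, up to a homotopy rel the base point inside $\mathcal{Z}_m^H$, assume $\gamma$ is smooth; since $z_0$ already lies in the open set $\mathcal{Z}_m$, this can be arranged so that $\gamma$ maps a neighborhood of the base point into $\mathcal{Z}_m$. Next I would invoke the parametric transversality (Thom) theorem to perturb $\gamma$, fixing it near the base point and keeping the perturbation $C^0$-small, so that it becomes transverse to each of the finitely many strata of $A$ that it meets (finitely many because the image of $\gamma$ is compact). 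Because $\dim S^1 = 1$ is strictly less than the real codimension $\ge 2$ of every such stratum, transversality forces the perturbed loop $\gamma'$ to be disjoint from that stratum, and hence from all of $A$; thus $\gamma'$ takes its values in $\mathcal{Z}_m$. The small perturbation is itself a homotopy rel $z_0$ in $\mathcal{Z}_m^H$ between $\gamma$ and $\gamma'$, so $i_*[\gamma'] = [\gamma]$ with $[\gamma'] \in \pi_1(\mathcal{Z}_m, z_0)$. As $[\gamma]$ was arbitrary, $i_*$ is surjective.

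This is, in essence, the classical statement that removing an analytic subset of real codimension at least two from a manifold does not enlarge the image of $\pi_1$, so I do not expect a genuine obstacle. The only points requiring mild care are bookkeeping the stratification of the possibly singular $A$ and checking that one can achieve transversality to all relevant strata simultaneously while keeping the homotopy rel the base point; both are standard. (Note that only surjectivity is claimed: pushing a $2$-dimensional homotopy off $A$ would need real codimension at least three, which a normal crossing divisor need not satisfy, so injectivity of $i_*$ is neither expected nor needed.)
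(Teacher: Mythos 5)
Your proof is correct, but it takes a genuinely different route from the paper's. The paper writes $\mathcal{Z}_m^H=\mathcal{Z}_m\cup\bigcup_{i=1}^{\infty}D_i$ with countably many simply connected discs covering the complement, sets $\mathcal{Z}_{m,k}=\mathcal{Z}_m\cup\bigcup_{i=1}^{k}D_i$, and inductively applies the Van Kampen theorem to $\mathcal{Z}_{m,k}\cup D_{k+1}$ -- the codimension bound enters only to guarantee that $D_{k+1}\cap\mathcal{Z}_{m,k}$ is path-connected, since its complement in the disc is an analytic set of complex codimension at least one -- and then passes to the direct limit of the resulting surjections $\pi_1(\mathcal{Z}_{m,k})\twoheadrightarrow\pi_1(\mathcal{Z}_{m,k+1})$. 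You instead run the classical general-position argument: smooth a loop, make it transverse to the strata of the closed analytic set $A=\mathcal{Z}_m^H\setminus\mathcal{Z}_m$, and conclude from $\dim S^1=1<2\leq\operatorname{codim}_{\mathbb{R}}$ that the perturbed loop misses $A$, the perturbation being a based homotopy in $\mathcal{Z}_m^H$. Both arguments rest on exactly the same input from Lemma \ref{cidim} (that the complement is an analytic set of complex codimension at least one, hence real codimension at least two). Your version uses smooth approximation and parametric transversality but is arguably more transparent about where codimension two is used and why only surjectivity follows; the paper's version trades that machinery for Van Kampen plus a countable exhaustion and a direct-limit step, and needs only the connectivity of complements of analytic subsets in discs. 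One small remark: you do not actually need the normal crossing structure -- any closed analytic subset of complex codimension at least one admits a locally finite stratification by complex submanifolds of real codimension at least two, so your argument goes through with just the statement of Lemma \ref{cidim}.
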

\begin{proof}First notice that $\mathcal{Z}_m$ and $\mathcal{Z}_m^H$ are both smooth manifolds, and $\mathcal{Z}_m\subseteq \mathcal{Z}_m^H$ is open. Thus for each point $q\in \mathcal{Z}_m^H\setminus \mathcal{Z}_m$ there is a disc $D_q\subseteq \mathcal{Z}_m^H$ with $q\in D_q$. Then the union of these discs
$$\bigcup_{q\in \mathcal{Z}_m^H\setminus \mathcal{Z}_m}D_q$$
forms a manifold with open cover $\{D_q:~q\in \cup_q D_q\}$. Because both $\mathcal{Z}_m$ and $\mathcal{Z}_m^H$ are second countable spaces, there is a countable subcover $\{D_i\}_{i=1}^\infty$ such that
$\mathcal{Z}_m^H=\mathcal{Z}_m\cup\bigcup\limits_{i=1}^\infty D_i,$
where the $D_i$ are open discs in $\mathcal{Z}_m^H$ for each $i$. Therefore, we have $\pi_1(D_i)=0$ for all $i\geq 1$. Letting
$\mathcal{Z}_{m,k}=\mathcal{Z}_m\cup\bigcup\limits_{i=1}^k D_i$, we get
\begin{align*}\pi_1(\mathcal{Z}_{m,k})*\pi_1(D_{k+1})=\pi_1(\mathcal{Z}_{m,k})=\pi_1(\mathcal{Z}_{m,{k-1}}\cup D_k), \quad\text{ for any } k.
\end{align*}

We know that $\text{codim}_{\mathbb{C}}$($\mathcal{Z}_m^H\backslash \mathcal{Z}_m)\geq 1.$
Therefore, since $$D_{k+1}\backslash\mathcal{Z}_{m,k}\subseteq D_{k+1}\backslash \mathcal{Z}_m,$$ we have $\text{codim}_{\mathbb{C}}(D_{k+1}\backslash \mathcal{Z}_{m,k})\geq 1$ for any $k$. As a consequence we conclude that $D_{k+1}\cap \mathcal{Z}_{m,k}$ is path-connected. Hence we apply the Van Kampen Theorem on $X_k=D_{k+1}\cup \mathcal{Z}_{m,k}$ to conclude that for every $k$, the following group homomorphism is surjective:
\[\xymatrix{\pi_1(\mathcal{Z}_{m,k})=\pi_1(\mathcal{Z}_{m,k})*\pi_1(D_{k+1})\ar@{->>}[r]&\pi_1(\mathcal{Z}_{m,k}\cup D_{k+1})=\pi_1(\mathcal{Z}_{m,{k+1}}).}\]
Thus we get the directed system:
\[\xymatrix{\pi_1(\mathcal{Z}_m)\ar@{->>}[r]&\pi_1(\mathcal{Z}_{m,1})\ar@{->>}[r]&\cdots\cdots\ar@{->>}[r]&\pi_1(\mathcal{Z}_{m,k})\ar@{->>}[r]&\cdots\cdots}\]
By taking the direct limit of this directed system, we get the surjectivity of the group homomorphism $\pi_1(\mathcal{Z}_m)\to\pi_1(\mathcal{Z}_m^H)$.
\end{proof}

\vspace{+12 pt}

\noindent Center of Mathematical Sciences, Zhejiang University, Hangzhou, Zhejiang 310027, China;\\
Department of Mathematics, University of California at Los Angeles, Los Angeles, CA 90095-1555, USA\\
\noindent e-mail: liu@math.ucla.edu, kefeng@cms.zju.edu.cn

\vspace{+6pt}
\noindent Center of Mathematical Sciences, Zhejiang University, Hangzhou, Zhejiang 310027, China \\
\noindent e-mail: syliuguang2007@163.com


\begin{thebibliography}{99}











\bibitem{Beau}
A. Beauville,  
\newblock{\em Moduli of cubic surfaces and Hodge theory (after Allcock, Carlson, Toledo)},
\newblock{G\'eom\'etriesa courbure n\'egative ou nulle, groupes discrets et rigidit\'es, S\'eminaires et Congres}, Vol.~\textbf{18} (2009), pp.~446--467.


\bibitem{CKT}
J.~Carlson, A.~Kasparian, and D.~Toledo,
\newblock{\em Variations of Hodge structure of maximal dimension},
\newblock{Duke Journal of Math.} \textbf{58} (1989), pp.~669--694.

\bibitem{CMP}
J.~Carlson, S.~Muller-Stach, and C.~Peters, 
\newblock{\em Period mappings and period domains},
\newblock{Cambridge University Press}, {(2003)}.

\bibitem{CZTG}
E.~Cattani, F.~El Zein, P.~A.~Griffiths, and L.~D.~Trang,
\newblock{\em Hodge theory},
\newblock{Mathematical Notes}, \textbf{49},
\newblock{Princeton University Press,} (2014).

\bibitem{Schmid86}
E.~Cattani, A.~Kaplan, and W.~Schmid,
\newblock {\em Degeneration of {H}odge structures},
\newblock {Ann. of Math.} \textbf{123}(1986), pp.~457--535.

\bibitem{CGL}
X.~Chen, K.~Liu, and Y.~Shen,
\newblock{\em Global Torelli theorem for projective manifolds of Calabi--Yau type}, {\tt arXiv:1205.4207v3}.


\bibitem{For}
F.~Forstneri$\check{c}$,
\newblock{\em Stein manifolds and holomorphic mappings: the homotopy
principle in complex analysis},
\newblock{Ergebnisse der Mathematik und ihrer Grenzgebiete 3. Folge}, \textbf{56},
\newblock{Springer-Verlag, Berlin-Heidelberg}, (2011).

\bibitem{GM}
M.~Goresky and R.~MacPherson,
\newblock{\em Stratified morse theory},
\newblock{Springer-Verlag Berlin Heidelberg}, (1988).

\bibitem{GR}
H.~Grauert and R.~Remmert,
\newblock{\em Coherent analytic sheaves},
\newblock{Grundlehren der mathematischen Wissenschaften}, \textbf{265},
\newblock{Springer-Verlag, Berlin-Heidelberg-NewYork-Tokyo}, (2011).

\bibitem{Green14}
M.~Green, P.~Griffiths, and C.~Robles,
\newblock {\em Extremal degenerations of polarized Hodge structures},
\newblock   {\tt arXiv:1403.0646}.

\bibitem{Griffiths63}
P.~Griffiths,
\newblock{\em On the differential geometry of homogeneous vector bundles},
\newblock{Transactions of the American Mathematical Society} \textbf{109} (1963), no.~1, pp.~1-34.

\bibitem{Griffiths1}
P.~Griffiths,
\newblock{\em Periods of integrals on algebraic manifolds I},
\newblock{Amer. J. Math.} \textbf{90} (1968), pp.~568--626.

\bibitem{Griffiths2}
P.~Griffiths,
\newblock{\em Periods of integrals on algebraic manifolds II},
\newblock{Amer. J. Math.} \textbf{90} (1968), pp.~805--865.


\bibitem{Griffiths3}
P.~Griffiths,
\newblock{\em Periods of integrals on algebraic manifolds III},
\newblock{Publ. Math. IHES.} \textbf{38} (1970), pp.~125--180.

\bibitem{GS}
P.~Griffiths and W.~Schmid,
\newblock{\em Locally homogeneous complex manifolds,}
\newblock{Acta Math.} \textbf{123} (1969), pp.~253--302.

\bibitem{GW}
P.~Griffiths and J.~Wolf,
\newblock{\em Complete maps and differentiable coverings},
\newblock{Michigan Math. J.} \textbf{10} (1963), no.~3, pp.~253--255.

\bibitem{HC}
Harish-Chandra,
\newblock{\em Representation of semisimple Lie groups VI},
\newblock{Amer. J. Math.} \textbf{78} (1956), pp.~564-628.

\bibitem{Hel}
S.~Helgason,
\newblock{\em Differential geometry, Lie groups, and symmetric spaces},
\newblock{Academic Press, New York, (1978)}.

\bibitem{Hom}
L.~H\"ormander,
\newblock{\em An introduction to complex analysis in several variables},
\newblock{Van Nostrand, Princeton, NJ, (1973)}.

\bibitem{JostYang}
J.~Jost and Y.~Yang,
\newblock{\em Heat flow for horizontal harmonic maps into a class of Carnot--Caratheodory spaces},
\newblock{Mathematical Research Letters}, \textbf{12} (2005), no.~4, pp.~513--530.

\bibitem{Usui09}
K.~Kato and S.~Usui,
\newblock {\em Classifying spaces of degenerating polarized Hodge structures}, Volume 169 of {\em Annals of Mathematics Studies},
\newblock Princeton University Press, Princeton, NJ, (2009).

\bibitem{KN}
S.~Kobayashi and K.~Nomizu,
\newblock {\em Foundations of differential geometry I},
\newblock{Wiley-Interscience}, New York-London, (1963).




\bibitem{km1}
K.~Kodaira and J.~Morrow,
\newblock {\em Complex manifolds},
\newblock AMS Chelsea Publishing, Porvindence, RI, (2006), Reprint of the 1971 edition with errata.

\bibitem{Lee}
J.~Lee,
\newblock{\em Introduction to smooth manifolds},
\newblock{Graduate Texts in Mathematics}, \textbf{218},
\newblock{Springer-Verlag, Berlin-Heidelberg-NewYork-Tokyo}, 2nd ed., (2013).

\bibitem{Lu}
Z.~Lu,
\newblock{\em On the geometry of classifying spaces and horizontal slices},
\newblock{\tt arXiv:math/0505579v1}.



\bibitem{Mats}
Y.~Matsushima,
\newblock {\em Affine structure on complex manifold},
\newblock {Osaka J. Math.},
\textbf{5} (1968), pp.~215--222.

\bibitem{Mayer}
R.~Mayer,
\newblock{\em Coupled contact systems and rigidity of maximal variations of Hodge
structure},
\newblock{Trans. AMS,} \textbf{352}, no.~5 (2000), pp.~2121--2144.

\bibitem{Mckay}
B.~Mckay,
\newblock{\em Extension phenomena for holomorphic geometric structures},
\newblock {Symmetry, Integrability and Geometry: Methods and Applications}, \textbf{5} (2009), 058, 45 pages.


\bibitem{Michor}
P.~W.~Michor,
\newblock{\em Topics in differential geometry},
\newblock{Amer. Math. Soc., Providence, RI,} (2008).


\bibitem{Milne}
J.~Milne,
\newblock{\em Shimura varieties and moduli},
\newblock{Handbook of Moduli} \textbf{2} (2011), pp.~467--548.





\bibitem{Mok}
N.~Mok,
\newblock{\em Metric rigidity theorems on Hermitian locally symmetric manifolds},
\newblock{Ser. Pure Math.}, Vol.~\textbf{6}, World Scientific, Singapore-New Jersey-London-HongKong, (1989).


\bibitem{MokYau}
N.~Mok and S.-T. Yau,
\newblock{\em Completeness of the K\"ahler--Einstein metric on bounded domains and the characterization of domain of holomorphy by curvature condition},
\newblock{\em The mathematical heritage of Henri Poincar\'e, Part 1,}
\newblock{Sympo. in Pure Math.} \textbf{39} (1983), pp.~41--60.


\bibitem{Pflaum}
M.~Pflaum,
\newblock {\em Analytic and geometric study of stratified spaces: contributions to analytic and geometric aspects}, 
\newblock {Springer Science and Business Media,} 2001.


\bibitem{Popp}
H.~Popp,
\newblock {\em Moduli theory and classification theory of algebraic varieties},
  Volume 620 of {\em Lecture Notes in Mathematics},
\newblock Springer-Verlag, Berlin-New York, (1977).


\bibitem {schmid1}
W.~Schmid,
\newblock {\em Variation of {H}odge structure: the singularities of the period mapping},
\newblock {Invent. Math.} \textbf{22} (1973), pp.~211--319.

\bibitem {Schwartz}
M.~H.~Schwartz,
\newblock {\em Lectures on stratification of complex analytic sets,}
\newblock {Tata Institute of Fundamental Research} \textbf{38}, (1966).

\bibitem{Serre60}
J.-P.~Serre,
\newblock {\em Rigidit\'{e} du foncteur de Jacobi d’\'{e}chelon n≥3},
\newblock{S\'{e}m. Henri Cartan} \textbf{13}, no.~17 (1960/61), Appendix.

\bibitem{su}
Y.~Shimizu and K.~Ueno,
\newblock {\em Advances in moduli theory},  
\newblock {Translation of Mathematical Monographs} \textbf{206}, American Mathematics Society, Providence, Rhode Island, (2002).

\bibitem{Sommese}
A.~Sommese,
\newblock{\em On the rationality of the period mapping},
\newblock{Annali della Scuola Normale Superiore di Pisa-Classe di Scienze} \textbf{5} (1978), no.~4, pp.~683--717.

\bibitem{Sugi}
M.~Sugiura,
\newblock{\em Conjugate classes of Cartan subalgebra in real semi-simple Lie algebras},
\newblock{J.~Math. Soc. Japan} \textbf{11} (1959), pp.~374--434.

\bibitem{Sugi1}
M.~Sugiura,
\newblock{\em Correction to my paper: Conjugate classes of Cartan subalgebra in real semi-simple Lie algebras},
\newblock{J.~Math. Soc. Japan} \textbf{23} (1971), pp.~374--383.

\bibitem{sz}
B. Szendr\"oi,
\newblock{\em Some finiteness results for Calabi--Yau threefolds},
\newblock{J.~London Math. Soc.} (1999), pp.~689--699.

\bibitem{tian1}
G.~Tian,
\newblock {\em Smoothness of the universal deformation space of compact
  {C}alabi--{Y}au manifolds and its {P}etersson-{W}eil metric}.
\newblock {Mathematical aspects of string theory(San Diego, Calif.)}, (1986),World Sci. Publishing, Singapore, {Adv. Ser. Math. Phys.} \textbf{1} (1987), pp.~629--646.




\bibitem{tod1}
A.~N.~Todorov,
\newblock {\em The {W}eil-{P}etersson geometry of the moduli space of {${\rm
 SU}(n\geq 3)$} ({C}alabi-{Y}au) manifolds. {I}},
\newblock {Comm. Math. Phys.} \textbf{126} (1989), pp.~325--346.







\bibitem{v1}
E.~Viehweg,
\newblock {\em Quasi-projective moduli for polarized manifolds},
\newblock {Ergebnisse der Mathematik und ihrer Grenzgebiete 3. Folge}, \textbf{30}, Springer-Verlag, (1995).

\bibitem{Voisin}
C.~Voisin,
\newblock {\em Hodge theory and complex algebraic geometry I},
\newblock Cambridge Universigy Press, New York, (2002).


\bibitem{Xu}
Y.~Xu,
\newblock{\em Lie groups and Hermitian symmetric spaces},
\newblock{Science Press in China}, (2001). (Chinese)



\end{thebibliography}
\end{document}